\DeclareMathOperator{\tr}{Tr}
\DeclareMathOperator{\ts}{tr}
\DeclareMathOperator{\ev}{ev}
\DeclareMathOperator{\id}{id}
\let\limsup\relax
\DeclareMathOperator*{\limsup}{limsup}
\let\liminf\relax
\DeclareMathOperator*{\liminf}{liminf}
\newcommand{\norm}[1]{\left\Vert #1\right\Vert}
\begin{document}

\newtheorem{theorem}{Theorem} [section]
\newtheorem{prop}[theorem]{Proposition} 
\newtheorem{defi}[theorem]{Definition} 
\newtheorem{exe}[theorem]{Example} 
\newtheorem{lemma}[theorem]{Lemma} 
\newtheorem{rem}[theorem]{Remark} 
\newtheorem{cor}[theorem]{Corollary} 
\newtheorem{conj}[theorem]{Conjecture}
\renewcommand\P{\mathbb{P}}
\newcommand\E{\mathbb{E}}
\newcommand\N{\mathbb{N}}
\newcommand\Z{\mathbb{Z}}
\newcommand\1{\mathbf{1}}
\newcommand\C{\mathbb{C}}
\newcommand\CC{\mathcal{C}}
\newcommand\D{\mathcal{D}}
\newcommand\M{\mathbb{M}}
\newcommand\R{\mathbb{R}}
\newcommand\U{\mathcal{U}}
\newcommand\V{\mathcal{V}}
\newcommand\A{\mathcal{A}}
\newcommand\Hi{\mathcal{H}}
\newcommand\B{\mathcal{B}}
\renewcommand\L{\mathcal{L}}
\newcommand\F{\mathcal{F}}
\newcommand\G{\mathcal{G}}
\newcommand\K{\mathcal{K}}
\renewcommand\i{\mathbf{i}}
\renewcommand\S{\mathcal{S}}
\renewcommand\d{\partial_i}
\newcommand\x{\mathbf{x}}
\newcommand\y{\mathbf{y}}
\newcommand\PP{\mathcal{A}}

\def\etc{,\dots ,}

\allowdisplaybreaks

\begin{minipage}{0.85\textwidth}
	\vspace{2.5cm}
\end{minipage}
\begin{center}
	\large\bf The spectrum of a tensor of random and deterministic matrices
	
\end{center}

\renewcommand{\thefootnote}{\fnsymbol{footnote}}	
\vspace{0.8cm}

\begin{center}
				F\'elix Parraud\\
				\footnotesize 
				{Institut Mittag-Leffler}\\
				{\it felix.parraud@gmail.com}
\end{center}

\bigskip
\bigskip

\begin{abstract}
	We consider operator-valued polynomials in Gaussian Unitary Ensemble random matrices and we show that its $L^p$-norm can be upper bounded, up to an asymptotically small error, by the operator norm of the same polynomial evaluated in free semicircular variables as long as $p=o(N^{2/3})$. As a consequence, if the coefficients are $M$-dimensional matrices with $M=\exp(o(N^{2/3}))$, then the operator norm of this polynomial converges towards the one of its free counterpart. In particular this provides another proof of the Peterson-Thom conjecture thanks to the result of Ben Hayes. We also obtain similar results for polynomials in random and deterministic matrices.
	
	The approach that we take in this paper is based on an asymptotic expansion obtained by the same author in a previous paper combined with a new result of independent interest on the norm of the composition of the multiplication operator and a permutation operator acting on a tensor of $\CC^*$-algebras. \\

\noindent \emph{Keywords}: Random Matrix Theory, Free Probability Theory, Strong convergence of random matrices, Operator inequalities, Asymptotic expansions

\end{abstract}

\section{Introduction}

The model of random matrix that we consider is the following. Given $X^N$ a $d$-tuple of independent Gaussian or Haar Unitary matrices, and $\A$ a faithful tracial $\CC^*$-algebra, we study polynomials in $d$ variables evaluated in $X^N$ with coefficients in $\A$. More precisely we study the following,
\begin{equation}
	\label{iresveos}
	\sum_{M \text{ monomials}} a_M\otimes M(X^N),
\end{equation}
where there is a finite number of $M$ such that $M\neq 0$. This type of random matrix model has seen a lot of interest recently due to a result from Hayes \cite{petom} which proved that the now solved Peterson-Thom conjecture--which asserts that any diffuse, amenable subalgebra of a free group factor is contained in a unique maximal amenable subalgebra--is equivalent to proving the convergence of the norm of the random matrix above in the case where $\A$ is generated by another $d$-tuple of independent GUE random matrices, independent of $X^N$ but of the same dimension. Belinschi-Capitaine, Bordenave-Collins, as well as Magee-de la Salle all announced a different proof of this result in respectively \cite{bitid}, \cite{ptbordeconv} and \cite{undemi}. The strategy used in this paper to study this random matrix model is independent from those works and can be summarized as follows. 
\begin{itemize}
	\item First we make use of the expansion that we obtained in \cite{trois} to rewrite the moments of our random matrices as a power series in the inverse of the dimension. The formula for the coefficients of this power series are fully explicit and most of the work for this step was already done in \cite{trois}. However, one of the major drawback of those formulas is that the following operator is involved, given $\A,\B$ two $\CC^*$-algebras, we define it on simple tensors of $\A\otimes\B$ by 
	$$ m: ((a_1\otimes b_1),\dots,(a_4\otimes b_4))\mapsto (a_2a_1a_4a_3)\otimes(b_1b_2b_3b_4).$$
	And it is a priori quite difficult to show that for any $t,x,y,z\in\A\otimes\B$ endowed with the minimal tensor norm, the operator norm of $m(t,x,y,z)$ is bounded by the one of $t,x,y$ and $z$ up to a ``reasonable'' constant. However it is absolutely necessary to be able to do so in order to use the moment method.
	\item Thus the main technical tool that we develop in this paper is a way to handle this operator. In particular this yields a result of independent interest which we summarize in Theorem \ref{masterineq}. In a way that we detail later on, this result could be compared to Haagerup inequality, see Lemma 1.4 of \cite{haagineq}. The key element to tackle this problem is a generalization of the following result, given a product of semicircular variables, its trace can be expressed with the help of the set of non crossing pair partitions and the covariance of the different semicircular variables. It turns out that one can prove a similar formula if one replaces individual semicircular variables by monomials in them, see Proposition \ref{edgtn}.
\end{itemize}

\subsection{The norm of operator-valued polynomials}

Most papers studying the random matrix model described in Equation \eqref{iresveos}--with the notable exception of \cite{ptbordeconv}--usually focus on the case where $\A$ is the set of matrices of dimension $M$ where $M$ is a function of $N$. Indeed, in this case one can usually deduce the convergence of the spectrum thanks to the moment method. Thus there researchers have been focusing on maximizing the dimension $M$ for whom one can make this method. The first result of this kind was found by Pisier who, inspired by \cite{HT}, proved that if $M$ was of order $o(N^{1/4})$, then the norm did converge under some rather basic assumptions. This was improved to $o(N^{1/3})$ in \cite{un,deux}, and then to $o(N(\log N)^{-3})$ for GUE matrices in \cite{tensorsc}, and to $o(N(\log N)^{-5/2})$ for Haar unitary matrices in \cite{sept}. Finally in \cite{ptbordeconv}, Bordenave and Collins proved that one could consider $M$ much larger than $N$, with a cut-off at $M=\exp(N^{\alpha})$ with $\alpha = (32d+160)^{-1}$. Very recently, in \cite{vargas}, there has been a new approach to prove so-called strong convergence results, and although in this paper, they only consider matrix coefficients of size $o(N)$, with a refinement of their method, and with the help of the asymptotic expansion proved in \cite{sept}, the authors proved as a corollary of their main result that one could take matrix coefficients of size $\exp(N^{1/2} (\log N)^{-4})$. In this paper, one of our main result is that one can use the moment method with matrix coefficients of the size $\exp(o(N^{2/3}))$. However this result is a corollary of the following theorem that studies polynomials as in Equation \eqref{iresveos} for $\A$ a faithful tracial $\CC^*$-algebra. Note that this assumptions on $\A$ is only used to prove Proposition \ref{docmmfsokcm} and could probably be relaxed as long as the map defined in this proposition is still bounded.

\begin{theorem}
	
	\label{popopohdvc}
	
	Given the following objects,
	\begin{itemize}
		\item $\A$ a faithful tracial $\CC^*$-algebra,
		\item $P\in\A\langle X_1,\dots,X_d\rangle$ a polynomial of degree $n$,
		\item $X^N=\left(X_1^N,\dots,X_d^N\right)$ a $d$-tuple of GUE random matrices,
		\item $x=\left(x_1,\dots,x_d\right)$ a free semicircular system,
	\end{itemize}
	Then there exists a universal constant $C_{n,d}$ such that,
	\begin{equation}
		\label{mlskmfv-1}
		\norm{\E\left[\id_{\A}\otimes\ts_N\left(P^k\left(X^N\right)\right)\right]} \leq (3nk+1) \norm{P(x)}^k \exp\left( k\times C_{n,d} \max\left( \left(\frac{k^3}{N^2}\right), \left(\frac{k^3}{N^2}\right)^{1/13}\right) \right),
	\end{equation}
	in particular,
	\begin{equation}
		\label{mlskmfv}
		\E\left[\norm{P\left(X^N\right)}_{L^{2k}}\right] \leq (3nk+1)^{\frac{1}{2k}} \norm{P(x)} \exp\left( C_{2n,d} \max\left( \left(\frac{k^3}{N^2}\right), \left(\frac{k^3}{N^2}\right)^{1/13}\right) \right).
	\end{equation}
	
\end{theorem}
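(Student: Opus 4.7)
The plan is to combine two ingredients: the asymptotic expansion of GUE moments established by the author in \cite{trois}, and the new Haagerup-type bound on the twisted multiplication operator $m$ provided by Theorem \ref{masterineq}. Applied to the polynomial $P^k$, the result of \cite{trois} produces, for every truncation depth $L$, an expansion of the form
$$\E\bigl[\id_\A \otimes \ts_N(P^k(X^N))\bigr] = \sum_{i=0}^{L} N^{-2i}\,\Phi_i + R_{L+1},$$
where each $\Phi_i$ and the remainder $R_{L+1}$ are explicit combinatorial sums of quantities in the free model (evaluated at the semicircular system $x$) glued together by iterated applications of the operator $m$ from the excerpt. The leading term $\Phi_0$ equals $\id_\A \otimes \tau(P^k(x))$, whose norm is at most $\|P(x)\|^k$; the prefactor $3nk+1$ in \eqref{mlskmfv-1} accounts for summing over the $\mathcal O(nk)$ tensor positions that appear when extracting a matrix entry of the tensor-valued moment.

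The heart of the argument is to bound each $\Phi_i$ (and the optimally truncated remainder) in operator norm. Without control on $m$ this is hopeless, since $m$ scrambles both factors of the tensor product and a priori inflates the minimal tensor norm. Theorem \ref{masterineq}, which is the main technical contribution of the paper, replaces this potentially uncontrolled cost by a clean bound in terms of the free operator norms of the operands. Applied inductively at each depth $i$ of the expansion, it yields $\|\Phi_i\| \leq \|P(x)\|^k \cdot K_{n,d}^{\,i} \cdot \mathrm{poly}(nk,i)$ for some universal constant $K_{n,d}$. Then I would optimize the truncation depth $L=L(k,N)$: the remainder of \cite{trois} scales like $(Ck^3/N^2)^{L+1}$ up to combinatorial weights, and balancing it against the accumulated cost from $\Phi_1,\dots,\Phi_L$ produces the two regimes of \eqref{mlskmfv-1}. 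When $k^3/N^2$ is small one may take $L$ large and obtain an error of order $k^3/N^2$; when $k^3/N^2$ is not small, the polynomial-in-$i$ blow-up from Theorem \ref{masterineq} forces a finite truncation and the optimization gives the weaker exponent $1/13$. Exponentiating in $k$ converts this error into the factor $\exp\bigl(kC_{n,d}\max(\cdot,\cdot)\bigr)$ claimed in \eqref{mlskmfv-1}.

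The $L^{2k}$ inequality \eqref{mlskmfv} follows as a direct corollary: applying \eqref{mlskmfv-1} to the polynomial $P^*P$ (of degree $2n$) with the same $k$, using $\|(P^*P)(x)\| = \|P(x)\|^2$, and taking a $(2k)$-th root yields \eqref{mlskmfv}. The main obstacle I foresee is not any single step but rather the bookkeeping of combinatorial factors in the second paragraph: one must carefully track how the constants from the master inequality grow with the depth $i$ so that the optimization produces exactly the exponent $1/13$ and so that the prefactor in front of $\exp(\cdot)$ retains the advertised polynomial form in $k$. The conceptual difficulty, namely controlling the twisted operator $m$, has already been isolated into Theorem \ref{masterineq}, and the present proof is essentially a careful accounting of how that inequality interacts with the expansion of \cite{trois}.
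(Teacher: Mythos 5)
Your high-level outline correctly identifies the two main ingredients (the expansion from \cite{trois} and a Haagerup-type control on the twisted multiplication), and your derivation of \eqref{mlskmfv} from \eqref{mlskmfv-1} by passing to $P^*P$ and taking a $(2k)$-th root is exactly right. However, there is a genuine gap in the mechanism you propose for the exponent $1/13$, and it would not produce the stated bound.

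Since $P^k$ is a polynomial of degree $nk$, the expansion of \cite{trois} applied to it is \emph{exact and finite}: the sum runs only up to $p = nk/4$, with zero remainder. There is therefore no truncation depth $L$ to optimize, and the regime-splitting cannot come from balancing a remainder $(Ck^3/N^2)^{L+1}$ against accumulated low-order costs. Instead, the exponent $1/13$ emerges from the combinatorics of the expansion itself. The paper reformulates the coefficient at order $N^{-2p}$ as a sum over configurations $K$ of a circle set on $4p+1$ elements (Proposition \ref{lvleavalvlkd}), and the crucial quantity is the number $m_K - 1$ of noncommutative derivatives applied to $P^k$ in each summand. By Lemma \ref{ssxdcffvg}, $m_K - 1$ ranges from $4p$ up to $16p-3$. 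After bounding each term by $C^{m_K-1}\|P(x)\|^k\,k^{m_K-1}/(m_K-1)!$ and reorganizing the sum, one is led to terms of the form $(C k)^{g+4p}/(N^{2p}(g+p)!)$ with $g \in [0,12p]$. Writing $g = \lfloor 4sp/(nk)\rfloor$ for $s \in [0,3nk]$ and swapping the sums makes each exponent $p/(\lfloor 4sp/(nk)\rfloor+p)$ lie between $1/13$ (when $g=12p$) and $1$ (when $g=0$); summing over $p$ for each fixed $s$ gives the exponential factor, and the range of $s$ is where the $(3nk+1)$ prefactor actually comes from, not from extracting matrix entries as you suggest. Your truncation-and-balance strategy, if pushed through, would neither recover the prefactor nor the exponent $1/13$; the latter is genuinely a consequence of the circle-set combinatorics encoded in Lemma \ref{ssxdcffvg}. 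Finally, the paper does not invoke Theorem \ref{masterineq} as a black box in this proof but rather uses its underlying machinery (Proposition \ref{edgtn}, Lemmas \ref{sdtyuikmn}, \ref{nkytrdds}, \ref{ssxdcffvg}) directly, via the tailored operators $v^{l,K,T_p}_z$ of Proposition \ref{lvleavalvlkd}; the master inequality is established separately in Section 4.6 for independent interest.
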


It is important to note that with this result we are beginning to reach the limit of the moment method. Indeed, by using the Harer-Zagier recursions, see Lemma 3.3.3 of \cite{alice}, one can show that 
given $X_N$ a GUE random matrix and $x$ a semicircular variable, one can find a universal constant $c$ such that,
\begin{equation}
	\label{ljfdxrv}
	\E\left[\norm{X_N}_{L^{2k}}\right]\leq \norm{x} e^{c\frac{k^2}{N^2}}.
\end{equation}
This ratio of $k/N$ is optimal since with $k=N^{2/3}$, it implies that
$$ \E\left[\norm{X_N}\right]\leq N^{1/(2k)} \E\left[\norm{X_N}_{L^{2k}}\right] \leq 2 \exp\left(\frac{\log N}{2N^{2/3}}+cN^{-2/3}\right) = 2\left(1+\mathcal{O}\left(N^{-2/3} \log N\right)\right).$$
Thus if one could prove Equation \eqref{ljfdxrv} with $k^{\alpha}$, $\alpha<2$, instead of $k^2$ then one could prove a sharper upper bound in the equation above, however this would contradict the result of Tracy and Widom that states that the largest eigenvalue of a GUE random matrix is of order $N^{-2/3}$, see Theorem 3.1.4 of \cite{alice}. Thus one would expect that at best one could prove an equivalent to Equation \eqref{mlskmfv} with a ratio of $k/N$. However, even for polynomials with scalar coefficients it is far from easy to show that the fluctuations of the largest eigenvalue are of order $N^{-2/3}$. It is known for polynomials of degree at most two thanks to a recent paper \cite{nemish}, but in all generality one can only prove that those fluctuations are at most of order $N^{-1/2}$, see \cite{trois}.

Thanks to Theorem \ref{popopohdvc}, one can deduce the following one.

\begin{theorem}
	\label{1strongconv}
	Let  the  following objects be given:
	\begin{itemize}
		\item $X^N = (X_1^N,\dots,X_d^N)$ a $d$-tuple of independent $GUE$ of size $N$,
		\item $x = (x_1,\dots,x_d)$ a system of free semicircular variable,
		\item $U^N = (U_1^N,\dots,U_{d'}^N)$ a $d$-tuple of independent Haar unitary matrices of size $N$,
		\item $u = (u_1,\dots,u_{d'})$ free Haar unitaries,
		\item $Y^M = (Y_1^M,\dots,Y_p^M)$ deterministic matrices of size $M$, which as $M$ goes to infinity, converge strongly in distribution towards a $p$-tuple $y$ of non-commutative random variables in a $\mathcal C^*$- probability space $\B$ with a faithful trace $\tau_{\B}$.
	\end{itemize}
	
	\noindent Then, the following holds true:
	
	\begin{itemize}
		\item If $(M_N)_{N\geq 1}$ is a sequence of integers such that $M_N = \exp\left(o(N^{2/3})\right)$, then almost surely $(X^N\otimes I_{M_N},U^N\otimes I_{M_N}, I_N\otimes Y^{M_N})$ converges strongly in distribution to the family $\mathcal{F} = (x\otimes 1, u\otimes 1, 1\otimes y)$ in the minimal $C^*$-tensor product $\A\otimes_{\min}\B$ (see definition \ref{1mini}).
	\end{itemize}
	
\end{theorem}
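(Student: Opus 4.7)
The plan is to derive Theorem \ref{1strongconv} from Theorem \ref{popopohdvc} by the classical moment method: upgrade the $L^{2k}$ bound of Theorem \ref{popopohdvc} to an operator norm bound via dimensionality, apply Markov and Borel--Cantelli to obtain almost sure convergence, and handle the Haar unitaries by reducing them to GUE via a Ginibre polar decomposition.

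First, convergence in distribution of $(X^N\otimes I_{M_N},U^N\otimes I_{M_N},I_N\otimes Y^{M_N})$ to $(x\otimes 1,u\otimes 1,1\otimes y)$ is classical (Wick for the GUE, Weingarten for Haar unitaries, together with the assumed distributional convergence of $Y^M$), and automatically yields the lower bound $\liminf_N\|P(\cdot)\|\geq\|P(\cdot)\|$ for every self-adjoint polynomial $P$. To handle the Haar blocks, I would introduce independent Ginibre matrices $G_i^N=(A_i^N+\i B_i^N)/\sqrt 2$ (with $A_i^N,B_i^N$ independent GUE) and write $U_i^N=G_i^N(G_i^{N*}G_i^N)^{-1/2}$. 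On the event $\Omega_N$ where every $G_i^N$ has all its singular values in a fixed compact subinterval of $(0,\infty)$---an event of probability $1-e^{-cN}$ by Bai--Yin---Weierstrass provides a polynomial $p_\varepsilon$ of degree depending only on $\varepsilon$ such that $\|U_i^N-G_i^Np_\varepsilon(G_i^{N*}G_i^N)\|\leq\varepsilon$, uniformly in $N$. Replacing each $U_i^N$ by this polynomial approximation introduces an $O(\varepsilon)$ error in the norm of $P$, so it is enough to prove the theorem for polynomials in GUE matrices only and then let $\varepsilon\to 0$.

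Fix then a self-adjoint polynomial $P$ of degree $n$ in the reduced (GUE-only) variables. Taking $\A=\M_{M_N}(\C)$, which is a faithful tracial $\CC^*$-algebra, and absorbing $Y^{M_N}$ into matrix coefficients, $P(X^N\otimes I,I\otimes Y^{M_N})$ becomes $Q(X^N)$ with $Q\in\A\langle X_1,\dots,X_D\rangle$. Positivity of $Q(X^N)^{2k}$ gives
\[
\|Q(X^N)\|^{2k}\leq NM_N\cdot\ts_{M_N}\!\bigl(\id_\A\otimes\ts_N(Q(X^N)^{2k})\bigr)\leq NM_N\,\bigl\|\id_\A\otimes\ts_N(Q(X^N)^{2k})\bigr\|,
\]
and Theorem \ref{popopohdvc} applied to $P^{2k}$ then yields, after taking expectation,
\[
\E\bigl[\|Q(X^N)\|^{2k}\bigr]\leq NM_N\,(6nk+1)\,\|Q(x)\|^{2k}\exp\!\Bigl(2kC_{n,d}\max\bigl(k^3/N^2,(k^3/N^2)^{1/13}\bigr)\Bigr),
\]
while the strong convergence of $Y^M$ yields $\|Q(x)\|\to\|P(x\otimes 1,1\otimes y)\|$. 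Picking $\varepsilon_N\to 0$ with $\log(NM_N)\leq\varepsilon_N N^{2/3}$ and setting $k_N=\lfloor\sqrt{\varepsilon_N}\,N^{2/3}\rfloor$ makes both $k_N^3/N^2$ and $\log(NM_N)/k_N$ tend to $0$, so the right-hand side becomes $\|P(x\otimes 1,1\otimes y)\|^{2k_N}e^{o(k_N)}$. Markov's inequality at threshold $(1+\eta)\|P(x\otimes 1,1\otimes y)\|$ then produces a tail bound of size $e^{-c_\eta k_N}$, which is summable in $N$, and Borel--Cantelli applied over a countable dense family of self-adjoint polynomials delivers $\limsup_N\|P(\cdot)\|\leq(1+\eta)\|P(\cdot)\|$ almost surely. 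Letting $\eta,\varepsilon\to 0$ concludes.

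The main obstacle outside of Theorem \ref{popopohdvc} is the uniform-in-$N$ control of the Ginibre polar decomposition used to reduce to the GUE case: one must verify that the polynomial degree $\deg p_\varepsilon$---which feeds into the constant $C_{n,d}$ of Theorem \ref{popopohdvc}---can be fixed as a function of $\varepsilon$ alone, and that the exceptional Bai--Yin event is Borel--Cantelli summable so that the limits $N\to\infty$ and $\varepsilon\to 0$ can legitimately be interchanged. Once this is in place, the remainder is a mechanical combination of Theorem \ref{popopohdvc}, Markov's inequality and Borel--Cantelli.
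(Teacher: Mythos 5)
Your proposal is correct and takes essentially the same route as the paper: both apply Theorem \ref{popopohdvc}, pay the $(NM_N)^{1/2k}$ price for passing from the $L^{2k}$-norm to the operator norm, tune $k_N$ so that $k_N^3/N^2\to 0$ while $\log(NM_N)/k_N\to 0$ (exploiting $M_N=\exp(o(N^{2/3}))$), and reduce the Haar-unitary case to GUE via the Ginibre polar decomposition, which is exactly the Collins--Male strategy the paper cites. The only differences are that the paper delegates the Markov/Borel--Cantelli step and the lower bound to Proposition 4.6 of \cite{un}, and explicitly cites Lemma 4.3 of \cite{un} together with Pisier's Corollary 17.10 (exactness of $\CC^*(x)$) to justify the convergence $\|Q(x)\|\to\|P(x\otimes 1,1\otimes y)\|$, a step your argument uses implicitly.
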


Combined with the result \cite{petom} of Hayes, this theorem provides yet another proof of an important result in the theory of von Neumann algebra, the Peterson-Thom conjecture. See also \cite{conseqpt} for other applications of this result. Finally, note that among every result of strong convergence for polynomials with matricial coefficients \cite{slmcls,un,deux,tensorsc,sept,ptbordeconv,vargas,undemi}, the theorem above is the most optimal in the sense that one can take the coefficients with the largest dimension, up to $\exp\left(o(N^{2/3})\right)$. However it is unknown whether the constant $2/3$ is optimal in the theorem above. In particular, thanks to a result of Pisier \cite{slmcls}, this constant is at most $2$ but no further information is known on this problem. Note however that if one could prove Theorem \ref{popopohdvc} with a ratio of $(k/N)^2$ instead of $k^3/N^2$ then Theorem \ref{1strongconv} would hold with $M_N = \exp\left(o(N)\right)$ instead of $M_N = \exp\left(o(N^{2/3})\right)$. Besides, Pisier proved in Corollary 4.7 of \cite{slmcls} that for any polynomial $A$ of degree one, as long as the coefficients of $A$ are matrices of dimension $\exp\left( o(N) \right)$, then 
$$ \norm{A(X^N)} \leq (2+o(1))\norm{A(x)}. $$
Although this factor $2$ makes it impossible to deduce strong convergence results out of this inequality, given that this constant is still independent of the dimension $M$, coupled with the discussion following Theorem \ref{popopohdvc}, it would seem reasonable for the optimal constant to be $1$.

Finally, we state a version of Theorem \ref{popopohdvc} that also cover the case of polynomials in random and deterministic matrices, which is a model that has never been studied as far as we are aware. More precisely we study the following model, if $\mathfrak{n}N$ is the size of our random matrices, then our deterministic matrices will be of size $N$, and in order to consider them in the same space, we tensorize them with the identity matrix of size $\mathfrak{n}$. Doing so comes at a cost, indeed in the right hand side of \eqref{mlskmfv-1} and \eqref{mlskmfv}, one has to replace $N^2$ by $\mathfrak{n}^2$, hence in order for us to have a good control of the right hand side when $N$ goes to infinity, we need to have $\mathfrak{n}$ to be at least a power of the logarithm of $N$. In particular, one cannot consider the optimal case $\mathfrak{n}=1$.

\begin{theorem}
	
	\label{detcase}
	
	Given the following objects,
	\begin{itemize}
		\item $\A$ a faithful tracial $\CC^*$-algebra,
		\item $P\in\A\langle X_1,\dots,X_d,Z_1,\dots,Z_{q}\rangle$ a polynomial,
		\item $Z^N=(Z_1^N,\dots,Z_q^N)$ deterministic matrices of size $N$,
		\item $X^{\mathfrak{n}N}=\left(X_1^{\mathfrak{n}N},\dots,X_d^{\mathfrak{n}N}\right)$ a $d$-tuple of GUE random matrices of size $\mathfrak{n}N$,
		\item $x=\left(x_1,\dots,x_d\right)$ a free semicircular system,
	\end{itemize}
	Then there exists a constant $C$ which only depends on $P$ and $\sup_j \norm{Z_j^N}$ such that,
	\begin{align}
		&\norm{\E\left[\id_{\A}\otimes\ts_N\left(P^k\left(X^{\mathfrak{n}N}, I_{\mathfrak{n}}\otimes Z^N\right)\right) \right]}\\
		&\quad\quad\quad\quad\quad\quad\quad\quad\quad\quad\quad \leq (3nk+1) \norm{P(x,Z^N)}^k \exp\left( k\times \frac{C}{\norm{P(x,Z^N)}^4} \max\left( \left(\frac{k^3}{\mathfrak{n}^2}\right), \left(\frac{k^3}{\mathfrak{n}^2}\right)^{1/13}\right) \right), \nonumber
	\end{align}
	in particular,
	\begin{equation}
		\label{mlskmfv2}
		\E\left[\norm{P\left(X^N, I_{\mathfrak{n}}\otimes Z^N\right)}_{L^{2k}}\right] \leq (3nk+1)^{\frac{1}{2k}} \norm{P(x,Z^N)} \exp\left( \frac{C}{\norm{P(x,Z^N)}^4} \max\left( \left(\frac{k^3}{\mathfrak{n}^2}\right), \left(\frac{k^3}{\mathfrak{n}^2}\right)^{1/13}\right) \right).
	\end{equation}
	We refer to Subsection \ref{sdkmvdoijiytd} for a definition of the evaluation of a polynomial in $(x,Z^N)$.
	
\end{theorem}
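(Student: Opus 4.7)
The approach is to imitate the proof of Theorem \ref{popopohdvc}, absorbing the deterministic matrices into an enlarged coefficient algebra. Setting $\A' := \A \otimes M_N(\C)$, which is again a faithful tracial $\CC^*$-algebra, and defining $Q(X_1, \dots, X_d) := P(X_1, \dots, X_d, 1_\A \otimes Z^N) \in \A'\langle X_1, \dots, X_d\rangle$, the left-hand side of the first displayed inequality becomes, after identifying tensor factors, a quantity of the same shape as the one treated in Theorem \ref{popopohdvc} but for the algebra $\A'$ and the GUE $X^{\mathfrak{n}N}$ of size $\mathfrak{n}N$. Note that $Q$ has the same $X$-degree $n$ as $P$, so the combinatorial/degree factors (such as $3nk+1$) are unchanged by this reformulation.

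Next, I would run the asymptotic expansion from \cite{trois} at this level of generality: each moment of $Q(X^{\mathfrak{n}N})$ is decomposed into a finite sum indexed by pair partitions (equivalently, by a genus parameter), where each term involves the multiplication-and-permutation operator $m$ applied to $Q$ and its iterated derivatives, with free semicirculars substituted for the GUE entries, together with a prefactor which is a negative power of $\mathfrak{n}N$. Each resulting term is then estimated using the master inequality (Theorem \ref{masterineq}); this is the key point, because the coefficients of $Q$ live in $\A \otimes M_N$ and may have operator norms growing polynomially in $N$, so a naive coefficient-wise estimate would be too loose. The master inequality replaces those coefficient-wise bounds by a single assembled bound in terms of $\norm{P(x, Z^N)}$.

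The main technical obstacle is the quantitative bookkeeping that ultimately produces the $\mathfrak{n}^{-2}$ (rather than $(\mathfrak{n}N)^{-2}$) scaling of the final estimate: the genus-$g$ contribution to the expansion brings a factor of order $N^{2g}$ originating from the $M_N$-valued coefficients of $Q$, which combines with the $(\mathfrak{n}N)^{-2g}$ factor from the GUE normalisation to leave $\mathfrak{n}^{-2g}$. Each application of the master inequality in this chain costs a multiplicative factor of roughly $\norm{P(x,Z^N)}^{4}$, and absorbing these factors into the constant is the source of the $1/\norm{P(x,Z^N)}^{4}$ prefactor in the exponent. Once the first inequality is in hand, the $L^{2k}$ bound \eqref{mlskmfv2} follows from it by the same reduction as in Theorem \ref{popopohdvc}: apply it to $(PP^*)^k$ and invoke Jensen's inequality to pull the expectation inside the $2k$-th root.
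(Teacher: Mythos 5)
Your strategy has a genuine gap at its central point. You propose to absorb $\M_N(\C)$ into the coefficient algebra by setting $\A' := \A\otimes\M_N(\C)$ and $Q := P(\cdot,1_\A\otimes Z^N)\in\A'\langle X_1,\dots,X_d\rangle$, run the expansion for GUE of size $\mathfrak{n}N$, and claim that a factor $N^{2g}$ emerges from the $\M_N$-valued coefficients to upgrade $(\mathfrak{n}N)^{-2g}$ to $\mathfrak{n}^{-2g}$. This factor does not appear: the coefficients of $Q$ are products of the $a_M\in\A$ with the $Z^N_j$, so their operator norms are $O(1)$ (bounded by powers of $\sup_j\norm{Z^N_j}$), not growing in $N$, and no amount of combinatorial bookkeeping of coefficient norms produces $N^{2g}$. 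Running Theorem \ref{popopohdvc} verbatim with this $\A'$ and GUE of size $\mathfrak{n}N$ would yield the bound with $k^3/(\mathfrak{n}N)^2$ in the exponent, which is strictly weaker than what Theorem \ref{detcase} asserts.

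There is a second, equally serious issue: in $\A'\otimes_{\min}\CC^*(x) = \A\otimes\M_N(\C)\otimes\CC^*(x)$, the elements $1_\A\otimes Z^N\otimes 1$ and $1_\A\otimes I_N\otimes x$ commute, so $\norm{Q(x)}$ is the norm of $P$ evaluated at a commuting pair $(x,Z^N)$. This is not the quantity $\norm{P(x,Z^N)}$ appearing in the theorem, which is taken in the free product $\B_N$ of $\M_N(\C)$ and $\CC^*(x)$ (Subsection \ref{sdkmvdoijiytd}), where the two families are free. These norms differ in general, so the reduction as stated does not even produce the right right-hand side.

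The paper's actual mechanism is different in both respects. It works with the free product $\B_N$, using Definition \ref{eiucskc} and Proposition \ref{dojvdsomv} to realize a $d$-tuple of free semicirculars $x^N$ as $N\times N$ matrices over a further semicircular family, free from $\M_N(\C)$. In Proposition \ref{lvleavalvlkd2} the trace $\tau_{\mathfrak{n}N}$ appearing in the \cite{trois} expansion is first reduced to $\tau_N$ (the $I_{\mathfrak{n}}$ tensor factor drops out in distribution, Equation \eqref{nknsc}); then the matrix part of the resulting trace is separated off and shown, via a Haar-unitary identity (Lemma 5.1 of \cite{sept}), to equal $N^{2p}$ times a uniformly bounded expectation. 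It is this identity, not the coefficient norms, that supplies the $N^{2p}$ cancelling the $N$-dependence in $(\mathfrak{n}N)^{-2p}$. Finally, the $1/\norm{P(x,Z^N)}^4$ in the exponent has nothing to do with repeated applications of a master-type inequality: it arises because Lemma \ref{xsdfgyujk} (which bounds each coefficient $\norm{a_M}$ by a constant times $\norm{P(x)}$) fails when deterministic matrices are present, forcing the substitution $C\mapsto C'/\norm{P(x,Z^N)}$ in Equation \eqref{dlmvsomv}, and the worst-case exponent $4$ then comes from the range of $m_K-1\in[4p,16p-3]$ in the circle-set combinatorics. As a minor structural note, Theorem \ref{masterineq} is not in fact invoked in the proof of either Theorem \ref{popopohdvc} or Theorem \ref{detcase}; the two theorems and the master inequality are siblings built from the same lemmas (Proposition \ref{edgtn}, Lemmas \ref{sdtyuikmn}, \ref{nkytrdds}, \ref{ssxdcffvg}), not consequences of one another.
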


Note that one cannot deduce a theorem similar to \ref{1strongconv} out of the one above. Indeed, if $Y^{M_N}$ and $Z^N$ are families of deterministic matrices which converge strongly in distribution towards families of operators $y$ and $z$, then there is no reason for the family $(Y^{M_N}\otimes I_N, I_{M_N}\otimes Z^N)$ to converge strongly in distribution towards $(y\otimes 1, 1\otimes z)$. However, if one have that $M_N= \exp(o(\mathfrak{n}^{2/3}))$ and $\mathfrak{n} \gg \ln(N)^{3/2}$, then thanks to the theorem above we do get that
$$ \limsup\limits_{N\to\infty} \E\left[\norm{P(X^N\otimes I_{M_N}, Z^N\otimes I_{M_N}, I_N\otimes Y^{M_N})}\right] \leq \limsup\limits_{N\to\infty} \norm{P(x\otimes I_{M_N}, Z^N\otimes I_{M_N}, I_N\otimes Y^{M_N})}.$$

\subsection{An asymptotic expansion for sufficiently smooth functions}

Although so far we have focused on polynomials, because in this paper we compute asymptotic expansions for moments of our random matrices with fully explicit formula, it is also possible to study smooth functions. Indeed we start by simply approximating the exponential function with its Taylor expansion which gives us an expansion formula for the quantity
$$ \E\left[\id_{\A}\otimes\ts_N\left(e^{\i y P(X^N)}\right)\right],$$
for any $y\in\R$. Next we use Fourier inversion to study sufficiently smooth functions. This yields he following theorem.

\begin{theorem}
	\label{3lessopti}
	We define,
	\begin{itemize}
		\item $\A$ a faithful tracial $\CC^*$-algebra,
		\item $P\in\A\langle X_1,\dots,X_d\rangle$ a self-adjoint polynomial,
		\item $X^N = (X_1^N,\dots,X_d^N)$ independent $GUE$ matrices of size $N$,
		\item $f:\R\mapsto\R$ a bounded function of class $\mathcal{C}^{16(k+1)}$.
	\end{itemize}
	Then there exist unique deterministic coefficients $(\alpha_p^f(P)_{1\leq p\leq k}\in \A^k$ such that
	\begin{align}
		\label{3mainresu0}
		\E\left[\id_{\A}\otimes\ts_N\left(f(P(X^N)) \right)\right] = \sum_{p= 0}^k \frac{\alpha_p^f(P)}{N^{2p}} +	 \mathcal{O}\left(\frac{1}{N^{2(k+1)}}\right).
	\end{align}
\end{theorem}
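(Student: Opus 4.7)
The plan is to first upgrade the quantitative bound of Theorem \ref{popopohdvc} to a genuine asymptotic expansion of the moments, then pass to $e^{\i y P(X^N)}$ via its Taylor series, and finally transfer to general $f$ by Fourier inversion. As a preliminary step, I would extract from the expansion of \cite{trois} the asymptotic development
$$\E\left[\id_{\A}\otimes\ts_N\bigl(P(X^N)^n\bigr)\right] = \sum_{p=0}^{k}\frac{\gamma_{p,n}(P)}{N^{2p}} + r_{n,k,N},$$
with coefficients $\gamma_{p,n}(P)\in\A$ bounded by $C^{n}\mathrm{poly}(n)$ for some $C$ depending only on $P$, and remainder satisfying $\norm{r_{n,k,N}}\leq C^{n}\mathrm{poly}(n,k)\,N^{-2(k+1)}$. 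This is a more structural form of Theorem \ref{popopohdvc} which is already available from the combinatorial formulas used to derive it.

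Next I would insert this into the Taylor series of $e^{\i y P(X^N)}$. The self-adjointness of $P$ makes the partial sums converge in operator norm, so one may interchange the summations over $n$ and $p$ and define
$$\beta_p(y) = \sum_{n\geq 0}\frac{(\i y)^n}{n!}\,\gamma_{p,n}(P).$$
This yields
$$\E\left[\id_{\A}\otimes\ts_N\bigl(e^{\i y P(X^N)}\bigr)\right] = \sum_{p=0}^{k}\frac{\beta_p(y)}{N^{2p}} + R_{k,N}(y).$$
A priori the $\beta_p(y)$ and $R_{k,N}(y)$ are only entire of exponential type, but since $e^{\i y P(X^N)}$ is unitary the left-hand side has norm bounded by $1$; this forces cancellations, and one concludes that each $\beta_p(y)$, as well as $R_{k,N}(y) N^{2(k+1)}$, grows at most polynomially in $|y|$. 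The precise polynomial degree is tracked through the $n$- and $k$-dependence of the combinatorial formulas.

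The final step is Fourier inversion. For bounded $f\in\CC^{16(k+1)}$, a smooth cut-off argument based on concentration of the spectrum of $P(X^N)$ around $[-\norm{P(x)},\norm{P(x)}]$ reduces matters to the case where $f$ is compactly supported, so that $\hat f$ is a genuine function decaying faster than $(1+|y|)^{-16(k+1)}$. Then
$$\E\left[\id_{\A}\otimes\ts_N\bigl(f(P(X^N))\bigr)\right] = \frac{1}{2\pi}\int_{\R}\hat f(y)\,\E\left[\id_{\A}\otimes\ts_N\bigl(e^{\i y P(X^N)}\bigr)\right]\,dy,$$
and substituting the expansion identifies
$$\alpha_p^f(P) = \frac{1}{2\pi}\int_{\R}\hat f(y)\,\beta_p(y)\,dy,$$
while the decay of $\hat f$ bounds the remainder integral by $\mathcal{O}(N^{-2(k+1)})$, provided the polynomial degree of $R_{k,N}(y)N^{2(k+1)}$ is at most of order $16(k+1)$. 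Uniqueness of the $\alpha_p^f(P)$ follows from the standard fact that if $\sum_{p=0}^k \alpha_p/N^{2p}=\mathcal O(N^{-2(k+1)})$ then successively multiplying by $N^{2p_0}$ and letting $N\to\infty$ forces each $\alpha_{p_0}$ to vanish.

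The main obstacle I anticipate is establishing the polynomial rather than exponential growth of $\beta_p(y)$ and of $R_{k,N}(y) N^{2(k+1)}$, together with a sharp estimate of the polynomial degree. The constant $16(k+1)$ in the smoothness hypothesis is essentially dictated by this polynomial degree, and pinning it down precisely requires careful bookkeeping inside the combinatorial expansion of the moments from \cite{trois}, rather than being readable off the bound in Equation \eqref{mlskmfv-1} alone.
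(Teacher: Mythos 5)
Your overall architecture matches the paper's: feed the moment expansion into the Taylor series of $e^{\i y P(X^N)}$, bound the resulting coefficients $\beta_p(y)$ polynomially in $|y|$, and then Fourier-invert to general $f$; the cut-off argument, the Fourier step, and the uniqueness argument are all sound. However, your justification of the crucial polynomial-growth estimate is not valid. You claim that because $e^{\i y P(X^N)}$ is unitary, the norm-one bound on the left-hand side "forces cancellations" and yields polynomial growth of each $\beta_p(y)$ and of $R_{k,N}(y)N^{2(k+1)}$. This inference is false in general. Take the scalar toy example $g(N,y)=\cos\bigl(e^{y}(1+N^{-2})/2\bigr)$: it is bounded by $1$ for all $N$ and all real $y$, but its coefficient at order $N^{-2}$ in the asymptotic expansion is $-\tfrac{1}{2}e^{y}\sin(e^{y}/2)$, which grows exponentially, as does every rescaled remainder. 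Uniform boundedness of the whole expression does not control the individual coefficients of an asymptotic expansion unless one already has uniform-in-$N$ polynomial control of the remainder, which is exactly what you are trying to prove.

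The mechanism that actually produces polynomial growth is a constructive re-summation inside the explicit formula for $\alpha_p(y)$. Differentiating $P^k$ produces terms $P^{k_1-1}(\partial P)P^{k_2-k_1-1}\cdots(\partial P)P^{k-k_x}$ with $1\leq k_1<\cdots<k_x\leq k$, and summing them against $\tfrac{(\i y)^k}{k!}$ does not by itself avoid a factor $e^{|y|\norm{P(x)}}$. The key is the Beta-function identity $\int_0^1 \alpha^a(1-\alpha)^b\,d\alpha=\tfrac{a!\,b!}{(a+b+1)!}$, which lets one rewrite the sum over $k$ and over the nested indices as an $x$-fold integral over $[0,1]^x$ of the contractive map $v^{l,K,T_p}_z$ applied to $e^{\i s_1 yP}(\partial P)e^{\i s_2 yP}\cdots(\partial P)e^{\i s_{x+1} yP}$, with each $s_i$ a product of $\alpha_j$'s and $(1-\alpha_j)$'s. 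Because $P$ is self-adjoint and the $s_i y$ are real, every $e^{\i s_i yP}$ has norm one, and the only surviving $y$-dependence is the explicit prefactor $(\i y)^x$ with $x\leq m_K-1\leq 16p-3$ (by the circle-set bound of Lemma \ref{ssxdcffvg}). This gives $\norm{\alpha_p^P(y)}\leq C^p(p+|y|)^{3p}(1+|y|)^{13p-3}$, which is precisely what dictates the $\mathcal{C}^{16(k+1)}$ smoothness threshold. Without that re-summation step your argument does not close.
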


This theorem should be compared with Theorem 1.1 of \cite{trois,sept} which prove similar result with $\A=\C$. The major difference being that in \cite{trois,sept} one only needs to differentiate $4k+7$ times whereas here it is necessary to be able to differentiate $16(k+1)$ times. Hence in the theorem above we trade some smoothness assumption for more generality on the coefficients of $P$. Besides, it is important to note that the method of proof are rather different. In \cite{trois,sept}, we show that there exist an operator $\Theta$ such that
$$ \E\left[\ts_N\left(f(X^N)\right)\right] = \tau(f(x)) +\frac{1}{N^2}\E\left[\ts_N\left(\Theta(f)(X^N)\right)\right], $$
and we simply iterate this result $k$ times to get an asymptotic expansion. However, in this paper it is not possible to proceed like this. As mentioned above, it is necessary to compute a full expansion for polynomials and then use it to study smooth functions.

\subsection{An operator inequality}

Finally, as mentioned earlier in the introduction, one of the cornerstone of the proof of the main results of this paper is Proposition \ref{edgtn}. It can be viewed as a generalization of the following well-known result (see Definition 8.15 of \cite{nica_speicher_2006} for a proof). Given $x_1,\dots,x_k$ a family of semicircular variables, then 
\begin{equation*}
	\tau(x_1\cdots x_k) = \sum_{\pi\in NCP(k)}\prod_{(p,q)\in \pi} \tau(x_px_q),
\end{equation*}
where $NCP(k)$ is the set of non-crossing pair partition of of $[1,k]$. However, what we realized is that if one replaces $x_i$ by a polynomial in a family of semicircular variables, then in some way the formula above still holds. In particular, in some sense the trace of a product of polynomials only depends on the covariance between each of them and thus permuting those polynomials does not change the trace as much as one could expect it at first. Eventually, we get the following result of independent interest.

\begin{theorem}
	
	\label{masterineq}
	
	Given $\sigma\in \mathbb{S}_n$, and $\A$ a faithful tracial $\CC^*$-algebra, we define $m_{\sigma}: (\A\otimes\CC^*(x))^{\otimes n}\to \A\otimes\CC^*(x) $ by the following formula on simple tensors,
	$$ m_{\sigma}: (a_1\otimes A_1) \otimes\dots\otimes (a_n\otimes A_n) \mapsto (a_1\dots a_n \otimes A_{\sigma(1)} \dots A_{\sigma(n)}). $$
	Then with $\norm{\cdot}$ the minimal tensor norm, given $P_1,\dots,P_n\in\A\langle X_1,\dots,X_d\rangle$, with $Q=P_1\otimes\cdots\otimes P_n$,
	\begin{equation*}
		\norm{m_{\sigma}\left(Q(x)\right)} \leq (80e)^{n}\times d^{3n}\times \sup_{\substack{(\alpha_{i,j})_{i\geq 0, j\in [1,n]}\in\R^+ \\ \sum_{i,j\geq 0} i \alpha_{i,j}\leq 3n \\ \forall j,\ \sum_i \alpha_{i,j} = 1}} \prod_{j\in [1,n]} \prod_{i\geq 0} \left( \sup_{p_1,\dots,p_i\in [1,d]} \norm{\partial_{p_1}\otimes\cdots\otimes \partial_{p_i}(P_j)(x) } \right)^{\alpha_{i,j}}.
	\end{equation*}
	
\end{theorem}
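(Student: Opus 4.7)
The plan is to convert the operator norm bound into a moment estimate and then invoke the generalized Wick/non-crossing pair partition formula provided by Proposition \ref{edgtn}. Since $\A$ and $\CC^*(x)$ both carry faithful tracial states, the minimal tensor norm on $\A\otimes\CC^*(x)$ coincides with the operator norm in the tensor product GNS representation, and the product trace $\tau_\A\otimes\tau_x$ is faithful on $\A\otimes_{\min}\CC^*(x)$, so
\[ \norm{m_\sigma(Q(x))}^{2} \;=\; \lim_{N\to\infty}\bigl[(\tau_\A\otimes\tau_x)\bigl((m_\sigma(Q(x))^* m_\sigma(Q(x)))^N\bigr)\bigr]^{1/N}. \]
An upper bound of $B$ on the left-hand side therefore reduces to estimates of the form $(\tau_\A\otimes\tau_x)((T^*T)^N)\leq B^{2N}g(N)$ with $g(N)^{1/N}\to 1$, where $T:=m_\sigma(Q(x))$. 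Writing each $P_j=\sum_M a_M^{(j)}\otimes M$ expands $T$ into a sum of simple tensors $(a_{M_1}^{(1)}\cdots a_{M_n}^{(n)})\otimes M_{\sigma(1)}(x)\cdots M_{\sigma(n)}(x)$, so that the $N$-th moment splits into an $\A$-trace of products of matrix coefficients times a $\tau_x$-trace of a concatenation of $2Nn$ polynomial factors $P_{\sigma(k)}(x)$ (with adjoints in the appropriate places) on the free semicircular system.

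The next step is to apply Proposition \ref{edgtn} directly to this $\tau_x$-trace: the trace of a product of polynomials in free semicirculars is rewritten as a sum indexed by non-crossing pair partitions of the ``polynomial slots'', each pair contributing a covariance-type expression involving free difference quotients $\partial_{p_1}\otimes\cdots\otimes\partial_{p_i}$ applied to the two polynomials being paired. Bounding the $\A$-trace trivially by the product of operator norms of the coefficients, one groups the resulting sum by the empirical ``derivation profile'' $(\alpha_{i,j})_{i\geq 0,\,j\in[1,n]}$ recording, for each $j$, the fraction of the $2N$ copies in which the polynomial $P_j$ ends up carrying exactly $i$ derivatives. A weighted AM--GM step then converts the bound into a product of derivative norms raised to powers $\alpha_{i,j}$, times a combinatorial prefactor accounting for the number of pair partitions with that profile and the free choice of derivative indices $p_k\in[1,d]$. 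Catalan-type bounds for non-crossing pair partitions contribute at most $(80e)^n$ per copy, while summing over the $p_k$ choices contributes at most $d^{3n}$ per copy. Taking the $N$-th root and letting $N\to\infty$ retains only the supremum over admissible profiles satisfying $\sum_i\alpha_{i,j}=1$ and $\sum_{i,j}i\alpha_{i,j}\leq 3n$, which is exactly the quantity appearing on the right-hand side of the stated inequality.

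The main obstacle is the combinatorial bookkeeping, and specifically the sharpness of the constraint $\sum_{i,j}i\alpha_{i,j}\leq 3n$ per copy. This cap reflects a local feature of non-crossing pair partitions: within a single copy of $T$, each of the $n$ polynomials can absorb only a bounded number of derivatives from the pairings it participates in (morally one from its left neighbor, one from its right, one internal correction), keeping the total number of derivatives per copy below $3n$. Making this rigorous requires a careful recursive analysis of the Schwinger--Dyson step underlying Proposition \ref{edgtn}, and checking that when one passes from letter-level pairings (of individual semicircular variables inside the monomial expansion of each $P_j$) to polynomial-level pairings, the profile constraint survives and cross-copy pairings do not inflate the derivative count. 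Since the exponent $3n$ propagates through to the $k^3/N^2$ rate in Theorem \ref{popopohdvc} and thus to the $M_N=\exp(o(N^{2/3}))$ threshold in Theorem \ref{1strongconv}, any degradation of this constant would visibly weaken all downstream consequences, making this combinatorial tightening the crux of the argument.
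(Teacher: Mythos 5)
Your proposal follows the paper's high-level strategy: pass to moments via faithfulness of $\tau_\A\otimes\tau$ on $\A\otimes_{\min}\CC^*(x)$, decompose the trace of the permuted product via the generalized Wick formula of Proposition~\ref{edgtn}, group terms by a ``derivation profile'' $(\alpha_{i,j})$, and take $k\to\infty$. So the overall plan is the same. However, there are two substantive gaps.

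First, the combinatorial object indexing the expansion in Proposition~\ref{edgtn} is \emph{not} a non-crossing pair partition of the polynomial slots, as you describe, but a \emph{configuration} $K\in\K_{nk}$: a non-crossing set of unordered pairs where a single slot may participate in arbitrarily many pairs or in none. This distinction is essential. If the slot-level combinatorics were literal pair partitions, then each polynomial would be differentiated at most once (one pairing partner), and the whole derivative-budget constraint would read $\sum_{i,j} i\alpha_{i,j}\leq 1$, not $3n$. The $3n$ budget is a \emph{global} consequence of Lemma~\ref{ssxdcffvg}, which shows $c_K\leq 4m-6$ for any configuration on $m$ slots, so that $\sum_i \#C_i^{**}(K)\leq 3m$. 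Your heuristic ``morally one from its left neighbor, one from its right, one internal'' is a local picture that is actually false: the remark after Lemma~\ref{ssxdcffvg} exhibits a configuration in which a single slot is linked to all $n-1$ others. The $3$ is an amortized global average, not a per-slot cap, and the combinatorial tightening you flag as ``the crux'' is exactly the $c_K\leq 4m-6$ bound, which you would need to prove; your proposal does not contain an argument for it.

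Second, the permutation $\sigma$ cannot be discharged as casually as you suggest. Proposition~\ref{edgtn} applies to a product $A_1(x^1)\cdots A_n(x^n)$ with the covariance structure depending on the indices, but once the slots are permuted one must show that the resulting functional $H^K_z$ applied to a permuted tensor can still be rewritten as a norm-controlled map $u^{\sigma,K}_z$ acting on the \emph{unpermuted} derivative tensor — that is precisely the content of Lemma~\ref{nkytrdds}, and it in turn relies on the symmetry identities \eqref{idjnvcdskmvcos0}--\eqref{idjnvcdskmvcos3}. This is a genuine technical step that your outline collapses into one clause (``group the resulting sum by the empirical derivation profile''), and without it there is no route from the trace of the permuted product to the norms $\norm{\partial_{p_1}\otimes\cdots\otimes\partial_{p_i}(P_j)(x)}$ of the individual, unpermuted polynomials.
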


Due to the rather heavy notations in the equation above, it is not straightforward, but because our results are with the minimal tensor norm instead of for example the projective tensor norm, this theorem let us handle some fairly non-trivial case. Let us take a practical example, if for each $j$, one can write $P_j=R_j^{k_j}$ for some $R_j\in\A\langle X_1,\dots,X_d\rangle$ and $k_j\in\N$, then thanks to Lemma \ref{xsdfgyujk} there is a universal constant $C$ which only depends on the degree of the polynomials $D_j$ such that for all $p_1,\dots,p_i\in [1,d]$, 
$$ \norm{\partial_{p_1}\otimes\cdots\otimes \partial_{p_i}(P_j)(x) } \leq C^ik_j^i\norm{D_j(x)}^{k_j}.$$
Consequently, one has that
\begin{align*}
	&\sup_{\substack{(\alpha_{i,j})_{i\geq 0, j\in [1,n]}\in\R^+ \\ \sum_{i,j\geq 0} i \alpha_{i,j}\leq 3n \\ \forall j,\ \sum_i \alpha_{i,j} = 1}} \prod_{j\in [1,n]} \prod_{i\geq 0} \left( \sup_{p_1,\dots,p_i\in [1,d]} \norm{\partial_{p_1}\otimes\cdots\otimes \partial_{p_i}(P_j)(x) } \right)^{\alpha_{i,j}} \\
	&\leq \sup_{\substack{(\alpha_{i,j})_{i\geq 0, j\in [1,n]}\in\R^+ \\ \sum_{i,j\geq 0} i \alpha_{i,j}\leq 3n \\ \forall j,\ \sum_i \alpha_{i,j} = 1}} \prod_{j\in [1,n]} \prod_{i\geq 0} \left( C^ik_j^i\norm{D_j(x)}^{k_j} \right)^{\alpha_{i,j}} \\
	&\leq \sup_{\substack{(\alpha_{i,j})_{i\geq 0, j\in [1,n]}\in\R^+ \\ \sum_{i,j\geq 0} i \alpha_{i,j}\leq 3n \\ \forall j,\ \sum_i \alpha_{i,j} = 1}} \left( C \max_j k_j\right)^{\sum_{i,j} i\alpha_{i,j}} \prod_{j\in [1,n]} \left(\norm{D_j(x)}^{k_j} \right)^{\sum_i \alpha_{i,j}} \\
	&\leq \left( C \max_j k_j\right)^{3n} \prod_{j\in [1,n]} \norm{D_j(x)}^{k_j}
\end{align*}
Thus assuming that the polynomials $D_j$ are self-adjoint, Theorem \ref{masterineq} implies that there is a constant $C$ such that for any $k_j\in [0,k]$, 
\begin{equation}
	\label{dnvflksms}
	\norm{m_{\sigma}(Q(x))} \leq C k^{3n} \norm{Q(x)}. 
\end{equation}
This is much better than any previous estimate that would have used the projective tensor norm and which would typically yield an inequality with $e^{nk}$ instead of $k^{n}$. \\

In order to really understand the subtlety of  Equation \eqref{dnvflksms}, let us compare it with a different case. We set $m_{(1 2)} : a_1\otimes b_1 \otimes a_2\otimes b_2 (\M_N(\C)\otimes\M_N(\C))^{\otimes 2} \mapsto a_1a_2\otimes b_2b_1 \M_N(\C)\otimes\M_N(\C)$, then with $U=\sum_{1\leq i,j\leq N} E_{i,j}\otimes E_{j,i}$
$$ m_{(1 2)}(U\otimes U) = N U,$$
Thus since $U$ is a unitary operator, we have that
$$ \norm{m_{(1 2)}(U\otimes U)} = N \norm{U\otimes U}. $$
Hence one can find an operator $\Delta$ which is the sum of $N^4$ simple tensor but $\norm{m_{(1 2)}(\Delta)} = N \norm{\Delta}$. On the contrary, if $k=\log N$, then $Q(x)$ is also the sum of $N^c$ simple tensors for some constant $c$, but Equation \eqref{dnvflksms} guaranties that $	\norm{m_{\sigma}(Q(x))}$ is bounded by $\norm{Q(x)}$ up to a constant proportional to a power of $\log N$. \\

For those reasons, one can compare Theorem \ref{masterineq} with the Haagerup inequality, see \cite{haagineq}. Indeed it states that for any polynomial $P$ of degree $n$ evaluated in free Haar unitaries and their adjoints, 
$$ \norm{P(u) } \leq (n+1) \norm{P(u)}_2,$$
where $\norm{\cdot}$ is the operator norm but $\norm{\cdot}_2$ is the $l^2$-norm on a Free group $F_d$. Although one would have expected that the operator norm would be bounded by the $l^2$-norm only up to a constant proportional to the exponential of the degree, it turns out that this constant is linear in the degree. Keeping in mind that $n$ times the norm of a polynomial is heuristically the norm of its derivative, with $m:l^2(F_d) \to B(l^2(F_d))$ the unbounded operator that associates an element of $l^2(F_d)$ to its equivalent as an operator on $l^2(F_d)$, heuristically the Haagerup inequality is equivalent to 
$$ \norm{m(P)} \leq \norm{\partial P}_2.$$
Hence the parallel with Theorem \ref{masterineq}, notably since one could adapt the proof of this paper to prove a theorem similar to Theorem \ref{masterineq} but with free Haar unitaries instead of free semicircular variables. \\

To conclude, let us mention that even though we assumed in Theorem \ref{masterineq} that $P_j$ was a polynomial, we could also take it to be a power series. In particular, one could take the function $X\mapsto e^{\i y_j P_j(X)}$, for any $y_j\in\R$. This would then imply that for some constant $C$,
$$ \norm{m_{\sigma}\left(Q(x)\right)} \leq (C \max_j |y_j|)^{3n}.$$
And thus by using Fourier inversion theorem, one could even work with sufficiently smooth functions.

\subsection*{Acknowledgments}

The main result of this paper is a project whom I have been working on for multiple years. Consequently, I would like to thank those with whom I have had the opportunity to exchange about it, starting with my former PhD advisors, Alice Guionnet and Beno\^it Collins, as well as my postdoc mentor Kevin Schnelli. I would also like to thank Ben Hayes, David Jekel as well as Evangelos Nikitopoulos for our conversations on the topic of the Peterson-Thom conjecture.

\section{Framework and standard properties}
\label{3definit}

\subsection{Usual definitions in free probability}
\label{3deffree}

In order to be self-contained, we begin by recalling the following definitions  from free probability.

\begin{defi}~
	\label{3freeprob}
	\begin{itemize}
		\item A \textbf{$\mathcal{C}^*$-probability space} $(\A,*,\tau,\norm{.}) $ is a unital $\mathcal{C}^*$-algebra $(\A,*,\norm{.})$ endowed with a \textbf{state} $\tau$, i.e. a linear map $\tau : \A \to \C$ satisfying $\tau(1_{\A})=1$ and $\tau(a^*a)\geq 0$ for all $a\in \A$. In this paper we always assume that $\tau$ is a \textbf{trace}, i.e. that it satisfies $\tau(ab) = \tau(ba) $ for any $a,b\in\A$. An element of $\A$ is called a 
		\textbf{noncommutative random variable}. We will always work with a faithful trace, namely, for $a\in\A$, $\tau(a^*a)=0$ if and only if $a=0$.
		
		\item  Let $\A_1,\dots,\A_n$ be $*$-subalgebras of $\A$, having the same unit as $\A$. They are said to be \textbf{free} if for all $k$, for all $a_i\in\A_{j_i}$ such that $j_1\neq j_2$, $j_2\neq j_3$, \dots , $j_{k-1}\neq j_k$:
		\begin{equation}
			\label{kddkdxkfl}
			\tau\Big( (a_1-\tau(a_1))(a_2-\tau(a_2))\dots (a_k-\tau(a_k)) \Big) = 0.
		\end{equation}
		Families of noncommutative random variables are said to be free if the $*$-subalgebras they generate are free.
		
		\item Let $ A= (a_1,\ldots ,a_k)$ be a $k$-tuple of random variables. The \textbf{joint $*$-distribution} of the family $A$ is the linear form $\mu_A : P \mapsto \tau\big[ P(A, A^*) \big]$ on the set of polynomials in $2k$ noncommutative variables. By \textbf{convergence in distribution}, for a sequence of families of variables $(A_N)_{N\geq 1} = (a_{1}^{N},\ldots ,a_{k}^{N})_{N\geq 1}$  
		in $\mathcal C^*$-algebras $\big( \mathcal A_N, ^*, \tau_N,\norm{.} \big)$,
		we mean the pointwise convergence of the map 
		$$ \mu_{A_N} : P \mapsto \tau_N \big[ P(A_N, A_N^*) \big],$$
		and by \textbf{strong convergence in distribution}, we mean convergence in distribution, and pointwise convergence
		of the map
		$$P \mapsto \big\| P(A_N, A_N^*) \big\|.$$
		
		\item A family of noncommutative  random variables $ x=(x_1,\dots ,x_d)$ is called a \textbf{free semicircular system} when the noncommutative random variables are free, self-adjoint ($x_i=x_i^*$), and for all $k$ in $\N$ and $i$, one has
		\begin{equation*}
			\tau( x_i^k) =  \int_{\R} t^k d\sigma(t),
		\end{equation*}
		with $d\sigma(t) = \frac 1 {2\pi} \sqrt{4-t^2} \ \mathbf 1_{|t|\leq2} \ dt$ the semicircle distribution. 
		
	\end{itemize}
	
\end{defi}

Let us finally fix a few notations concerning the spaces and traces that we use in this paper.

\begin{defi}
	\label{3tra}
	\begin{itemize}
		\item $\tr_N$ is the non-normalized trace on $\M_N(\C)$.
		\item $\ts_N$ is the normalized trace on $\M_N(\C)$.
		\item $I_N\in\M_N(\C)$ is the identity matrix.
		\item We denote $E_{r,s}$ the matrix with $1$ in the $(r,s)$ entry and zeros in all the other entries. 
		\item We identify $\M_N(\C)\otimes \M_k(\C)$ with $\M_{kN}(\C)$ through the isomorphism $E_{i,j}\otimes E_{r,s} \mapsto E_{i+rN,j+sN} $, similarly we identify $\tr_N\otimes\tr_k$ with $\tr_{kN}$.
		\item If $A=(A_1,\dots,A_d)$ and $B=(B_1,\dots,B_d)$ are two families of operators, then we denote $A\otimes B=(A_1\otimes B_1,\dots,A_d\otimes B_d)$. We typically use the notation $X\otimes \id$ for the family $(X_1\otimes \id,\dots,X_1\otimes \id)$.
	\end{itemize}
\end{defi}

\subsection{The full Fock space}

The full Fock space gives us an explicit construction for a $\CC^*$-algebra which contains a system of free semicircular variables. For a full introduction to the topic, we refer to Chapter 7 of \cite{nica_speicher_2006}. Unlike previous work of the same author, in this paper we will need this construction. Let us start by giving its definition.

\begin{defi}
	Let $\Hi$ be a Hilbert space.
	
	\begin{itemize}
		
		\item We define the \textbf{full Fock space} over $\Hi$ as 
		$$ \F(\Hi) = \bigoplus_{n\geq 0} \Hi^{\otimes n}, $$
		where by convention $\Hi^{\otimes 0}$ is a one-dimensional vector space which is usually denoted by $\C\Omega$ where $\Omega$ is a distinguished vector of norm one, called the \textbf{vacuum vector}.
		\item The \textbf{vacuum expectation state} $\tau$ on $\F(\Hi)$ is the linear functional on $B(\F(\Hi))$ defined by 
		$$\tau: T\in B(\F(\Hi)) \mapsto \langle T\Omega,\Omega\rangle. $$
		\item For each $\xi\in\F(\Hi)$, one defines $l(\xi)$ and $r(\xi)$ the \textbf{left and right creation operator} associated to $\xi$ by $l(\xi) \Omega = r(\xi)\Omega = \xi$ and for all $n\geq 1$,
		$$ l(\xi) \zeta_1\otimes\cdots\otimes\zeta_n = \xi\otimes \zeta_1\otimes\cdots\otimes\zeta_n, $$
		$$ r(\xi) \zeta_1\otimes\cdots\otimes\zeta_n = \zeta_1\otimes\cdots\otimes\zeta_n\otimes\xi.$$
		\item With the convention that the scalar product on $\Hi$ is antilinear with respect to the first argument, the adjoint of the operators above, usually denoted as the \textbf{left and right annihilation operators}, are such that $l(\xi)^* \Omega = r(\xi)^*\Omega = 0$ and for all $n\geq 1$,
		$$ l(\xi)^* \zeta_1\otimes\cdots\otimes\zeta_n = \langle\xi,\zeta_1\rangle\ \zeta_2\otimes\cdots\otimes\zeta_n, $$
		$$ r(\xi)^* \zeta_1\otimes\cdots\otimes\zeta_n = \langle\xi,\zeta_n\rangle\ \zeta_1\otimes\cdots\otimes\zeta_{n-1}.$$
		
	\end{itemize}
	
\end{defi}

One can now state the following proposition which is the reason as to why one needs the full Fock space. For a proof, we refer to Corollary 7.19 of \cite{nica_speicher_2006} as well as the propositions preceding it.
	
\begin{prop}
	Let $\Hi$ be a Hilbert space and let $e_1,\dots,e_n$ be orthonormal vectors, then
	$$x=\big(l(e_1)+l(e_1)^*,\dots,l(e_n)+l(e_n)^*\big)$$
	is a free semicircular system. Besides, the restriction of $\tau$ to the $\CC^*$-algebra generated by $x$ is a faithful trace. We will denote this $\CC^*$-algebra by $\CC^*(x)$, and $l(e_i)$ by $l_i$.
\end{prop}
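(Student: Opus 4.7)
The plan is to verify each claim directly by unwinding the action of creation and annihilation operators on the Fock space $\F(\Hi)$. Self-adjointness of $x_i = l(e_i) + l(e_i)^*$ is immediate from the definition. To obtain the joint distribution I would compute, for every multi-index $(i_1, \ldots, i_k)$, the moment $\tau(x_{i_1}\cdots x_{i_k}) = \langle x_{i_1}\cdots x_{i_k}\Omega, \Omega\rangle$ by expanding each $x_{i_j}$ as $l(e_{i_j}) + l(e_{i_j})^*$, which produces $2^k$ words in the creation and annihilation letters. Applied to $\Omega$ from right to left, each creation pushes a vector onto the top of the tensor and each annihilation pops the top vector with an inner product factor $\langle e_{i_p}, e_{i_q}\rangle = \delta_{i_p, i_q}$. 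A word survives the final pairing with $\Omega$ if and only if the associated sign sequence, read right to left, is a Dyck word, equivalently corresponds to a non-crossing pair partition $\pi \in NCP(k)$ matching each creation with a later annihilation, and moreover for every block $\{p,q\} \in \pi$ one has $i_p = i_q$ by orthonormality. This yields
\[
\tau(x_{i_1}\cdots x_{i_k}) = \sum_{\pi \in NCP(k)} \prod_{\{p,q\} \in \pi} \delta_{i_p, i_q},
\]
which is precisely the defining moment formula of a free semicircular system; both the semicircle marginals and the freeness of $x_1, \ldots, x_n$ follow simultaneously.

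For the faithfulness of $\tau$ on $\CC^*(x)$, the key identity is $\tau(a^*a) = \|a\Omega\|^2$, which reduces the claim to showing that $\Omega$ is a separating vector for $\CC^*(x)$. To that end I would introduce the right-operator analogues $y_i = r(e_i) + r(e_i)^*$ together with the $\CC^*$-algebra $\B$ they generate. Using the orthonormality of the $e_i$'s, a direct computation shows $[x_i, y_j] = 0$ on $\F(\Hi)$ for all $i, j$, so $\B$ lies in the commutant of $\CC^*(x)$ on the invariant sub-Fock-space $\F(\mathrm{span}(e_1, \ldots, e_n))$. An induction on $k$ using
\[
y_{i_1}\cdots y_{i_k}\Omega = e_{i_k}\otimes \cdots \otimes e_{i_1} + \text{a vector in } \B\Omega \text{ of total tensor length less than } k
\]
shows that every basic tensor $e_{j_1}\otimes\cdots\otimes e_{j_k}$ lies in $\B\Omega$, so $\Omega$ is cyclic for $\B$ on this sub-Fock-space. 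Hence $\Omega$ is separating for $\B' \supseteq \CC^*(x)$, which is exactly the faithfulness statement.

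The main obstacle is the bookkeeping in the moment computation: one must argue carefully that a right-to-left evaluation produces a surviving term exactly when the signs form a Dyck word, identify the resulting matching with a non-crossing pair partition, and collapse the orthonormality factors to the product $\prod \delta_{i_p, i_q}$. The faithfulness argument, once the commuting right algebra $\B$ is brought in, is routine and parallels the standard Tomita-type argument for cyclic and separating vectors.
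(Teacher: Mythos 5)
The paper gives no proof of this proposition; it only points to Corollary 7.19 of Nica--Speicher and the material preceding it, and your Fock-space moment computation followed by a cyclic/separating-vector argument is exactly the approach taken in that reference. The moment calculation is fine: the individual commutators $[l(e_i)^*,r(e_j)]$ and $[l(e_i),r(e_j)^*]$ do \emph{not} vanish (each equals $\pm\delta_{ij}$ times the projection onto $\C\Omega$), but they cancel in $[x_i,y_j]$, so the commutation you need does hold on all of $\F(\Hi)$, as you claim.

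There is one genuine gap in the faithfulness part when $\Hi$ is strictly larger than $\Hi_0:=\mathrm{span}(e_1,\dots,e_n)$. Your argument shows that if $a\in\CC^*(x)$ and $\tau(a^*a)=0$ then $a\Omega=0$, hence $a$ annihilates $\overline{\B\Omega}=\F(\Hi_0)$; but $\Omega$ is \emph{not} cyclic for $\B$ on the whole of $\F(\Hi)$, so you have only shown $a|_{\F(\Hi_0)}=0$, not $a=0$ in $B(\F(\Hi))$. To close the gap one must observe that $\F(\Hi)$ decomposes as an orthogonal direct sum of $\CC^*(x)$-invariant subspaces, namely $\bigoplus_{k\geq 0}\Hi_0^{\otimes k}\otimes \mathcal V$ where $\mathcal V=\C\Omega\oplus\bigoplus_{m\geq 1}\Hi_0^{\perp}\otimes\Hi^{\otimes(m-1)}$ is the space of ``ground states'' killed by every $l(e_i)^*$, and on each summand the action of $\CC^*(x)$ is unitarily equivalent to its action on $\F(\Hi_0)$; hence the restriction $\CC^*(x)\to\CC^*(x)|_{\F(\Hi_0)}$ is isometric and in particular injective. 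In the paper's actual use of the proposition one always takes $\dim\Hi=d=n$, i.e.\ $\Hi=\Hi_0$, so this subtlety never arises there, but as written the statement allows larger $\Hi$ and your proof should address it.
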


In the rest of the paper, when we define $x$ to be a $d$-tuple of free semicircular variables, we implicitly pick a Hilbert space of dimension $d$ as well as an orthonormal basis, and $x$ is built with the Proposition above.

\subsection{The free product of $\M_N(\C)$ and $\CC^*(x)$}

\label{sdkmvdoijiytd}

In order to consider deterministic matrices and free semicircular variables, we need to define a space in which they can both live in simultaneously. To do so, we define	$(\widetilde{\B}_N,\widetilde{\tau}_N)$ as the free product of $\M_N(\C)$ with a system of $d$ free semicircular variable, that is the $\mathcal{C}^*$-probability space built with Theorem 7.9 of \cite{nica_speicher_2006}. Note that when restricted to $\M_N(\C)$, $\widetilde{\tau}_N$ is just the regular renormalized trace on matrices. However this definition is not especially intuitive, besides we will need a different construction in the rest of the paper. Hence we set the following.

\begin{defi}
	\label{eiucskc}
	Given $d,N\in\N$, as well as free semicircular variables $(x^i_{r,s})_{1\leq i\leq d, 1\leq r\leq s\leq N}$ as well as $(y^i_{r,s})_{1\leq i\leq d, 1\leq r< s\leq N} $, we set $\B_N=\M_N(\B)$ and 
	$$
	(x_i^N)_{r,s} = \left\{
	\begin{array}{ll}
		\frac{x_{r,s}^i+\i\ y_{r,s}^i}{\sqrt{2N}} & \mbox{if } r<s, \\
		x^i_{r,s} & \mbox{if } r=s, \\
		\frac{x_{s,r}^i-\i\ y_{s,r}^i}{\sqrt{2N}} & \mbox{if } r>s.
	\end{array}
	\right.
	$$
	Then we endow $\B_N$ with the involution ${(a_{i,j})_{1\leq i,j\leq N}}^* = (a_{j,i}^*)_{1\leq i,j\leq N}$ and the trace $\tau_N : A\in\B_N\mapsto\tau\left(\frac{1}{N}\tr_N(A)\right)$.	
\end{defi}

Note that with $\1_{\B}$ the unit of $\B$, one can easily view $\M_N(\C)$ as a subalgebra of $\B_N$ thanks to the morphism $(a_{r,s})\in\M_N(\C)\mapsto (a_{r,s}\1_{\B})\in\B_N$.

\begin{prop}
	\label{dojvdsomv}
	With the trace and the involution defined as above, $\B_N$ is a $\mathcal{C}^*$-probability spaces. Besides the family $(x_i^N)_{1\leq i\leq d}$ is a free semicircular system, and it is free from $\M_N(\C)$.
\end{prop}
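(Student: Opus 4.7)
The plan is to establish the three claims in sequence: the $\CC^*$-probability space structure of $\B_N$, the free semicircular distribution of $(x_i^N)$, and the freeness of this family from $\M_N(\C)$. For the first claim, $\M_N(\B)$ inherits a canonical $\CC^*$-algebra structure from $\B$ (identifying it with $\B\otimes_{\min}\M_N(\C)$), with the given involution being the natural one. The functional $\tau_N=\tau\circ\frac{1}{N}\tr_N$ is positive and tracial because $\tau$ is positive and tracial on $\B$ and the normalized matrix trace has both properties; faithfulness on $\B_N$ follows from faithfulness of $\tau$ on $\B$ together with faithfulness of $\frac{1}{N}\tr_N$ on positive matrices.

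For the second claim, I would use the moment characterization of Definition \ref{3freeprob}: it suffices to show that for every $k$ and indices $i_1,\dots,i_k\in[1,d]$,
\begin{equation*}
	\tau_N\bigl(x_{i_1}^N\cdots x_{i_k}^N\bigr) \;=\; \sum_{\pi\in NCP_2(k)} \prod_{(p,q)\in\pi} \delta_{i_p,i_q}.
\end{equation*}
Expanding the matrix product one obtains
\begin{equation*}
	\tau_N\bigl(x_{i_1}^N\cdots x_{i_k}^N\bigr) \;=\; \frac{1}{N} \sum_{r_1,\dots,r_k=1}^{N} \tau\Bigl((x_{i_1}^N)_{r_1,r_2}\cdots (x_{i_k}^N)_{r_k,r_1}\Bigr),
\end{equation*}
and each entry $(x_i^N)_{r,s}$ is an explicit linear combination of semicircular variables in $\B$. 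The key ingredient is the covariance identity
\begin{equation*}
	\tau\bigl((x_i^N)_{r,s}(x_j^N)_{r',s'}\bigr) \;=\; \frac{\delta_{ij}}{N}\,\mathbf{1}_{r=s',\,s=r'},
\end{equation*}
which I would verify by case analysis on the relative order of $(r,s)$ and $(r',s')$ using freeness and the unit variance of the underlying $x^i_{r,s}, y^i_{r,s}$. Applying the free Wick formula for semicircular variables (given in the introduction preceding Theorem \ref{masterineq}) to the inner trace, the sum collapses onto pairs of indices matched by non-crossing pair partitions, and the resulting Wigner-type count of compatible index tuples yields exactly the factor $N^{k/2}$ needed to cancel the $1/N$ and the $(1/N)^{k/2}$ coming from the covariances, producing the target identity.

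For the third claim, I would extend the same computation to mixed moments. By Definition \ref{3freeprob}, freeness is equivalent to the vanishing of $\tau_N$ on alternating centered products of elements of $\M_N(\C)$ and of the $*$-algebra generated by $x^N$. Expanding such a product gives a sum of terms of the shape
\begin{equation*}
	\frac{1}{N}\sum_{r_1,\dots,r_k}  (D_0)_{r_0,r_1}\,(D_1)_{r_2,r_3}\cdots(D_k)_{r_{2k},r_0}\; \tau\bigl((x_{i_1}^N)_{r_1,r_2}\cdots(x_{i_k}^N)_{r_{2k-1},r_{2k}}\bigr),
\end{equation*}
with scalar coefficients from the deterministic matrices $D_j\in\M_N(\C)$. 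Applying the free Wick formula to the tracial factor, the centering hypotheses on the $D_j$ and on the polynomials in $x^N$ force each contributing non-crossing pair partition to close a cycle on some centered block, and a direct telescoping shows every term vanishes. Equivalently, one observes that this computation shows the joint $*$-distribution of $(x^N,\M_N(\C))$ under $\tau_N$ coincides with that of $(x,\M_N(\C))$ in the free product $\widetilde{\B}_N$ of $\M_N(\C)$ with a free semicircular system, so freeness is inherited by uniqueness of the joint distribution.

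The main obstacle is the combinatorial Wigner-type identification in Step 2: one must check that, for every fixed $N$ (not only asymptotically), the sum over non-crossing pair partitions produced by the free Wick formula and the sum over matrix indices combine exactly into the semicircular moment, with the off-diagonal, diagonal, and transpose-related cases of the covariance formula contributing coherently. This is standard in free probability but requires careful bookkeeping; once it is done for scalar moments, the extension to mixed moments involving $\M_N(\C)$ needed for freeness is essentially the same argument with scalar weights.
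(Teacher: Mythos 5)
Your proof is correct in outline but follows a genuinely different route from the paper's. The paper proves the proposition indirectly, via a uniqueness-of-limit argument: it replaces the underlying free semicircular variables $x^i_{r,s},y^i_{r,s}$ in the definition of $x^N_i$ by \emph{GUE matrices of size} $k$ and notes that the resulting matrix $X^N_i$ is itself a GUE matrix of size $kN$; letting $k\to\infty$ and invoking Voiculescu's asymptotic freeness theorem (the paper's citation is \cite[Lemma 5.4.7]{alice}) on both sides of this identification shows that the joint distribution of $(x^N_i)\cup (Z^N_j)$ is forced to coincide with that of a free semicircular system free from $\M_N(\C)$. That proof is short, but it rests on the asymptotic freeness of GUE matrices from deterministic matrices, a nontrivial input. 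Your proof is more elementary and more direct: you expand the normalized trace in matrix entries, apply the free Wick formula (which, unlike the classical Wick formula, produces \emph{only} non-crossing pair partitions, so there are no error terms to control for finite $N$), and then invoke the exact Wigner-type index count $N^{k/2+1}$ associated to a non-crossing pairing to cancel the normalizations. This makes your argument self-contained and exhibits the identity as exact for every $N$ rather than as a limit. The trade-off is that your proof requires the genus-zero combinatorial bookkeeping that the paper avoids, and the freeness-from-$\M_N(\C)$ step is only sketched in your writeup: one must actually verify that for each non-crossing pairing the matrix-index constraints decompose the scalar factors $(D_j)_{r,s}$ into a product of traces of alternating products, and that the centering hypotheses kill at least one of these factors; this is the same calculation that appears in direct proofs of asymptotic freeness and is not quite as automatic as ``essentially the same argument with scalar weights,'' though it does go through. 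One further remark: your covariance identity $\tau((x_i^N)_{r,s}(x_j^N)_{r',s'})=\frac{\delta_{ij}}{N}\mathbf{1}_{r=s',s=r'}$ implicitly assumes the diagonal entries of $x_i^N$ carry a normalization $\frac{1}{\sqrt{N}}$; as written, Definition \ref{eiucskc} omits this factor on the diagonal, which appears to be a typo in the paper, and your reading is the consistent one.
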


\begin{proof}
	The trace $\tau_N$ is faithful since given $a\in\B_N$, 
	$$ \tau_N(a^*a) = \frac{1}{N} \sum_{1\leq i,j\leq N} \tau(a_{i,j}^*a_{i,j}),$$
	which is equal to $0$ if and only if for all $i,j\in [1,N]$, $\tau(a_{i,j}^*a_{i,j})=0$, and since $\tau$ is faithfull this is equivalent to $a=0$. Let us now prove that $(x_i^N)_{1\leq i\leq d}$ is a free semicircular system, free from $\M_N(\C)$. Thanks to \cite[Lemma 5.4.7]{alice}, the family $(X^{i,k}_{r,s})_{1\leq i\leq d, 1\leq r\leq s\leq N}$ and $(Y^{i,k}_{r,s})_{1\leq i\leq d, 1\leq r< s\leq N} $, where $X^{i,k}_{r,s}$ and $Y^{i,k}_{r,s}$ are independent GUE random matrices of size $k$ converges in distribution towards $(x^i_{r,s})_{1\leq i\leq d, 1\leq r\leq s\leq N}$ and $(y^i_{r,s})_{1\leq i\leq d, 1\leq r< s\leq N} $. Consequently, the family $(X^N_i)_{1\leq i\leq d}\cup (Z_j^N\otimes I_k)_{1\leq j\leq r}$, where $Z_j^N\in \M_N(\C)$ and $X^N_i$ is defined with independent GUE random matrices of size $k$ instead of free semicircular variables, converges in distribution towards $(x^N_i)_{1\leq i\leq d}\cup (Z_j^N)_{1\leq j\leq r}$. 
	
	However $(X^N_i)_{1\leq i\leq d}$ has the same law as a $d$-tuple of GUE random matrices of size $kN$. Thus thanks once again to \cite[Lemma 5.4.7]{alice}, $(X^N_i)_{1\leq i\leq d}\cup (Z_j^N\otimes I_k)_{1\leq j\leq r}$ converges in distribution towards $(x_i)_{1\leq i\leq d}\cup (Z_j^N)_{1\leq j\leq r}$, where $(x_i)_{1\leq i\leq d}$ is a free semicircular system free from the family $(Z_j^N)_{1\leq j\leq r}$. Hence the conclusion by unicity of the limit.
\end{proof}

Let us finally note that in order to shorten the notations, we often drop the $N$ and simply write $x_i$ instead $x_i^N$ since thanks to the previous theorem, we have that the trace $\tau_N$ of a polynomial in $x^N$ is the same for any $N$ (i.e. the trace of a polynomial evaluated in a free semi-circular system).

\subsection{A few notions of operator algebra}

In this paper we will be taking the norm of tensor of $\CC^*$-algebras. To do so, we work with the minimal tensor product also named the spatial tensor product. For more information we refer to \cite[Chapter 6]{murphy}.

\begin{defi}
	\label{1mini}
	Let $\A$ and $\B$ be $\CC^*$-algebras with faithful representations $(H_{\A},\phi_{\A})$ and $(H_{\B},\phi_{\B})$, then if $\otimes_2$ is the tensor product of Hilbert spaces, $\A\otimes_{\min}\B$ is the completion of the image of $\phi_{\A}\otimes\phi_{\B}$ in 	$B(H_{\A}\otimes_2 H_{\B})$ for the operator norm in this space. This definition is independent of the representations that we fixed.
	
\end{defi}

It is not necessary to understand in depth the minimal tensor product to read the rest of the paper. We mainly need it in order to use the following two lemmas. The first one is Lemma 4.1.8 from \cite{ozabr}.
\begin{lemma}
	\label{1faith}
	Let $(\A,\tau_{\A})$ and $(\B,\tau_{\B})$ be $\CC^*$-algebra with faithful traces, then $\tau_{\A}\otimes\tau_{\B}$ extends uniquely to a faithful trace $\tau_{\A}\otimes_{\min}\tau_{\B}$ on $\A\otimes_{\min}\B$. 
\end{lemma}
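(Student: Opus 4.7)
The plan is to proceed by realizing both algebras through GNS and then exploiting that a faithful tracial vector is separating for the enveloping von Neumann algebra. First, I would apply the GNS construction to $(\A,\tau_{\A})$ and $(\B,\tau_{\B})$. Because the traces are faithful, this yields faithful $*$-representations $\pi_{\A}:\A\to B(H_{\A})$ and $\pi_{\B}:\B\to B(H_{\B})$, together with unit cyclic vectors $\xi_{\A},\xi_{\B}$ such that $\tau_{\A}(a)=\langle\pi_{\A}(a)\xi_{\A},\xi_{\A}\rangle$ and likewise for $\B$. A standard fact about tracial states is that the GNS cyclic vector of a faithful trace is also separating for $\pi(\A)''$ (the tracial identity $\|\pi(a)\xi\|=\|\pi(a^*)\xi\|$ together with faithfulness forces the left kernel of $\xi$ in $\pi(\A)''$ to be trivial).

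Second, I would use Definition \ref{1mini} with these specific faithful representations to realize $\A\otimes_{\min}\B$ concretely as a norm-closed $*$-subalgebra of $B(H_{\A}\otimes_2 H_{\B})$ through $\pi_{\A}\otimes\pi_{\B}$. The vector state $\omega(T)=\langle T(\xi_{\A}\otimes\xi_{\B}),\xi_{\A}\otimes\xi_{\B}\rangle$ on $B(H_{\A}\otimes_2 H_{\B})$ restricts to a bounded positive linear functional on $\A\otimes_{\min}\B$ which on elementary tensors agrees with $\tau_{\A}\otimes\tau_{\B}$. Since algebraic tensors are norm-dense in $\A\otimes_{\min}\B$ and $\tau_{\A}\otimes\tau_{\B}$ is already determined on them by bilinearity, this extension is automatically unique. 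The trace property extends from elementary tensors $(a_1\otimes b_1)(a_2\otimes b_2)$, where it follows from $\tau_{\A}(a_1a_2)\tau_{\B}(b_1b_2)=\tau_{\A}(a_2a_1)\tau_{\B}(b_2b_1)$, to all of $\A\otimes_{\min}\B$ by bilinearity and norm continuity.

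The main obstacle is faithfulness. Suppose $x\in\A\otimes_{\min}\B$ satisfies $(\tau_{\A}\otimes_{\min}\tau_{\B})(x^*x)=0$; then $\|(\pi_{\A}\otimes\pi_{\B})(x)(\xi_{\A}\otimes\xi_{\B})\|^2=0$, so $(\pi_{\A}\otimes\pi_{\B})(x)$ annihilates the vector $\xi_{\A}\otimes\xi_{\B}$. To conclude $(\pi_{\A}\otimes\pi_{\B})(x)=0$, I would argue that $\xi_{\A}\otimes\xi_{\B}$ is separating for $\pi_{\A}(\A)''\bar\otimes\pi_{\B}(\B)''$, which contains $(\pi_{\A}\otimes\pi_{\B})(\A\otimes_{\min}\B)$. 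This relies on the Tomita commutation theorem $(\pi_{\A}(\A)''\bar\otimes\pi_{\B}(\B)'')' = \pi_{\A}(\A)'\bar\otimes\pi_{\B}(\B)'$: since each $\xi_{\A},\xi_{\B}$ is separating for its factor it is cyclic for the corresponding commutant, hence $\xi_{\A}\otimes\xi_{\B}$ is cyclic for $\pi_{\A}(\A)'\bar\otimes\pi_{\B}(\B)'$, and therefore separating for its commutant $\pi_{\A}(\A)''\bar\otimes\pi_{\B}(\B)''$. Combining this with the injectivity of $\pi_{\A}\otimes\pi_{\B}$ on $\A\otimes_{\min}\B$ (which is built into Definition \ref{1mini}) gives $x=0$.

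The technical heart is thus the passage from the separating property of each individual GNS vector to the separating property of their tensor product on the spatial tensor product of $\CC^*$-algebras; everything else is formal. Since this is precisely the content of \cite[Lemma 4.1.8]{ozabr}, an alternative route would simply be to cite this reference, which is what the excerpt already does.
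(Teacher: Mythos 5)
The paper does not prove this lemma; it simply cites it as Lemma 4.1.8 of Brown--Ozawa, so there is no in-paper argument to compare against. Your GNS-plus-commutation-theorem proof is correct and is essentially the standard argument underlying that reference (with the key step being that $\xi_{\A}\otimes\xi_{\B}$ is cyclic for $\pi_{\A}(\A)'\bar\otimes\pi_{\B}(\B)'$ and hence, by the Tomita commutation theorem $(\mathcal{M}\bar\otimes\mathcal{N})'=\mathcal{M}'\bar\otimes\mathcal{N}'$, separating for $\pi_{\A}(\A)''\bar\otimes\pi_{\B}(\B)''$, which contains the image of $\A\otimes_{\min}\B$).
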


We will need the following lemma to ensure that the map $\id_{\A}\otimes\tau$ is bounded.

\begin{prop}
	\label{docmmfsokcm}
	Let $(\A,\tau_{\A})$ and $(\B,\tau_{\B})$ be $\CC^*$-algebras with $\tau_{\A}$ a faithful trace and $\tau_{\B}$ a state, then the norm of the map $\id_{\A}\otimes\tau_{\B}: \A\otimes_{\min}\B \to\A$ is equal to one.
\end{prop}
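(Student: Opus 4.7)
The plan is to establish the two inequalities separately. The lower bound $\|\id_{\A}\otimes\tau_{\B}\|\geq 1$ is immediate: since $\tau_{\B}$ is a state, $\tau_{\B}(1_{\B})=1$, so $(\id_{\A}\otimes\tau_{\B})(a\otimes 1_{\B})=a$ for every $a\in\A$, while the minimal tensor norm satisfies $\|a\otimes 1_{\B}\|_{\min}=\|a\|_{\A}$. Testing the map on such elementary tensors already saturates norm $1$.

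For the upper bound my strategy is to realize $\id_{\A}\otimes\tau_{\B}$ as a slice map against a vector state in the second factor, and then control it by Cauchy--Schwarz. First I would apply the GNS construction to $\tau_{\B}$ to obtain a cyclic representation $(\pi_{\B},H_{\B},\xi)$ with $\xi$ a unit vector satisfying $\tau_{\B}(b)=\langle\pi_{\B}(b)\xi,\xi\rangle$; if $\pi_{\B}$ is not already faithful I would replace it by its direct sum with a faithful representation of $\B$, keeping $\xi$ in the first summand. By the independence of the minimal tensor norm from the choice of faithful representations recalled in Definition \ref{1mini}, this substitution costs nothing. Fixing also a faithful representation $(\pi_{\A},H_{\A})$ of $\A$, any $t\in\A\otimes_{\min}\B$ sits in $B(H_{\A}\otimes_2 H_{\B})$ via $\pi_{\A}\otimes\pi_{\B}$. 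For $t$ in the algebraic tensor product and unit vectors $\eta,\eta'\in H_{\A}$, a routine check on simple tensors, extended by linearity, yields
$$\bigl\langle\pi_{\A}\bigl((\id_{\A}\otimes\tau_{\B})(t)\bigr)\eta,\eta'\bigr\rangle = \bigl\langle(\pi_{\A}\otimes\pi_{\B})(t)(\eta\otimes\xi),\eta'\otimes\xi\bigr\rangle,$$
whose absolute value is at most $\|t\|_{\min}$ by Cauchy--Schwarz since $\xi,\eta,\eta'$ are unit vectors. Taking the supremum over $\eta,\eta'$ and using that $\pi_{\A}$ is isometric gives $\|(\id_{\A}\otimes\tau_{\B})(t)\|_{\A}\leq\|t\|_{\min}$ on the algebraic tensor product, and density of $\A\otimes\B$ in $\A\otimes_{\min}\B$ extends both the map and the bound to the whole completion.

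The only genuine technical point, and thus the main obstacle, is a bookkeeping one: making sure the Hilbert space carrying the GNS vector-state implementation of $\tau_{\B}$ can also serve as the underlying space of a faithful representation of $\B$ entering the definition of $\otimes_{\min}$. This is handled by the direct-sum augmentation mentioned above together with the representation-independence of the spatial tensor norm; no deeper $\CC^*$-algebraic input is required. Note finally that faithfulness of $\tau_{\A}$ plays no role in the argument, presumably it is kept in the statement only for coherence with the standing hypotheses of the paper.
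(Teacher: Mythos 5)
Your proof is correct, and it follows a genuinely different route from the paper's. The paper establishes the bound in two steps: it first shows, via the GNS (left-regular) representation $L:\A\to B(L^2(\A,\tau_{\A}))$ and the faithfulness of $\tau_{\A}$, that an element $x\in\A$ satisfying $\tau_{\A}(xy^*y)\geq 0$ for all $y$ is non-negative, then deduces from Lemma~\ref{1faith} that $\id_{\A}\otimes\tau_{\B}$ is a positive map, and finally extracts the norm bound from positivity by sandwiching $\id_{\A}\otimes\tau_{\B}(x)$ between $\pm\norm{x}\id_{\A}$. You instead realize the slice map as a compression against the GNS cyclic vector of $\tau_{\B}$ inside the spatial tensor representation and invoke Cauchy--Schwarz, which yields the upper bound in one stroke and for arbitrary (not just self-adjoint) elements of the algebraic tensor product. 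Your observation that $\tau_{\A}$ need only be a state, not a faithful trace, is accurate: the faithfulness hypothesis is genuinely used in the paper's proof (to make $L$ an isometry so that positivity in $B(L^2)$ transfers back to $\A$), while your argument dispenses with it. The direct-sum augmentation you use to replace a possibly non-faithful GNS representation of $\B$ is the standard fix and does not disturb the vector-state identity, so your technical concern is handled correctly. In short: same conclusion, independent and somewhat more economical argument that also slightly weakens the hypotheses.
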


\begin{proof}
	
	Let us start by showing that if $\tau$ is a faithful trace on a $\CC^*$-algebra $\mathcal{M}$, and $x\in\mathcal{M}$ is such that for all $y\in\mathcal{M}$, $\tau(xy^*y)\geq 0$ then $x$ is a non-negative operator. Indeed $L:\mathcal{M}\to B(L^2(\mathcal{M},\tau))$ which to an element $x\in\mathcal{M}$ associates the operator $\widehat{x}:\widehat{a}\in L^2(\mathcal{M},\tau)\mapsto \widehat{xa}$ is an isometry since
	$$ \norm{\widehat{x}}^2 = \sup_{y\neq 0} \frac{\tau((xy)^*xy)}{\tau(y^*y)} \leq \norm{x}^2,$$
	and with $\sigma_{x^*x}$ the spectral measure of $x$, $f:\R\to\R$ a continuous function, $C$ the infimum of the support of $f$,
	$$ \norm{\widehat{x}}^2 \geq \frac{\tau((xf(x^*x))^*xf(x^*x))}{\tau(f(x^*x)^*f(x^*x))} = \frac{\int_{\R} a|f(a)|^2 d\sigma_{x^*x}(a) }{\int_{\R} |f(a)|^2 d\sigma_{x^*x}(a)} \geq C.$$
	Hence $L$ is indeed an isometry since one can take $f$ such that $C$ is as close of $\norm{x^*x}$ as we want and $f(x^*x)\neq 0$. 
	As a consequence, if $x\in\mathcal{M}$ is such that for all $y\in\mathcal{M}$, $\tau(xy^*y)\geq 0$, then $\langle \widehat{x} \widehat{y^*}, \widehat{y^*} \rangle_{L^2(\mathcal{M},\tau)} = \tau(xy^*y)\geq 0$, and $\widehat{x}$ is a positive operator. However since $L$ is an isometry and a morphism of $\CC^*$algebra, $L(M)$ is a $\CC^*$algebra and $L^{-1}$ a morphism of  $\CC^*$algebra. Hence $x=L^{-1}(\widehat{x})$ is also positive.
	
	Thus for any $x\in \A\otimes\B$, $y\in A$, if $x$ is non-negative,
	$$ \tau_{\A}\left( \id_{\A}\otimes\tau_{\B}(x) y^*y \right) = \tau_{\A}\otimes\tau_{\B}\big(x((y^*y)\otimes\id_{\B})\big) \geq 0.$$
	Thus $\id_{\A}\otimes\tau_{\B}(x)$ is non-negative. Thus for any $x\in \A\otimes\B$, 
	$$ \norm{x}\id_{\A} - \id_{\A}\otimes\tau_{\B}\left( x \right) = \id_{\A}\otimes\tau_{\B}\left( \norm{x}\id_{\A}\otimes\id_{\B} - x \right)\geq 0,$$
	$$ \norm{x}\id_{\A} + \id_{\A}\otimes\tau_{\B}\left( x \right) = \id_{\A}\otimes\tau_{\B}\left( \norm{x}\id_{\A}\otimes\id_{\B} + x \right)\geq 0.$$
	Thus $\norm{\id_{\A}\otimes\tau_{\B}\left( x \right)} \leq \norm{x}$, hence the conclusion.
	
\end{proof}

\subsection{Noncommutative polynomials and derivatives}
\label{3poly}

We denote by 
$$ \A\langle X_1,\dots,X_d,Z_1,\dots,Z_{q}\rangle = \A\otimes \C\langle X_1,\dots,X_d,Z_1,\dots,Z_{q}\rangle$$
the set of \textbf{noncommutative polynomials} in $d+q$ variables with coefficients in $\A$. Thus an element $P\in \A\langle X_1,\dots,X_d,Z_1,\dots,Z_{q}\rangle$ can be written uniquely as such 
$$ P = \sum_{M \text{ monomial}} a_M\otimes M,$$
where $a_M\in \A$, and there is a finite number of monomials $M$ such that $a_M\neq 0$. Besides for $P\in \A\langle X_1,\dots,X_d,Z_1,\dots,Z_{q}\rangle$ as above, we also define its degree by
\begin{equation}
	\deg P = \sup \{ \deg M\ |\ M\text{ monomial such that } a_M\neq 0\}.
\end{equation}


Now let us define the noncommutative derivative, it is a widely used tool in the field of probability, see for example the work of Voiculescu, \cite{Vo91,refdif,refdif2}.

\begin{defi}
	\label{3application}
	
	If $1\leq i\leq d$, one defines the \textbf{noncommutative derivative} 
	$$\partial_i: \C\langle X_1,\dots,X_d,Z_1,\dots,Z_{q}\rangle \longrightarrow \C\langle X_1,\dots,X_d,Z_1,\dots,Z_{q}\rangle^{\otimes 2}$$
	by its value on a monomial  $M\in \C\langle X_1,\dots,X_d,Z_1,\dots,Z_{q}\rangle$  given by
	$$ \partial_i M = \sum_{M=AX_iB} A\otimes B \,$$
	and then extend it by linearity to all polynomials. We can also define $\partial_i$ by induction with the formulas,
	\begin{equation}
		\label{3leibniz}
		\begin{array}{ccc}
			&\forall P,Q\in \C\langle X_1,\dots,X_d,Z_1,\dots,Z_{q}\rangle,\quad \partial_i (PQ) = \partial_i P \times \left(1\otimes Q\right)	 + \left(P\otimes 1\right) \times \partial_i Q , \\
			& \\
			&\forall i,j,\quad \partial_i X_j = \delta_{i,j} 1\otimes 1,\quad \partial_i Z_j = 0\otimes 0.
		\end{array}
	\end{equation}
	
\end{defi}

In this paper however, we will need to work not only with scalar polynomials, but also operator valued polynomials, i.e. elements of $\A\langle X_1,\dots,X_d,Z_1,\dots,Z_{q}\rangle$. In order to do so we introduce the following definition which is an extension of the previous one.

\begin{defi}
	\label{3applicationopv}
	
	Given $1\leq i_1,\dots,i_n\leq d$, one defines the \textbf{operator-valued noncommutative derivative} 
	$$\partial_{i_1}\otimes\cdots\otimes \partial_{i_n}: \A\langle X_1,\dots,X_d,Z_1,\dots,Z_{q}\rangle \longrightarrow \A\otimes (\C\langle X_1,\dots,X_d,Z_1,\dots,Z_{q}\rangle^{\otimes n})$$
	by the following composition of maps,
	\begin{align}
		\partial_{i_1}\otimes\cdots\otimes \partial_{i_n} := (\id_{\A}\otimes\partial_{i_n}\otimes &\id_{\C\langle X_1,\dots,X_d,Z_1,\dots,Z_{q}\rangle^{\otimes n-1}})\circ \cdots \\
		&\dots\circ (\id_{\A}\otimes\partial_{i_2}\otimes\id_{\C\langle X_1,\dots,X_d,Z_1,\dots,Z_{q}\rangle}) \circ (\id_{\A}\otimes\partial_{i_1}) \nonumber
	\end{align}
	
\end{defi}

To conclude this subsection, we would like to mention that the map $\partial_i$ is related to the so-called \textbf{Schwinger-Dyson equations} on semicircular variable thanks to the following proposition whom one can find a proof in \cite[Lemma 5.4.7]{alice}. Although we will not make direct use of it, the heuristic behind certain results of this paper, such as Proposition \ref{edgtn}, are linked with this one.

\begin{prop}
	\label{3SDE}
	Let $ x=(x_1,\dots ,x_d)$ be a free semicircular system in a $\mathcal{C}^*$-probability space $(\A,*,\tau,\norm{.}) $, then for any $Q\in \C\langle X_1,\dots,X_d\rangle$,
	$$ \tau(Q(x)\ x_i) = \tau\otimes\tau(\partial_i Q(x))\ .$$
	
\end{prop}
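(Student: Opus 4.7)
The plan is to reduce the identity to the case of a monomial by linearity and then match both sides through the Wick-type moment formula for free semicircular systems, which is the formula recalled just before Theorem \ref{masterineq} in the introduction:
$$ \tau(x_{k_1}\cdots x_{k_m}) = \sum_{\pi \in NCP(m)} \prod_{(p,q) \in \pi} \delta_{k_p, k_q}. $$
By linearity in $Q$ it suffices to prove the identity for $Q = X_{j_1} \cdots X_{j_n}$, in which case it becomes
$$ \tau(x_{j_1}\cdots x_{j_n} x_i) = (\tau \otimes \tau)(\partial_i Q(x)). $$

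For the left-hand side, I would apply the Wick formula above to the tuple $(k_1, \dots, k_{n+1}) = (j_1, \dots, j_n, i)$, and split the sum according to the partner $k \in \{1,\dots, n\}$ of the last element $n+1$. Non-crossingness of $\pi$ forces every remaining block to lie entirely either inside $\{1, \dots, k-1\}$ or inside $\{k+1, \dots, n\}$, so that the non-crossing pair partitions of $\{1,\dots, n+1\}$ with $n+1$ paired to $k$ are in bijection with $NCP(k-1) \times NCP(n-k)$. Running the Wick formula backwards on each factor yields
$$ \tau(x_{j_1}\cdots x_{j_n} x_i) = \sum_{k=1}^{n} \delta_{j_k, i}\, \tau(x_{j_1}\cdots x_{j_{k-1}})\, \tau(x_{j_{k+1}}\cdots x_{j_n}). $$

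For the right-hand side, the definition of $\partial_i$ gives directly
$$ \partial_i Q = \sum_{k=1}^{n} \delta_{j_k, i}\, X_{j_1}\cdots X_{j_{k-1}} \otimes X_{j_{k+1}}\cdots X_{j_n}, $$
and applying $\tau \otimes \tau$ after evaluation at $x$ produces exactly the same expression, completing the proof. There is essentially no serious obstacle here; the only nontrivial combinatorial point is the splitting of non-crossing pair partitions by the partner of the last element, which is the classical recursive structure underlying the Catalan recursion for the moments of a semicircular law. As an aside, one could alternatively use the Fock space construction recalled in the previous subsection: writing $x_i = l_i + l_i^*$ and using $l_i^* \Omega = 0$, one obtains $\tau(Q(x) x_i) = \langle Q(x) e_i, \Omega\rangle$, and tracking which annihilation operator appearing in $Q(x)$ eventually destroys the vector $e_i$ yields the same decomposition indexed by positions $k$ with $j_k = i$.
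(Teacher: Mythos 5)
The paper does not supply its own proof of this proposition; it refers the reader to Lemma 5.4.7 of \cite{alice}. Your argument is correct and self-contained: reducing to a monomial $Q = X_{j_1}\cdots X_{j_n}$ by linearity, you invoke the moment formula for free semicircular variables (recalled in the paper from Definition 8.15 of \cite{nica_speicher_2006}; for a standard system $\tau(x_ux_v)=\delta_{u,v}$, so it reads $\tau(x_{k_1}\cdots x_{k_m}) = \sum_{\pi\in NCP(m)}\prod_{(p,q)\in\pi}\delta_{k_p,k_q}$), split the sum over $NCP(n+1)$ according to the partner $k$ of the last element, and use the fact that non-crossingness then confines all remaining pairs to $\{1,\dots,k-1\}$ or $\{k+1,\dots,n\}$, giving the bijection with $NCP(k-1)\times NCP(n-k)$. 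This yields
\begin{equation*}
\tau(x_{j_1}\cdots x_{j_n}x_i)=\sum_{k=1}^n \delta_{j_k,i}\,\tau(x_{j_1}\cdots x_{j_{k-1}})\,\tau(x_{j_{k+1}}\cdots x_{j_n}) = \tau\otimes\tau\left(\partial_i Q(x)\right),
\end{equation*}
the last equality being exactly the definition of $\partial_i$. The Fock-space variant you sketch, using $x_i\Omega = e_i$ and $l_i^*\Omega=0$ to reduce $\tau(Q(x)x_i)$ to $\langle Q(x)e_i,\Omega\rangle$, is also valid. Your combinatorial route is in fact the natural one in the context of this paper: Proposition \ref{edgtn} is a substantial generalization of precisely this splitting-by-partner structure of non-crossing pair partitions, so your proof foreshadows the machinery developed later, whereas the cited lemma in \cite{alice} obtains the Schwinger--Dyson relation within the framework of that book rather than by a direct combinatorial count.
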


\subsection{Definition of our random matrices}

Let us now recall the definition of Gaussian and Haar unitary random matrices and stating a few useful properties about them. 

\begin{defi}
	\label{3GUEdef}
	A \textbf{GUE random matrix} $X^N$ of size $N$ is a self-adjoint matrix whose coefficients are random variables with the following laws:
	\begin{itemize}
		\item For $1\leq i\leq N$, the random variables $\sqrt{N} X^N_{i,i}$ are independent centered Gaussian random variables of 
		variance $1$.
		\item For $1\leq i<j\leq N$, the random variables $\sqrt{2N}\ \Re{X^N_{i,j}}$ and $\sqrt{2N}\ \Im{X^N_{i,j}}$ are independent 
		centered Gaussian random variables of variance $1$, independent of  $\left(X^N_{i,i}\right)_i$.
	\end{itemize}
\end{defi}

\begin{defi}
	\label{2Haardef}
	A \textbf{Haar unitary random matrix} of size $N$ is a random matrix distributed according to the Haar measure on the group of unitary matrices of size $N$.
\end{defi}

Now to finish this subsection we state a property that we use several times in this paper. For the proof we refer to \cite[Proposition 2.11]{un}.

\begin{prop}
	\label{3bornenorme}
	There exist constants $C,D$ and $\alpha$ such that for any $N\in\N$, if $X^N$ is a GUE random matrix of size $N$, then for any 
	$u\geq 0$,
	
	$$ \P\left(\norm{X^N}\geq u+D \right) \leq e^{-\alpha u N} . $$
	
	\noindent Consequently, for any $k\leq \alpha N /2 $,
	
	$$ \E\left[\norm{X^N}^k\right] \leq C^k .$$
	
\end{prop}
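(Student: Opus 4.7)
The plan is a standard two-step argument combining Gaussian concentration of measure with a rough a priori bound on $\E\norm{X^N}$. Parametrizing $X^N$ by the underlying independent real Gaussians $(g_{r,r})_r$ and $(g_{r,s}, h_{r,s})_{r<s}$ of Definition \ref{3GUEdef}, a direct computation gives
$$\norm{X^N}_F^{\,2} = \sum_r (X^N_{r,r})^2 + 2\sum_{r<s} |X^N_{r,s}|^2 = \frac{1}{N} \norm{g}_2^{\,2},$$
so the linear map $g \mapsto X^N$ is $(1/\sqrt{N})$-Lipschitz from Euclidean norm to Frobenius norm. Composing with the $1$-Lipschitz map $X \mapsto \norm{X}$ (Lipschitz in Frobenius norm, since the operator norm is dominated by it), the function $g \mapsto \norm{X^N}$ is $(1/\sqrt{N})$-Lipschitz, and the Tsirelson-Ibragimov-Sudakov Gaussian concentration inequality yields
$$\P\bigl( \bigl| \norm{X^N} - \E\norm{X^N} \bigr| \geq t \bigr) \leq 2 e^{-Nt^2/2}, \qquad t \geq 0.$$

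The remaining ingredient is a universal upper bound $\E\norm{X^N} \leq D_0$. The cleanest route is the moment method: since $\norm{X^N}^{2k} \leq \tr((X^N)^{2k})$, Jensen's inequality gives
$$\E\norm{X^N} \leq \bigl( \E \tr((X^N)^{2k}) \bigr)^{1/(2k)} \leq (N\, c^k)^{1/(2k)},$$
using the classical estimate $\E\ts_N((X^N)^{2k}) \leq c^k$ for a universal constant $c$, which comes from the Wick/Harer-Zagier enumeration of pair partitions and whose dominant term is the $k$-th Catalan number, bounded by $4^k$. Choosing $k = \lceil \log N \rceil$ gives $\E\norm{X^N} \leq D_0$ for a universal $D_0$ independent of $N$.

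Combining the two steps with $D := D_0 + 2$ and $\alpha := 1$, for any $u \geq 0$ the event $\norm{X^N} \geq D+u$ forces $\norm{X^N} - \E\norm{X^N} \geq 2+u$, whence $\P(\norm{X^N} \geq D+u) \leq 2 e^{-N(2+u)^2/2} \leq e^{-Nu}$ for $N \geq 1$ (a short calculation using $(2+u)^2 \geq 4 + 4u$ absorbs the prefactor of $2$). This is the first inequality. The moment bound follows by tail integration: writing $\E\norm{X^N}^k = \int_0^\infty k u^{k-1} \P(\norm{X^N} \geq u)\,du$, splitting at $u=D$ and changing variables $v = u-D$ in the tail yields
$$\E\norm{X^N}^k \leq D^k + \int_0^\infty k (D+v)^{k-1} e^{-\alpha v N}\,dv.$$
Expanding $(D+v)^{k-1} \leq 2^{k-1}(D^{k-1} + v^{k-1})$ and using $\int_0^\infty k v^{k-1} e^{-\alpha v N}\,dv = k!/(\alpha N)^k \leq 1$ whenever $k \leq \alpha N/2$, the right-hand side is bounded by $(2D)^k$, so $C := 2D$ suffices. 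The only genuinely nontrivial step is the a priori control of $\E\norm{X^N}$; everything else is a routine application of Gaussian concentration and tail integration.
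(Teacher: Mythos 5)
The paper does not prove this proposition --- it simply cites \cite[Proposition~2.11]{un} --- so there is no in-paper argument to compare against. Your proof is the standard concentration-plus-a-priori-expectation-bound argument, and it is correct as a route to the result; it is essentially what the cited reference establishes.

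One point of wording deserves caution. You invoke ``the classical estimate $\E\ts_N((X^N)^{2k})\leq c^k$ for a universal constant $c$'' as if it held for all $k$ and $N$. It does not: for fixed $N$ and $k\to\infty$ the left side grows like $(2k-1)!!$ (e.g.\ take $N=1$, where $\E\ts_1((X^1)^{2k})$ is a Gaussian moment), which outstrips any $c^k$. What is true, and what you actually use, is that the genus expansion or the Harer--Zagier recursion gives $\E\ts_N((X^N)^{2k})\leq C_k\,e^{ck^2/N^2}$ (this is precisely Equation~\eqref{ljfdxrv} of the paper with the operator norm replaced by the trace moment), so the bound $c^k$ holds whenever $k^2/N^2$ is bounded, in particular for $k=\lceil\log N\rceil$ as you choose. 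Since your application only calls for that range of $k$, the argument goes through; you should just state the moment estimate with the restriction $k=O(\log N)$ (or $k^2=O(N^2)$) rather than claiming it universally. The remaining steps --- the Frobenius-norm computation showing $g\mapsto X^N$ is $1/\sqrt N$-Lipschitz, Gaussian concentration, the choice $D=D_0+2$ and $\alpha=1$ with the absorption of the factor $2$, and the tail-integration bound for $\E\norm{X^N}^k$ using $k!/(\alpha N)^k\leq 2^{-k}$ when $k\leq\alpha N/2$ --- are all correct and give $C=2D$ as you claim.
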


\subsection{Circle sets}

The notion of circle set was invented for this paper. Before giving the heuristic that led us to introduce this notation,let us give its rigorous definition.

\begin{defi}
	A set $E$ with $n$ elements is said to be a \textbf{circle set} if its elements can be ordered in a list $\{k_1,\dots,k_n\}$. Then if $i\in E$ is the $l$-th element of $E$, one defines $ i^+ := k_{l+1}$ and $i^-:=k_{l-1}$ with convention $k_{n+1}=k_1$ and $k_0=k_n$.
\end{defi}

For example, a natural example of circle set with $n$ elements is the set $\{e^{\frac{2\i \pi j}{n}}\ |\ j\in[1,n]\}$. There is a natural ordering of the points since given a point on the unit circle, one can find the next one by moving in the counter-clockwise direction. Note that the starting point of a circle set $E$, i.e. the element $k_1$ in a given ordering $\{k_1,\dots,k_n\}$ of $E$, usually does not matter. Unless we specifically mention the opposite, one could use the ordering $\{k_l,\dots,k_n,k_{1},\dots,k_{l-1}\}$ interchangeably. Following those remarks we define a ``canonical'' circle set with $n$ elements.

\begin{defi}
	Let $n>0$, then we denote by $((1,n))$ the circle set,
	$$\{e^{\frac{2\i \pi j}{n}}\ |\ j\in[1,n]\},$$
	whose order is given by the counter-clockwise direction.
\end{defi}

Note that since a circle set is ordered, one can define a notion of interval. We will denote an open interval by $]i,j[$ rather than $(i,j)$ since the later will simply be an (ordered) pair of element.

\begin{defi}
	Let $E$ ba a circle set, $i\neq j\in E$, the \textbf{interval} $]i,j[\subset E$ (respectively $[i,j]$) is defined with the following construction, let $k_0=i$ and $k_{l+1}=k_l^+$, then if $m$ is such that $k_m=j$, we set $]i,j[ = \{k_1,\dots,k_{m-1}\}$ (respectively $[i,j] = \{k_0,\dots,k_{m}\}$).
	
\end{defi}

Finally, we will need to be able to be able to take a $l$-tuple of element of $E$ while following the ordering, which is why we introduce the following notation.

\begin{defi}
	Given $E,F$ circle sets, the set $(k_{j})_{j\in F}\in E$ is defined as such,
	$$ \{(k_{j})_{j\in F} \in E^F\ |\ \forall j\in F, k_j\in ]k_{j^-},k_{j^+}[\}. $$
\end{defi}

Note that for example, if $E=((1,10))$ and $F=((1,3))$, then $(2,6,10)$ and $(10,2,6)$ are distinct elements of $(k_{j})_{j\in F}\in E$, but $(6,2,10)$ is not. 

Finally, we need to define a way to add points to a given circle set, we do so with the following definition.

\begin{defi}
	Given $E,F$ circle sets, the set $[l_j]_{j\in F}\in E$ is defined by induction.
	\begin{itemize}
		\item First for any given $j_1\in F$, we define a collection of circle set $E^1$ defined as such, given $i\in E$, we build a circle set whose elements are $E$ and an extra point $l_{j_1}$, and whose ordering is the same one as $E$ with the exception that $l_{j_1}$ is in between $i$ and $i^+$.
		\item To build $E^{m+1}$, we set $j_{m+1}= j_m^+$, then given $F\in E^{m}$, $i\in [l_{j_m}, l_{j_1}[$, we build a new circle set whose elements are $F$ and an extra point $l_{j_{m+1}}$, and whose ordering is the same one as $F$ with the exception that $l_{j_{m+1}}$ is in between $i$ and $i^+$.
		\item we stop this process when $j_m^+=j_1$.
	\end{itemize}
	Finally the set $[l_j]_{j\in F}\in E$ is the collection of added points $l_{j_1},\dots,l_{j_m}$ with possible orderings given by $E^m$.
\end{defi}

\subsection{The set of configurations}

Since the set of non-crossing partitions of a given set $E=\{k_1,\dots,k_n\}$ is the same as the one of the set $\{k_l,\dots,k_n,k_{1},\dots,k_{l-1}\}$, there is a natural link between circle sets and non-crossing partition. In this paper we will use a variation of non-crossing pair partitions where we allow elements to belong to several subset or to none.

\begin{defi}
	Given $E$ a circle set, a \textbf{configuration} $K$ of $E$ is a subset of $(\{i,j\})_{i,j\in E, i\neq j}$ (with the convention $\{i,j\}=\{j,i\}$) such that for all $\{i,j\}, \{k,l\} \in K$, one has that either $i,j\in [k,l]$ or $i,j\in [l,k]$.
	
	We denote by $\K(E)$ the \textbf{set of configuration} of $E$, and by $\K_n$ the one of $((1,n))$. Besides, one also sets,
	$$ \widehat{K} = \{i\in E\ |\ \forall j\in E, \{i,j\}\notin K \}.$$
\end{defi}

Practically speaking, $\widehat{K}$ is the set of elements of $E$ which in the configuration $K$ are not linked with any other element of $E$. The main reason we introduce the notion of circle set is due to its link with configurations. Notably it allows us to define the following object.

\begin{defi}
	\label{cgoijdvokds}
	Given a circle set $E$ and a configuration $K\in \K(E)$, for any $i\in E$, we set 
	$$E_i(K)=\{j\in E\ |\ \{i,j\}\in K \} \cup \{i\}.$$
	This set inherit the ordering of $E$, indeed we can write $E_i=\{k_1,\dots k_m\}$ with $k_1=i$ and for all $l\in [1,m]$, $k_l\in E_i$ is chosen such that $k_l\in ]k_{l-1},k_{l+1}[$ with convention $k_0=k_m$ and $k_{m+1}=k_1$. Finally we reverse this ordering and we set 
	$$ C_i(K) = \{k_m,k_{m-1},\dots,k_2,k_1\}.$$
\end{defi}

In practice, we will not use the set $E_i(K)$, however the set $C_i(K)$ will be used in most of the proofs. We will also need a variation of this set with the following definition.

\begin{defi}
	Given a circle set $E$ and a configuration $K\in \K(E)$, we define 	
\begin{equation}
	C_i^*(K) =
	\begin{cases}
		C_i(K)\setminus\{i\} & \text{if } C_i(K)\neq\{i\}\\
		\{i\} & \text{otherwise.}
	\end{cases}       
\end{equation}
	We also set $ C_i^{**}(K) = C_i(K)\setminus\{i,i^-\}$ where $i^-$ is chosen with respect to the new ordering of $C_i(K)$. More precisely, with the notation of Definition \ref{cgoijdvokds}, 
	$$ C_i^{**}(K) = \{k_m,k_{m-1},\dots,k_3\}.$$
\end{defi}

\section{Proof of the main lemmas}

The aim of this section is to prove several lemmas which we will then use in the proof of our main results in the next section.

\subsection{An asymptotic expansion for scalar polynomials}

In this subsection, we focus on adapting the main results of \cite{trois} with the notations of this paper. We also reformulate the formula so that one can handle them more easily with the results of the next subsection. Although this step comes first in the proof of our main theorems, we invite the reader to skip this subsection at first and to focus instead on the next one, notably Proposition \ref{edgtn} and Lemma \ref{sdtyuikmn}, to better understand why we reformulate the expansion of \cite{trois} with those new notations.

\begin{lemma}
	\label{3apparition}
	Let $x,x^1,\dots$ be free semicircular systems of $d$ variables, free between each other. Then with $T_n = \{t_1,\dots,t_{2n}\}$ a sequence of non-negative number,  $T_n = \{\widetilde{t}_1,\dots,\widetilde{t}_{2n}\}$ the same set but ordered by increasing orders, and $I = \{I_1,\dots,I_{2n}\}\in (\N\setminus\{0\})^{2n}$, with $t_0=0$, we set
	$$ x_{i,I}^{T_{n}} = \sum_{l=1}^{2n} (e^{-\widetilde{t}_{l-1}} -e^{-\widetilde{t}_l})^{1/2} x^{I_{l}}_i + e^{- \widetilde{t}_{2n}/2} x_i . $$
	
	\noindent There exists families $J_n\subset (\N\setminus\{0\})^{2n}$, as well as families $J_{n+1}^{l,1},J_{n+1}^{l,2},\widetilde{J}_{n+1}^{l,1},\widetilde{J}_{n+1}^{l,2} \subset J_{n+1}$ with a natural bijection between each of them and $J_n$, such that if we define the following subfamily of $(X_{i,I})_{i\in [1,d], I\in J_{n+1}}$,
	$$X_{l,1} = \left(X_{i,I}\right)_{i\in [1,d], I\in J_{n+1}^{l,1}}\cup (Z_j)_{1\leq j\leq q},\quad X_{l,2} = \left(X_{i,I}\right)_{i\in [1,d],I\in J_{n+1}^{l,2}}\cup (Z_j)_{1\leq j\leq q}, $$
	$$\widetilde{X}_{l,1} = \left(X_{i,I}\right)_{i\in [1,d],I\in \widetilde{J}_{n+1}^{l,1}} \cup (Z_j)_{1\leq j\leq q},\quad \widetilde{X}_{l,2} = \left(\widetilde{X}_{i,I}\right)_{i\in [1,d],I\in \widetilde{J}_{n+1}^{l,2}}\cup (Z_j)_{1\leq j\leq q},$$
	
	\noindent since there is a natural bijection between $J_n$ and $J_{n+1}^{l,1}$, one can evaluate a polynomial in $(X_{i,I})_{i\in [1,d], I\in J_{n}} \cup (Z_j)_{1\leq j\leq q}$ in the variables $X_{l,1}$, and similarly for $X_{l,2},\widetilde{X}_{l,1}$ and $\widetilde{X}_{l,2}$. Besides, we then define the following operators, for $s$ from $1$ to $2n+1$, for $W$ a monomial in $(X_{i,I})_{i\in [1,d], I\in J_{n+1}}$,
	\begin{align}
		\label{xfgukokgc}
		&L_s^{n+1}(Q) := \frac{1}{2} \sum_{\substack{1\leq i,j\leq d \\ I,I',J,J'\in J_n}}	\sum_{\substack{W\sim AX_{j,J'}BX_{i,I'}CX_{j,J}DX_{i,I} \\\text{where } J\underset{\mathclap{{}^{s}}}{\sim}J'}} B\left(X_{s,1}\right) A(\widetilde{X}_{s,1}) D(\widetilde{X}_{s,2}) C\left(X_{s,2}\right),
	\end{align}
	
	\noindent where $W\sim W'$ if there exists $P,Q$such that $W=PQ$ and $W'=QP$, and $I \underset{\mathclap{{}^{s}}}{\sim} I'$ if $I_s=I_s'$. Note that since $I\in J_n$ only has $2n$ elements, the condition "$I,I'\in J_n$, such that $I_{2n+1}=I_{2n+1}'$" is satisfied for any $I,I'$. Finally, if $T_{n+1}$ is a set of $2n+2$ numbers, with $\widetilde{T}_{n} = \{\widetilde{t}_1,\dots,\widetilde{t}_{2n}\}$ the set which contains the first $2n$ elements of $T_{n+1}$ but  sorted by increasing order, we set 
	\begin{equation}
		\label{xdffgyui}
		L^{T_{n+1}}(Q) := \1_{[\widetilde{t}_{2n},t_{2n+2}]}(t_{2n+1}) L^{n+1}_{2n+1}(Q) + \sum_{1\leq s\leq 2n} \1_{[\widetilde{t}_{s-1},\widetilde{t}_s]}(t_{2n+1}) L_s^{n+1}(Q).
	\end{equation}
	
	\noindent Then, given $Q\in \C\langle X_1,\dots,X_d,Z_1,\dots,Z_{q}\rangle$,
	\begin{align*}
		&\E\left[\tau_N\Big(Q\left(X^N,Z^N\right)\Big)\right] = \sum_{p= 0}^{\deg Q /4} \frac{1}{N^{2p}} \int_{A_p} e^{-t_1-\cdots-t_{2p}}\tau\Big( \left(L^{{T}_p} \dots L^{{T}_1}\right)(Q) \left(x^{T_p},Z^N\right) \Big)\  dt_1\dots dt_{2p}.
	\end{align*}
	where $X^N$ is a $d$-tuple of independent GUE random matrices, $Z^N$ is a family of deterministic matrices, and
	$$A_p = \{ t_{2p}\geq t_{2p-2}\geq \dots \geq t_2\geq 0 \}\cap\{\forall s\in [1,p], t_{2s} \geq t_{2s-1} \geq 0\} \subset \R^{2p}.$$	
\end{lemma}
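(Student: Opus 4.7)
The plan is to derive the expansion by iterating the Ornstein--Uhlenbeck interpolation argument of \cite{trois} and then re-expressing the output in the new indexing notation. I would argue by induction on the level $n$, the inductive hypothesis being that
\[
\E\left[\tau_N\left(Q(X^N,Z^N)\right)\right] = \sum_{p=0}^{n-1} \frac{1}{N^{2p}} \int_{A_p} e^{-t_1-\cdots-t_{2p}}\tau\left( \left(L^{T_p}\cdots L^{T_1}\right)(Q)\left(x^{T_p},Z^N\right)\right) dt_1\cdots dt_{2p} + \frac{1}{N^{2n}} \E\left[\tau_N(\mathcal{R}_n)\right],
\]
where $\mathcal{R}_n$ is a polynomial in the variables indexed by $J_n$ of degree at most $\deg Q - 4n$, evaluated on the interpolated matrices. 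The base case $n=0$ is the tautology $\mathcal{R}_0 = Q$.

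The heart of the proof is the step from level $n$ to level $n+1$. Along an Ornstein--Uhlenbeck path that continuously deforms the level-$n$ semicircular family into the level-$(n+1)$ family by introducing two fresh independent systems and two new time variables $t_{2n+1},t_{2n+2}$, I would differentiate the trace with respect to each new time variable and apply the Gaussian integration by parts formula, i.e., the matrix-level Schwinger--Dyson identity behind Proposition~\ref{3SDE}. Each differentiation pulls down a non-commutative derivative $\partial_i$ summed over the positions of $X_i$ in the monomial, weighted by the matrix covariance $1/N$; two differentiations together produce the $1/N^2$ prefactor and a sum over pairs of positions, i.e., the quadruple $A X_{j,J'} B X_{i,I'} C X_{j,J} D X_{i,I}$ featuring in \eqref{xfgukokgc}. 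Using trace cyclicity to rearrange the four resulting fragments in the order $B\cdot A\cdot D\cdot C$ (with the tilded vs.\ non-tilded copies assigned according to which of the two differentiations introduced them) produces exactly the operator $L_s^{n+1}$. The index $s$ records which sub-interval $[\widetilde{t}_{s-1},\widetilde{t}_s]$ the new time $t_{2n+1}$ falls in, which is the origin of the piecewise form \eqref{xdffgyui}.

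Unfolding the ordering constraints on the successive time variables across levels assembles the simplex $A_p$, and the procedure terminates after finitely many steps because each application of $L_s^{n+1}$ removes four occurrences of the generators; hence $\mathcal{R}_{\lfloor\deg Q/4\rfloor+1}$ is identically zero and the sum truncates at $p=\lfloor\deg Q/4\rfloor$, giving the exact finite expansion claimed in the lemma.

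The main obstacle is combinatorial and notational rather than analytical: one must fix the index sets $J_{n+1}^{s,1},J_{n+1}^{s,2},\widetilde{J}_{n+1}^{s,1},\widetilde{J}_{n+1}^{s,2}\subset J_{n+1}$ and the bijections with $J_n$ in such a way that (i) the composed operators $L^{T_p}\cdots L^{T_1}$ are unambiguously defined as maps between the correct polynomial rings, (ii) the decomposition \eqref{xdffgyui} genuinely partitions the range of $t_{2n+1}\in[0,t_{2n+2}]$, and (iii) the cyclic equivalence relation $W\sim\cdot$ used in \eqref{xfgukokgc} selects each quadruple of positions exactly once. Once these choices are made coherently across levels, the analytical content is precisely the Ornstein--Uhlenbeck/Schwinger--Dyson computation of \cite{trois} rephrased in the new language.
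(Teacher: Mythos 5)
Your proposal is a re-derivation of the expansion from first principles, while the paper's proof is deliberately \emph{not} that: it opens with the sentence that the lemma ``is a reformulation of Lemma 3.6 and Proposition 3.7 of \cite{trois},'' directly quotes the expansion
\[
\E\left[ \ts_N\Big( Q(X^N,Z^N) \Big) \right] =\sum_{i\geq 0} \frac{1}{N^{2i}} \int_{A_i } \int_{[0,1]^{4i}} \tau\Big( \left(L^{{T}_i}_{\rho_i,\beta_i,\gamma_i,\delta_i} \cdots L^{{T}_1}_{\rho_1,\beta_1,\gamma_1,\delta_1}\right)(Q) (x^{T_i},Z^N) \Big)\ d\rho\, d\beta\, d\gamma\, d\delta\ dt
\]
from that paper, and then spends its effort exclusively on the algebraic translation. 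The non-trivial content of the paper's proof is the observation that when $Q$ is a polynomial (as opposed to a smooth function, the setting of \cite{trois}), the operators $L_s^{n,\rho,\beta,\gamma,\delta}$ no longer depend on the auxiliary parameters $\rho,\beta,\gamma,\delta\in[0,1]$, so the $[0,1]^{4i}$ integral collapses and one gets $L_s^{n,\rho,\beta,\gamma,\delta}(Q) = L_s^{n+1}(Q)$; the rest of the proof is a routine unfolding of the maps $D_i$, $\partial_i$, $\partial_{j,J}$, and $\boxtimes$ from \cite{trois} into the cyclic-rearrangement formula \eqref{xfgukokgc}. Your outline instead proposes to re-run the Ornstein--Uhlenbeck interpolation and Gaussian integration by parts from scratch, which in principle also gets there and in fact avoids ever introducing the $\rho,\beta,\gamma,\delta$ variables when restricted to polynomials; but this amounts to reproving the main expansion theorem of \cite{trois}, which is a substantially longer undertaking than the paper's one-page reformulation. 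Moreover, your last paragraph identifies the ``combinatorial and notational'' bookkeeping (pinning down $J_{n+1}^{l,1},\dots$, the bijections, and why \eqref{xdffgyui} partitions $[0,t_{2n+2}]$) as the main obstacle but defers it rather than resolving it — this is precisely the part the paper's proof carries out explicitly by matching against the already-established formulas from \cite{trois}. So the approach is sound in outline but goes the long way around, and leaves the step that the paper actually proves largely as a promise.
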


Note that some notations in the Lemma above have not been written explicitly, such as for example the construction of the family $J_n$. However its definition will not be of much use in this paper and hence for the sake of limiting the size of the previous theorem we choose to only explicit what is strictly necessary.

\begin{proof}
	This lemma is a reformulation of Lemma 3.6 and Proposition 3.7 of \cite{trois}. With the notations of this paper, one has that
	\begin{align*}
		&\E\left[ \ts_N\Big( Q(X^N,Z^N) \Big) \right] =\sum_{i\geq 0} \frac{1}{N^{2i}} \int_{A_i } \int_{[0,1]^{4i}} \tau\Big( \left(L^{{T}_i}_{\rho_i,\beta_i,\gamma_i,\delta_i} \dots L^{{T}_1}_{\rho_1,\beta_1,\gamma_1,\delta_1}\right)(Q) (x^{T_i},Z^N) \Big)\ d\rho\, d\beta\, d\gamma\, d\delta\  dt,
	\end{align*}
	where
	\begin{align*}
		L^{T_{n+1}}_{\rho_{n+1},\beta_{n+1},\gamma_{n+1},\delta_{n+1}}(Q) :=  e^{-t_{2n+2}-t_{2n+1}} \Bigg(& \1_{[\widetilde{t}_{2n},t_{2n+2}]}(t_{2n+1}) L^{n,\rho_{n+1},\beta_{n+1},\gamma_{n+1},\delta_{n+1}}_{2n+1}(Q) \\
		& + \sum_{1\leq s\leq 2n} \1_{[\widetilde{t}_{s-1},\widetilde{t}_s]}(t_{2n+1}) L_s^{n,\rho_{n+1},\beta_{n+1},\gamma_{n+1},\delta_{n+1}}(Q) \Bigg),
	\end{align*}
	and
	\begin{align*}
		&L_s^{n,\rho_{n+1},\beta_{n+1},\gamma_{n+1},\delta_{n+1}}(Q) \\
		&:= \frac{1}{2} \sum_{\substack{1\leq i,j\leq d \\ I,J\in J_n\\ \text{such that } I_s=J_s}} \Big(\partial_{\delta_{n+1},j,I}^2\left( \partial_{\beta_{n+1},i}^1 D_{\rho_{n+1},i} Q\right)\left(X_{s,1}\right) \boxtimes \partial_{\delta_{n+1},j,I}^1 \left( \partial_{\beta_{n+1},i}^1 D_{\rho_{n+1},i} Q\right)\left(\widetilde{X}_{s,1}\right) \Big) \\
		&\quad\quad\quad\quad\quad\quad\quad\quad \boxtimes \Big(\partial_{\gamma_{n+1},j,J}^2\left( \partial_{\beta_{n+1},i}^2 D_{\rho_{n+1},i} Q\right)\left(\widetilde{X}_{s,2}\right) \boxtimes \partial_{\gamma_{n+1},j,J}^1 \left( \partial_{\beta_{n+1},i}^2 D_{\rho_{n+1},i} Q\right)\left(X_{s,2}\right) \Big).
	\end{align*} 
	First and foremost, since $Q$ will always be a polynomial in this paper, one has that
	$$ L_s^{n,\rho_{n+1},\beta_{n+1},\gamma_{n+1},\delta_{n+1}}(Q) = L_s^{n+1}(Q), $$
	where
	\begin{align*}
		L_s^{n+1}(Q) &:= \frac{1}{2} \sum_{\substack{1\leq i,j\leq d \\ J,J'\in J_n\\ \text{such that } J_s=J_s'}} \Big(\partial_{j,J}^2\left( \partial_{i}^1 D_{i} Q\right)\left(X_{s,1}\right) \boxtimes \partial_{j,J}^1 \left( \partial_{i}^1 D_{i} Q\right)\left(\widetilde{X}_{s,1}\right) \Big) \\
		&\quad\quad\quad\quad\quad\quad\quad\quad \boxtimes \Big(\partial_{j,J'}^2\left( \partial_{i}^2 D_{i} Q\right)\left(\widetilde{X}_{s,2}\right) \boxtimes \partial_{j,J'}^1 \left( \partial_{i}^2 D_{i} Q\right)\left(X_{s,2}\right) \Big).
	\end{align*} 
	Besides, one has the following equalities, for $Q,R,A$ monomials,
	$$ D_iQ = \sum_{I\in J_n}\sum_{Q\sim RX_{i,I}} R, $$
	$$ \partial_iR = \sum_{I'\in J_n}\sum_{R= A X_{i,I'} B} A\otimes B, $$
	$$ \partial_{j,J}A = \sum_{A= U X_{j,J} V} U\otimes V, $$
	$$ (A\boxtimes B)\boxtimes (C\boxtimes D) = BADC.$$
	Consequently, we have that
	\begin{align*}
		L_s^{n+1}(Q) &:= \frac{1}{2} \sum_{\substack{1\leq i,j\leq d \\ I,I',J,J'\in J_n}} \sum_{\substack{W\sim AX_{j,J'}BX_{i,I'}CX_{j,J}DX_{i,I} \\ \text{where } J\underset{\mathclap{{}^{s}}}{\sim}J'}} B\left(X_{s,1}\right) A\left(\widetilde{X}_{s,1}\right) D\left(\widetilde{X}_{s,2}\right) C\left(X_{s,2}\right).
	\end{align*} 
	Thus with $L^{T_{i}}$ as in Equation \eqref{xdffgyui},
	\begin{align*}
		&\E\left[ \ts_N\Big( Q(X^N,Z^N) \Big) \right] =\sum_{i\geq 0} \frac{1}{N^{2p}} \int_{A_p } e^{-t_1-\cdots-t_{2p}} \tau\Big( \left(L^{{T}_p} \dots L^{{T}_1} \right)(Q) (x^{T_p},Z^N) \Big)\ dt,
	\end{align*}
	Finally since $L^{T_{i}}$ differentiates a polynomial four times, if $p>\deg(Q)/4$, then $\left(L^{{T}_p} \dots L^{{T}_1} \right)(Q) =0$.
	
\end{proof}

\begin{lemma}
	
	\label{poiuytrew}
	There exist a constant $c_{s_1,\dots,s_p}\leq (4p-1)!$, $J^l$ a $4p$-tuple of elements of $J_{p}$ as well as permutations $\pi_l^{s_1,\dots,s_p},\sigma_l^{s_1,\dots,s_p}\in \mathbb{S}_{4p}$ which we will denote by $\pi_l,\sigma_l$ for the sake of brevity, such that with convention $k_{4p+1}=k_1$, for any monomial $A=M_1(Z)X_{i_1}M_2(Z)\dots M_k(Z)X_{i_k}M_{k+1}(Z)$, where $M_j$ is a monomial in $Z$,
	\begin{align}
		\label{asdfghjkl}
		\left(L^{p}_{s_p} \dots L^{1}_{s_1}\right)(A) &= \frac{1}{2^p} \sum_{ (k_1,\dots,k_{4p}) \in ((1,k))} \sum_{l=1}^{c_{s_1,\dots,s_p}}\ \delta_{i_{k_{\pi_l(1)}} = i_{k_{\pi_l(2)}}} \dots \delta_{i_{k_{\pi_l(4p-1)}} = i_{k_{\pi_l(4p)}}}\\
		&\quad\quad\quad\quad\quad\quad\quad\quad\quad\quad\quad\quad A_{(k_{\sigma_l(1)},k_{\sigma_l(1)+1})}\left(X_{J^l_1}\right)\cdots A_{(k_{\sigma_l(4p)},k_{\sigma_l(4p)+1})}\left(X_{J_{4p}^l}\right), \nonumber
	\end{align}
	where $A_{(u,v)} = M_u(Z)X_{i_{u+1}}\dots X_{i_{v+1}} M_v(Z)$ and $X_I=(X_{i,I})_{1\leq i\leq d} \cup (Z_j)_{1\leq j\leq q}$. 
	
	Thus one can find $\widehat{c}_{s_1,\dots,s_p}\leq (4p)!$, $\widehat{J}^l$ a $4p$-tuple of elements of $J_{p}$ as well as permutations $\widehat{\pi}_l^{s_1,\dots,s_p},$ $\widehat{\sigma}_l^{s_1,\dots,s_p}\in \mathbb{S}_{4p}$ which we will denote by $\widehat{\pi}_l,\widehat{\sigma}_l$ for the sake of brevity, such that for any polynomial $P$,
	\begin{align}
		\label{asdfghjkl2}
		\left(L^{p}_{s_p} \dots L^{1}_{s_1}\right)(P) &= \frac{1}{2^p} \sum_{1\leq i_1,\dots,i_{4p} \leq d} \sum_{l=1}^{\widehat{c}_{s_1,\dots,s_p}}\ \delta_{i_{\widehat{\pi}_l(1)} = i_{\widehat{\pi}_l(2)}} \dots \delta_{i_{\widehat{\pi}_l(4p-1)} = i_{\widehat{\pi}_l(4p)}} \\
		&\quad\quad\quad\quad\quad\quad\quad\quad\quad\quad\quad\quad\quad\quad\quad\quad\quad\quad\quad \ev_l\circ\widehat{\sigma}_l\circ m\circ\left(\partial_{i_1}\otimes \cdots \otimes \partial_{i_{4p}}\right)(P), \nonumber
	\end{align}
	where
	$$ m(A_1\otimes\cdots\otimes A_{4p+1}) = A_2\otimes \dots\otimes A_{4p}\otimes A_{4p+1}A_1,$$
	$$ \widehat{\sigma}_l(A_1\otimes\cdots\otimes A_{4p}) = A_{\sigma_l(1)}\otimes\cdots\otimes A_{\sigma_l(4p)},$$
	$$ \ev_l(A_1\otimes\cdots\otimes A_{4p}) = A_1\left(X_{\widehat{J}^l_1}\right)\cdots A_{4p}\left(X_{\widehat{J}_{4p}^l}\right).$$
\end{lemma}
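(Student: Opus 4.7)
The plan is to prove Equation~(\ref{asdfghjkl}) by induction on $p$, and then to deduce Equation~(\ref{asdfghjkl2}) from it by rewriting. The key structural observation is that each application of the operator $L^{n+1}_s$, as given explicitly in (\ref{xfgukokgc}), picks out four $X$-variables in its monomial argument, enforces two pairing constraints on their indices (coming from the duplicated indices $i$ and $j$), and rearranges the four pieces between these positions in a prescribed order, evaluated in four distinct copies of the free family. This is exactly the ``elementary move'' appearing in (\ref{asdfghjkl}); iterating it $p$ times should produce the full formula.

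\textbf{Base case.} For $p=1$ and a monomial $A = M_1 X_{i_1} M_2 \cdots X_{i_k} M_{k+1}$, I would unpack (\ref{xfgukokgc}) directly. The sum over $W \sim A X_{j,J'} B X_{i,I'} C X_{j,J} D X_{i,I}$ amounts to choosing a cyclically ordered four-tuple $(k_1, k_2, k_3, k_4) \in ((1,k))$ of $X$-positions of $A$, subject to the two pairing constraints $i_{k_1} = i_{k_3}$ (the common index $j$) and $i_{k_2} = i_{k_4}$ (the common index $i$). The pieces $A$, $B$, $C$, $D$ are then the four cyclic intervals of $A$ cut at these positions, and the output $B(X_{s,1}) A(\widetilde X_{s,1}) D(\widetilde X_{s,2}) C(X_{s,2})$ matches (\ref{asdfghjkl}) with $c_{s_1}=1$ and canonical choices of $\pi_1$, $\sigma_1$, $J^1$.

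\textbf{Inductive step.} Assuming (\ref{asdfghjkl}) at level $p$, I would apply $L^{p+1}_{s_{p+1}}$ to each term of its right-hand side. Each such term is a product of $4p$ monomials evaluated in (possibly different) free families, which I can read as a single monomial in the combined variables. The base-case analysis then selects four new $X$-positions in this product, imposes two new pairing constraints, and rearranges the four resulting new pieces. The crucial point is that every $X$ occurring inside a piece $A_{(k_{\sigma_l(u)}, k_{\sigma_l(u)+1})}$ corresponds to a unique $X$-position of the original monomial $A$; thus the four new picks determine four additional elements of $((1,k))$ beyond the $4p$ already chosen. Re-sorting the combined $4p+4$ positions into cyclic order and re-bundling the evaluation assignment, the new piece-permutation and the pairing pattern produces (\ref{asdfghjkl}) at level $p+1$. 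The coarse bound $c_{s_1,\ldots,s_p} \leq (4p-1)!$ follows by bounding, at each inductive step, the number of configurations (piece permutation, evaluation assignment, pairing pattern) by a factorial count modulo the cyclic symmetry of $((1,k))$.

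\textbf{From (\ref{asdfghjkl}) to (\ref{asdfghjkl2}); main obstacle.} Equation~(\ref{asdfghjkl2}) is then essentially a rewriting of (\ref{asdfghjkl}) in the language of iterated operator-valued noncommutative derivatives. Indeed, $\partial_{i_1} \otimes \cdots \otimes \partial_{i_{4p}}$ applied to $P$ precisely enumerates the ways to pick $4p$ ordered occurrences of $X_{i_1}, \ldots, X_{i_{4p}}$ inside $P$ and to split $P$ accordingly, so summing over $i_1, \ldots, i_{4p} \in [1,d]$ weighted by the pairing deltas $\delta_{i_{\widehat\pi_l(1)}=i_{\widehat\pi_l(2)}} \cdots$ recovers the cyclic-set sum of (\ref{asdfghjkl}). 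The cyclic identification (the first and last pieces glue up because $L$ sums over all cyclic decompositions $W\sim\cdots$) is captured by the map $m$; the reshuffling of the $4p$ pieces is encoded by $\widehat\sigma_l$; and the evaluations in distinct free families are performed by $\ev_l$. The main obstacle is purely combinatorial bookkeeping: tracking how permutations, evaluation assignments, and pairing deltas compose through the induction, especially because the cyclic rotations built into each application of $L$ can interleave newly chosen positions with previously chosen ones, requiring a re-sorting step before the data can be packaged into the form of (\ref{asdfghjkl}). No further analytical input beyond the explicit formula (\ref{xfgukokgc}) is needed.
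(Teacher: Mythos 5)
Your plan matches the paper's proof: both proceed by induction on $p$ using the explicit formula \eqref{xfgukokgc}, inserting four new positions into the cyclically ordered $4p$-tuple at each step, and then deduce \eqref{asdfghjkl2} by rewriting the circle-set sum in terms of iterated noncommutative derivatives together with the gluing map $m$. The one step you gloss over is that $\partial_{i_1}\otimes\cdots\otimes\partial_{i_{4p}}$ enumerates linearly ordered position tuples, whereas the sum $\sum_{(k_1,\dots,k_{4p})\in((1,k))}$ in \eqref{asdfghjkl} is over cyclic orderings, so the passage to \eqref{asdfghjkl2} requires an extra sum over the $4p$ choices of starting point together with a rotation permutation; this is precisely what promotes the bound $c_{s_1,\dots,s_p}\leq(4p-1)!$ to $\widehat{c}_{s_1,\dots,s_p}\leq(4p)!$.
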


\begin{proof}
	
	Thanks to Lemma \ref{3apparition} and notably Equation \eqref{xfgukokgc}, if 
	$$W=M_1(Z)X_{i_1,I_1}M_2(Z)\dots M_k(Z) X_{i_k,I_k}M_{k+1}(Z)$$
	has degree $k$, then
	\begin{align*}
		L_s^{n}(W) := \frac{1}{2} \sum_{(k_1,k_2,k_3,k_4)\in ((1,k)) } \delta_{I_{k_1}\underset{\mathclap{{}^{s}}}{\sim}I_{k_3}} & \delta_{i_{k_1}=i_{k_3}} \delta_{i_{k_2}=i_{k_4}} \\
		&W_{(k_2,k_3)}\left(X_{s,1}\right) W_{(k_1,k_2)}(\widetilde{X}_{s,1}) W_{(k_4,k_1)}(\widetilde{X}_{s,2}) W_{(k_3,k_4)}\left(X_{s,2}\right),
	\end{align*}
	Consequently, let us proceed by induction. We assume that Equation \eqref{asdfghjkl} is true for a given $p$, then with 
	$$ Q= A_{(k_{\sigma_l(1)},k_{\sigma_l(1)+1})}\left(X_{J^l_1}\right)\cdots A_{(k_{\sigma_l(4p)},k_{\sigma_l(4p)+1})}\left(X_{J^l_{4p}}\right),$$
	one has
	\begin{align*}
		&\sum_{(k_1,\dots,k_{4p})\in ((1,k))} \delta_{i_{k_{\pi_l(1)}} = i_{k_{\pi_l(2)}}} \dots \delta_{i_{k_{\pi_l(4p-1)}} = i_{k_{\pi_l(4p)}}} L_{s_{p+1}}^{p+1}\left(Q\right) \\
		&= \frac{1}{2} \sum_{(k_1,\dots,k_{4p})\in ((1,k))} \delta_{i_{k_{\pi_l(1)}} = i_{k_{\pi_l(2)}}} \dots \delta_{i_{k_{\pi_l(4p-1)}} = i_{k_{\pi_l(4p)}}} \sum_{(m_1,m_2,m_3,m_4)\in ((1,k))\setminus \{k_1,\dots,k_{4p}\} } \\
		&\quad\quad\quad\quad\quad\quad\quad \delta_{I_{m_1}\underset{\mathclap{{}^{s}}}{\sim}I_{m_3}} \delta_{i_{m_1}=i_{m_3}} \delta_{i_{m_2}=i_{m_4}} Q_{(m_2,m_3)}\left(X_{s,1}\right) Q_{(m_1,m_2)}(\widetilde{X}_{s,1}) Q_{(m_4,m_1)}(\widetilde{X}_{s,2}) Q_{(m_3,m_4)}\left(X_{s,2}\right) \\
		&= \frac{1}{2} \sum_{[l_1,l_2,l_3,l_4]\in ((1,4p))} \sum_{(k_1,\dots,k_{4p},k_{r_1},k_{r_2},k_{r_3},k_{r_4})\in ((1,k))} \delta_{J^l_{l_1}\underset{\mathclap{{}^{s}}}{\sim}J^l_{l_3}} \\
		&\quad\quad\quad\quad\quad\quad\quad\quad\quad\quad\quad\quad\quad\quad\quad\quad\quad\quad\quad\quad\quad \times \delta_{i_{k_{r_1}}=i_{k_{r_3}}} \delta_{i_{k_{r_2}}=i_{k_{r_4}}} \delta_{i_{k_{\pi_l(1)}} = i_{k_{\pi_l(2)}}} \dots \delta_{i_{k_{\pi_l(4p-1)}} = i_{k_{\pi_l(4p)}}} \\
		&\quad\quad\quad\quad\quad\quad\quad\quad\quad\quad\quad\quad\quad \times W_{(k_{\widetilde{\sigma}_l(l_2)},k_{\widetilde{\sigma}_l(l_2)^+})}\left(X_{s,1}\right) W_{(k_{\widetilde{\sigma}_l(l_2^+)},k_{\widetilde{\sigma}_l(l_2^+)^+})}\left(X_{s,1}\right) \dots 	W_{(k_{\widetilde{\sigma}_l(l_3^-)},k_{\widetilde{\sigma}_l(l_3^-)^+})}\left(X_{s,1}\right) \\
		&\quad\quad\quad\quad\quad\quad\quad\quad\quad\quad\quad\quad\quad \times W_{(k_{\widetilde{\sigma}_l(l_1)},k_{\widetilde{\sigma}_l(l_1)^+})}(\widetilde{X}_{s,1}) W_{(k_{\widetilde{\sigma}_l(l_1^+)},k_{\widetilde{\sigma}_l(l_1^+)^+})}\left(X_{s,1}\right)\dots W_{(k_{\widetilde{\sigma}_l(l_2^-)},k_{\widetilde{\sigma}_l(l_2^-)^+})}(\widetilde{X}_{s,1}) \\
		&\quad\quad\quad\quad\quad\quad\quad\quad\quad\quad\quad\quad\quad \times W_{(k_{\widetilde{\sigma}_l(l_4)},k_{\widetilde{\sigma}_l(l_4)^+})}(\widetilde{X}_{s,2}) W_{(k_{\widetilde{\sigma}_l(l_4^+)},k_{\widetilde{\sigma}_l(l_4^+)^+})}\left(X_{s,1}\right)\dots W_{(k_{\widetilde{\sigma}_l(l_1^-)},k_{\widetilde{\sigma}_l(l_1^-)^+})}(\widetilde{X}_{s,2}) \\
		&\quad\quad\quad\quad\quad\quad\quad\quad\quad\quad\quad\quad \times W_{(k_{\widetilde{\sigma}_l(l_3)},k_{\widetilde{\sigma}_l(l_3)^+})}\left(X_{s,2}\right) W_{(k_{\widetilde{\sigma}_l(l_3^+)},k_{\widetilde{\sigma}_l(l_3^+)^+})}\left(X_{s,1}\right)\dots W_{(k_{\widetilde{\sigma}_l(l_4^-)},k_{\widetilde{\sigma}_l(l_4^-)^+})}\left(X_{s,2}\right),
	\end{align*}
	where 
	\begin{itemize}
		\item $r_1,r_2,r_3,r_4$ are defined with the following rules, if the point $l_i$ follows $j$ then $r_i$ follows $\sigma_l(j)$, if several of the $l_i$ follow the same point $j$, then the associated points $r_i$ follows $\sigma_l(j)$ in the order of the circle set $((1,4))$.  
		\item $\widetilde{\sigma}_l$ is defined by $\widetilde{\sigma}_l(s) = \sigma_l(s)$ for $s\in ((1,4p))$ and $\widetilde{\sigma}_l(l_i) = r_i$.
	\end{itemize}
	Note in particular that with those definitions, one always has $\widetilde{\sigma}_l(l_i^-)^+=\widetilde{\sigma}_l(l_i)=r_i$. Besides the cardinality of $\{[l_1,l_2,l_3,l_4]\in ((1,4p))\}$ is bounded by $4p (4p+1) (4p+2) (4p+3)$ (with the exception of $p=0$ where this number is replaced by $6$). Hence by only keeping indices $l_1,l_2,l_3,l_4$ such that $J^l_{l_1}\underset{\mathclap{{}^{s}}}{\sim}J^l_{l_3}$, one can find $c_l$ smaller than this number, permutations $\widehat{\sigma}_r$ and $\widehat{\pi}_r$, as well as $J^r$ a $4p+4$-tuple of elements of $J_{p+1}$, such that by renaming $((1,4p))\cup\{r_1,\dots,r_4\}$ into $((1,4p+4))$ while keeping the order of the circle set, one has that
	\begin{align*}
		&\sum_{(k_1,\dots,k_{4p})\in ((1,k))} \delta_{i_{k_{\pi_l(1)}} = i_{k_{\pi_l(2)}}} \dots \delta_{i_{k_{\pi_l(4p-1)}} = i_{k_{\pi_l(4p)}}} L_{s_{p+1}}^{p+1}\left(Q\right) \\
		&= \frac{1}{2} \sum_{ (k_1,\dots,k_{4(p+1)}) \in ((1,k))} \sum_{r=1}^{c_l}\ \delta_{i_{k_{\widehat{\pi}_l(1)}} = i_{k_{\widehat{\pi}_l(2)}}} \dots \delta_{i_{k_{\widehat{\pi}_l(4p+3)}} = i_{k_{\widehat{\pi}_l(4p+4)}}}\\
		&\quad\quad\quad\quad\quad\quad\quad\quad\quad\quad\quad\quad A_{(k_{\widehat{\sigma}_l(1)},k_{\widehat{\sigma}_l(1)+1})}\left(X_{J^r_1}\right)\cdots A_{(k_{\widehat{\sigma}_l(4p+4)},k_{\widehat{\sigma}_l(4p+4)+1})}\left(X_{J_{4p+4}^r}\right) \nonumber
	\end{align*}
	Hence the conclusion by summing over $l$. 
	
	Let us now deduce Equation \eqref{asdfghjkl2} out of Equation \eqref{asdfghjkl}. First and foremost, with $\nu_r\in \mathbb{S}_{4p}$ the permutation such that $\nu_r(i)=i-r+1$ fo $i\geq r$ and $\nu_r(i)=i+4p-r+1$ else, one has that
	\begin{align*}
		&\sum_{ (k_1,\dots,k_{4p}) \in ((1,k))} \delta_{i_{k_{\pi_l(1)}} = i_{k_{\pi_l(2)}}} \dots \delta_{i_{k_{\pi_l(4p-1)}} = i_{k_{\pi_l(4p)}}} A_{(k_{\sigma_l(1)},k_{\sigma_l(1)+1})}\left(X_{J^l_1}\right)\cdots A_{(k_{\sigma_l(4p)},k_{\sigma_l(4p)+1})}\left(X_{J_{4p}^l}\right) \\
		&= \sum_{ (k_1,\dots,k_{4p}) \in ((1,k))} \delta_{i_{k_{\pi_l(1)}} = i_{k_{\pi_l(2)}}} \dots \delta_{i_{k_{\pi_l(4p-1)}} = i_{k_{\pi_l(4p)}}} \ev_l\circ\sigma_l\left( A_{(k_1,k_2)}\otimes\cdots\otimes A_{(k_{4p},k_{1})}\right) \\
		&= \sum_{r=1}^{4p}\ \sum_{ 1\leq k_r< \dots<k_{4p}<k_1<\dots<k_{r-1}\leq k}\delta_{i_{k_{\pi_l(1)}} = i_{k_{\pi_l(2)}}} \dots \delta_{i_{k_{\pi_l(4p-1)}} = i_{k_{\pi_l(4p)}}} \ev_l\circ\sigma_l\left( A_{(k_1,k_2)}\otimes\cdots\otimes A_{(k_{4p},k_{1})}\right) \\
		&= \sum_{r=1}^{4p}\ \sum_{ 1\leq k_r< \dots<k_{4p}<k_1<\dots<k_{r-1}\leq k} \delta_{i_{k_{\pi_l(1)}} = i_{k_{\pi_l(2)}}} \dots \delta_{i_{k_{\pi_l(4p-1)}} = i_{k_{\pi_l(4p)}}} \\ 
		&\quad\quad\quad\quad\quad\quad\quad\quad\quad\quad\quad\quad\quad\quad\quad\quad\quad\quad \ev_{l,r}\circ\sigma_l\circ\nu_r\left( A_{(k_r,k_{r+1})}\otimes\cdots\otimes A_{(k_{4p},k_{1})} \otimes\cdots\otimes A_{(k_{r-1},k_{r})} \right) \\
		&= \sum_{r=1}^{4p}\ \sum_{1\leq j_1,\dots,j_{4p} \leq d} \delta_{j_{\pi_l(1)} = j_{\pi_l(2)}} \dots \delta_{j_{\pi_l(4p-1)} = j_{\pi_l(4p)}} \\ 
		&\quad\quad\quad\quad\quad\quad\quad\quad\quad\quad\quad\quad\quad\quad\quad\quad\quad\quad \ev_{l,r}\circ\sigma_l\circ\nu_r\circ m\circ\left( \partial_{j_r}\otimes \cdots \otimes \partial_{j_{4p}}\otimes\partial_{j_1} \otimes \cdots \otimes \partial_{j_{r-1}} \right)(A) \\
		&= \sum_{r=1}^{4p}\ \sum_{1\leq j_1,\dots,j_{4p} \leq d}\ \delta_{j_{\pi_{l,r}(1)} = j_{\pi_{l,r}(2)}} \dots \delta_{j_{\pi_{l,r}(4p-1)} = j_{\pi_{l,r}(4p)}} \ev_{l,r}\circ\sigma_l\circ\nu_r\circ m\circ\left( \partial_{j_1}\otimes \cdots \otimes \partial_{j_{4p}} \right)(A),
	\end{align*}
	where $\ev_{l,r}$ is such that $\ev_{l,r}\circ\sigma_l\circ\nu_r = \ev_l\circ\sigma_l$, and $\pi_{l,r}= \nu_r\circ\pi_l$. Thus after summing over $l$ and reordering the terms in the sum we get Equation \eqref{asdfghjkl2} for any monomials. But since both the right and left hand side of this equation are linear map applied in $A$, this equation remains true for any polynomial.	
	
\end{proof}

\subsection{A decomposition of product of polynomials in free semicircular variables}

The following Theorem is the cornerstone of the different results of this paper. It allows us to do the following, given a product of polynomials in free semicircular variables, then when taking its trace, heuristically, one can assume that those polynomials are commutative at the cost of differentiating them.

\begin{prop}
	
	\label{edgtn}
	
	Let $A_1,\dots,A_n\in \C\langle X_1,\dots,X_d\rangle$ monomials, $x^1,\dots,x^n$ $d$-tuples of semicircular variables. We assume that for all $u,v\in [1,d]$, $i,j\in [1,n]$, 
	$$ \tau(x^i_u x_v^j) = \delta_{u=v} \kappa_{\{i,j\}},$$
	then with $\Delta_{\{i,j\}} := \sum_{l\geq 1} \kappa_{\{i,j\}}^l P_l$, where $P_l$ is the orthogonal projection on the subspace $\Hi^{\otimes l}$ of the Fock space, as well as $k_{i,i}=0$ and $k_{i,i^-}=\deg A_i$, one has that
	\begin{align}
		\label{pojfesv}
		&\tau\left(A_{1}\left(x^{1}\right)\dots A_{n}\left(x^{n}\right)\right) \\ \nonumber
		&= \sum_{K\in \mathcal{K}_n}\ \sum_{\substack{\forall i\in [1,n], j\in C_i^{**}(K),\ 0 < k_{i,i^+} < \dots < k_{i,{i^-}^-}} < \deg A_i}\ \prod_{i\in\widehat{K}} \tau\left(A_i(x)\right) \nonumber\\
		&\quad\quad\quad\quad\quad\quad\quad\quad\times \prod_{\substack{\{i,j\}\in K, \\ \text{such that } j=i^- \text{ in } C_i(K),\ i=j^- \text{ in } C_j(K) }} \tau\left( (A_i)_{(k_{i,j^-},k_{i,j}]} \Delta_{\{i,j\}} (A_j)_{(k_{j,i^-},k_{j,i}]} \right) \nonumber\\
		&\quad\quad\quad\quad\quad\quad\quad\quad\times \prod_{\substack{\{i,j\}\in K, \\ \text{such that } j\neq i^-  \text{ in } C_i(K),\ i=j^- \text{ in } C_j(K) }} \tau\left( (A_i)_{(k_{i,j^-},k_{i,j})} l^*_{z_{k_{i,j}}} \Delta_{\{i,j\}} (A_j)_{(k_{j,i^-},k_{j,i}]} \right) \nonumber\\
		&\quad\quad\quad\quad\quad\quad\quad\quad\times \prod_{\substack{\{i,j\}\in K, \\ \text{such that } j=i^- \text{ in } C_i(K),\ i\neq j^- \text{ in } C_j(K) }} \tau\left( (A_i)_{(k_{i,j^-},k_{i,j}]} \Delta_{\{i,j\}} r_{z_{k_{j,i}}} (A_j)_{(k_{j,i^-},k_{j,i})} \right) \nonumber\\
		&\quad\quad\quad\quad\quad\quad\quad\quad\times \prod_{\substack{\{i,j\}\in K, \\ \text{such that } j\neq i^- \text{ in } C_i(K),\ i\neq j^- \text{ in } C_j(K) }} \tau\left( (A_i)_{(k_{i,j^-},k_{i,j})} l^*_{z_{k_{i,j}}} \Delta_{\{i,j\}} r_{z_{k_{j,i}}} (A_j)_{(k_{j,i^-},k_{j,i})} \right), \nonumber
	\end{align}
	where given a monomial $A=X_{z_1}\dots X_{z_n}$, we set $A_{(u,v)} = X_{z_u}\dots X_{z_{v}}$. Besides, one also has that given another monomial $B=X_{z_{n+1}}\dots X_{z_m}$, with $l_z,r_z$ left and right multiplication operators,
	\begin{equation}
		\label{idjnvcdskmvcos0}
		\tau\left(A(x) P_l B(x)\right) = \tau\left(B(x) P_l A(x)\right),
	\end{equation}
	\begin{equation}
		\label{idjnvcdskmvcos}
		\tau\left(A_{[1,n)}(x) l_{z_n}^*P_l B(x)\right) = \tau\left( B(x) P_l r_{z_n} A_{[1,n)}(x)\right),
	\end{equation}
	\begin{equation}
		\label{idjnvcdskmvcos2}
		\tau\left(A(x) P_l r_{z_m} B_{[n+1,m)}(x)\right) = \tau\left( B_{[n+1,m)}(x) l_{z_m}^*P_l A(x)\right),
	\end{equation}
	\begin{equation}
		\label{idjnvcdskmvcos3}
		\tau\left(A_{[1,n)}(x) l_{z_n}^*P_l r_{z_m} B_{[n+1,m)}(x)\right) = \tau\left( B_{[n+1,m)}(x) l_{z_m}^* P_l r_{z_n} A_{[1,n)}(x)\right).
	\end{equation}
\end{prop}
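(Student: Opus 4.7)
The plan is to realize the correlated semicircular families on a single full Fock space: choose vectors $h_u^i$ in a Hilbert space $\Hi$ satisfying $\langle h_u^i,h_v^j\rangle=\delta_{u=v}\kappa_{\{i,j\}}$, and set $x_u^i:=l(h_u^i)+l(h_u^i)^*$ on $\F(\Hi)$. Under this realization $\tau=\langle\,\cdot\,\Omega,\Omega\rangle$, and $\Delta_{\{i,j\}}$ acts diagonally as the scalar $\kappa_{\{i,j\}}^l$ on $\Hi^{\otimes l}$. The essential feature of $\Delta_{\{i,j\}}$ is that it computes the ``cross-family'' correction: whenever one needs an inner product between a Fock vector built from the $h_u^i$ and one built from the $h_v^j$, the answer is the same as if both were built from a single semicircular family, multiplied on the level-$l$ component by $\kappa_{\{i,j\}}^l$, which is precisely the insertion of $\Delta_{\{i,j\}}$.

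To establish \eqref{pojfesv}, expand each monomial $A_i(x^i)=\prod_k\bigl(l(h_{z_k}^i)+l(h_{z_k}^i)^*\bigr)$ into $2^{\deg A_i}$ terms indexed by sign patterns, and evaluate $\tau(A_1(x^1)\cdots A_n(x^n))$ as the $\Omega$-component of $A_1(x^1)\cdots A_n(x^n)\Omega$. A standard Fock-space argument (the same one producing the free Wick formula for a single semicircular family) shows that only sign patterns of a restricted form contribute: inside each $A_i$ one selects breakpoints chopping $A_i$ into segments, and each segment either self-contracts (giving $\tau(A_i(x))$ when $i\in\widehat{K}$) or pairs with a segment of some other $A_j$, the set of pairings being non-crossing on the cyclic order $((1,n))$. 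Grouping the segment-pairings between monomials gives a configuration $K\in\K_n$, and the inner product between the ``creation word'' from $A_i$ and the matching ``annihilation word'' from $A_j$ produces the operator $\Delta_{\{i,j\}}$ by the cross-family principle above. Summing over $K$ and over the breakpoint positions $k_{i,j}$, ordered inside each $A_i$ according to the cyclic neighbor structure of $C_i(K)$, reproduces the main sum.

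The main obstacle lies in the boundary letters at the breakpoints: at $k_{i,j}$, the letter $X_{z_{k_{i,j}}}$ is either consumed as the final creation/annihilation of the adjacent segment or left over as a naked $l^*_{z_{k_{i,j}}}$ (respectively $r_{z_{k_{j,i}}}$ on the $A_j$ side) next to $\Delta_{\{i,j\}}$. A case analysis shows that which alternative occurs is determined by whether $j=i^-$ in $C_i(K)$ (and symmetrically for $i$ in $C_j(K)$): when $j=i^-$ the breakpoint sits at the natural endpoint of its segment, the letter is absorbed, and the sub-word is read over the half-open interval $(\cdot,k_{i,j}]$ with no extra operator; otherwise the letter floats, producing an open interval $(\cdot,k_{i,j})$ together with an external $l^*$ or $r$. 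The four products in \eqref{pojfesv} are exactly the four combinations of these two binary choices, and the hard bookkeeping is checking that the signs and placements of these $l^*$ and $r$ operators come out consistently in every case.

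Finally, for the identities \eqref{idjnvcdskmvcos0}--\eqref{idjnvcdskmvcos3} I would argue directly on the Fock space. The starting point is $\tau(A(x)P_l B(x))=\langle P_l B(x)\Omega,P_l A^{\mathrm{rev}}(x)\Omega\rangle$ together with the unitary involution $J:h_{u_1}\otimes\cdots\otimes h_{u_l}\mapsto h_{u_l}\otimes\cdots\otimes h_{u_1}$, which one verifies satisfies $J P_l A(x)\Omega=P_l A^{\mathrm{rev}}(x)\Omega$ and intertwines $l_z\leftrightarrow r_z$ (hence also $l_z^*\leftrightarrow r_z^*$). Combined with the reality of the inner products on $\F(\Hi)$ (the $\kappa_{\{i,j\}}$ being moments of self-adjoint variables) and with the elementary degree-shift relations $P_l l_z=l_z P_{l-1}$ and $P_l r_z=r_z P_{l-1}$, each of the four asserted identities reduces to a short rewriting in which the prescribed $l^*_{z_n}$ on one side and $r_{z_n}$ on the other end up on opposite sides of the $P_l$ precisely after applying $J$ and the degree shift.
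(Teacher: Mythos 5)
Your realization of the correlated families on a single Fock space via vectors $h_u^i$ with $\langle h_u^i,h_v^j\rangle=\delta_{u=v}\kappa_{\{i,j\}}$ is legitimate (the Gram matrix is PSD by hypothesis), and from there your argument is very close in spirit to the paper's: both proofs group non-crossing pairings by a configuration $K\in\K_n$ and breakpoints $k_{i,j}$, both resum the pairings within a block into $\tau(\cdot\,\Delta_{\{i,j\}}\,\cdot)$, and both split into the same four boundary cases governed by whether $j=i^-$ in $C_i(K)$ and $i=j^-$ in $C_j(K)$. The organizational difference is that the paper black-boxes the Wick formula for a single semicircular family and only returns to the Fock space to prove the two-block identity $\sum_{\pi\in NCP(AB),\,\#\{\text{cross pairs}\}=l}\prod\delta_{z_p=z_q}=\tau(A(x)P_lB(x))$ by induction, whereas you expand everything into $l+l^*$ from the outset; this is the same computation at a different level of abstraction, and neither version makes the four-case bookkeeping disappear. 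Where you genuinely diverge is in the symmetry identities \eqref{idjnvcdskmvcos0}--\eqref{idjnvcdskmvcos3}: the paper obtains them in one line from the manifest symmetry of the combinatorial sum under exchanging $A$ and $B$, while you invoke the reversal unitary $J$ intertwining $l_z\leftrightarrow r_z$. Your route is valid, but note that applying $J$ together with adjoints gives $\tau(AP_lB)=\overline{\tau(BP_lA)}$ rather than equality directly, so the reality observation you flagged is not a cosmetic remark but an indispensable step; the combinatorial route avoids it entirely and is shorter for these identities.
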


\begin{proof}
	
	Thanks to Definition 8.15 of \cite{nica_speicher_2006}, one has that for $W(x)=x_1\dots x_k$ a monomial in free semicircular variable,
	\begin{equation}
		\label{osicdlskmc}
		\tau(W(x)) = \sum_{\pi\in NCP(k)}\prod_{(p,q)\in \pi} c_{p,q},
	\end{equation}
	where $NCP(k)$ is the set of non-crossing pair partition of of $[1,k]$, and $c_{p,q}=\tau(x_px_q)$. Thus we set $k=\sum_l \deg A_l$. Consequently,
	\begin{align*}
		&\tau\left(A_{1}\left(x^{1}\right)\dots A_{n}\left(x^{n}\right)\right) \\
		&= \sum_{K\in \mathcal{K}_n}\ \sum_{\forall  \{i,j\}\in K,\ l_{\{i,j\}}\geq 1} \sum_{\substack{\pi\in NCP(k) \text{ such that} \\ \#\{(p,q)\in\pi\ |\ p\in A_{i},\ q\in A_{j}\} = l_{\{i,j\}}\delta_{(i,j)\in K}}} \prod_{(p,q)\in \pi} c_{p,q} \\
		&= \sum_{K\in \mathcal{K}_n}\ \sum_{\forall  \{i,j\}\in K,\ l_{\{i,j\}}\geq 1}\ \prod_{\{i,j\}\in K} \kappa_{\{i,j\}}^{l_{\{i,j\}}} \sum_{\substack{\pi\in NCP(k) \\ \#\{(p,q)\in\pi\ |\ p\in A_{i},\ q\in A_{j}\} = l_{\{i,j\}}\delta_{(i,j)\in K}}} \prod_{(p,q)\in \pi} \delta_{z_p=z_q} 
	\end{align*}
	We say that $\pi$ is compatible with a configuration $K$ if for all $\{i,j\}\in K$,
	$$ \#\{(p,q)\in\pi\ |\ p\in A_{i},\ q\in A_{j}\} \geq 1,$$
	and for all $\{i,j\}\notin K$ with $i\neq j$,
	$$ \#\{(p,q)\in\pi\ |\ p\in A_{i},\ q\in A_{j}\} =0.$$
	We then proceed to define a quantity $k_{i,j}(\pi)$ for $j\in C_i^{**}(K)$,
	$$ k_{i,j}(\pi) = \sup \{ p\in A\ |\ (p,q)\in \pi, p\in A_i, q\in A_j \}- \sum_{0\leq s< i} \deg A_s .$$
	Hence
	\begin{align*}
		&\tau\left(A_{1}\left(x^{1}\right)\dots A_{n}\left(x^{n}\right)\right) = \sum_{K\in \mathcal{K}_n}\ \sum_{\substack{\forall  \{i,j\}\in K,\ l_{\{i,j\}}\geq 1 \\ \forall i\in [1,n], j\in C_i^{**}(K),\ 0 < k_{i,i^+} < \dots < k_{i,{i^-}^-} < \deg A_i }}\\
		&\quad\quad\quad\quad\quad\quad\quad\quad\quad\quad\quad\quad\quad\quad\quad\quad\quad\quad\quad \prod_{\{i,j\}\in K} \kappa_{\{i,j\}}^{l_{\{i,j\}}} \sum_{\substack{\pi\in NCP(k) \\ \#\{(p,q)\in\pi\ |\ p\in A_{i},\ q\in A_{j}\} = l_{i,j} \delta_{(i,j)\in K} \\ \forall i\in [1,n], j\in C_i(K),\ k_{i,j}(\pi) = k_{i,j} }} \prod_{(p,q)\in \pi} \delta_{z_p=z_q} 
	\end{align*}
	Next, if $Q=ABCDE$ is a monomial, we set $NCP(BD)$ to be the set of non-crossing pair partitions of the set $[\deg A +1, \deg(AB)]\cup [\deg(ABC) +1,\deg(ABCD)]$. Thus for a given profile $K$, with $k_{i,i}=0$ and $k_{i,i^-}=\deg A_i$,
	\begin{align*}
		&\left\{ \pi\in NCP(k)\ \middle|\ \#\{(p,q)\in\pi\ |\ p\in A_{i},\ q\in A_{j}\} = l_{i,j}\delta_{(i,j)\in K},\ \forall i\in [1,n], j\in C_i(K),\ k_{i,j}(\pi) = k_{i,j}  \right\} \\
		&= \Bigg\{ \bigcup_{\{i,j\}\in K} \pi_{i,j} \bigcup_{i\in \widehat{K}} \pi_{i}\ \Bigg|\ \pi_i\in NCP\left( A_i\right),\ \pi_{i,j}\in NCP\left( (A_i)_{(k_{i,j^-},k_{i,j}]} (A_j)_{(k_{j,i^-},k_{j,i}]}\right),\\
		&\quad\quad\quad\quad\quad\quad\quad\quad\quad\quad\quad \#\{(p,q)\in\pi_{i,j}\ |\ p\in (A_i)_{(k_{i,j^-},k_{i,j}]},\ q\in  (A_j)_{(k_{j,i^-},k_{j,i}]}\} = l_{i,j}, \\
		&\quad\quad\quad\quad\quad\quad\quad\quad\quad\quad\quad \text{if } j\neq i^- \text{ in } C_i(K), \text{ and } ((A_i)_{k_{i,j}},q)\in \pi_{i,j}, \text{ then } q\in A_j, \\
		&\quad\quad\quad\quad\quad\quad\quad\quad\quad\quad\quad \text{if } i\neq j^- \text{ in } C_j(K), \text{ and } ((A_j)_{k_{j,i}},q)\in \pi_{i,j}, \text{ then } q\in A_i \Bigg\}.
	\end{align*}
	Thus one has
	\begin{align}
		\label{smcsmcscsc}
		&\tau\left(A_{1}\left(x^{1}\right)\dots A_{n}\left(x^{n}\right)\right) \\ \nonumber
		&= \sum_{K\in \mathcal{K}_n}\ \sum_{\substack{\forall  \{i,j\}\in K,\ l_{\{i,j\}}\geq 1 \\ \forall i\in [1,n], j\in C_i^{**}(K),\ 0 < k_{i,i^+} < \dots < k_{i,{i^-}^-} < \deg A_i }}\ \prod_{\{i,j\}\in K} \kappa_{\{i,j\}}^{l_{\{i,j\}}} \times \prod_{i\in\widehat{K}} \sum_{\pi_i\in NCP\left( A_i\right)}  \prod_{(p,q)\in \pi_i} \delta_{z_p=z_q} \\ \nonumber
		&\quad\quad\quad\quad\quad\quad\quad \times \prod_{\{i,j\}\in K} \sum_{\substack{\pi_{i,j}\in NCP\left( (A_i)_{(k_{i,j^-},k_{i,j}]} (A_j)_{(k_{j,i^-},k_{j,i}]}\right) \\ \#\{(p,q)\in\pi_{i,j}\ |\ p\in (A_i)_{(k_{i,j^-},k_{i,j}]},\ q\in  (A_j)_{(k_{j,i^-},k_{j,i}]}\} = l_{i,j} \\ \text{if } j\neq i^- \text{ in } C_i(K), \text{ and } ((A_i)_{k_{i,j}},q)\in \pi_{i,j}, \text{ then } q\in A_j \\ \text{if } i\neq j^- \text{ in } C_j(K), \text{ and } ((A_j)_{k_{j,i}},q)\in \pi_{i,j}, \text{ then } q\in A_i }} \prod_{(p,q)\in \pi_{i,j}} \delta_{z_p=z_q} 
	\end{align}
	But thanks to Equation \eqref{osicdlskmc}, one has that for a given $d$-tuple $x$ of free semicircular variables,
	\begin{equation}
		\label{wdocsomvcsdo}
		\sum_{\pi\in NCP\left( A\right)}  \prod_{(p,q)\in \pi} \delta_{z_p=z_q} = \tau\left(A(x)\right). 
	\end{equation}
	Besides
	\begin{equation}
		\label{soicdsodcscd}
		\sum_{\substack{\pi\in NCP\left( AB\right) \\ \#\{(p,q)\in\pi\ |\ p\in A,\ q\in B\} = l  }} \prod_{(p,q)\in \pi} \delta_{z_p=z_q} = \tau\left(A(x) P_l B(x)\right),
	\end{equation}
	where $P_l$ is the orthogonal projection on the subspace $\Hi^{\otimes l}$ of the Fock space.
	Indeed if $A=X_{z_{1}}\dots X_{z_n}$ and $B=X_{z_{n+1}}\dots X_{z_m}$, then
	\begin{align*}
		 \tau\left(A(x) P_l B(x)\right) &= \sum_{\varepsilon\in \{1,*\}^{m}}  \langle l_{z_1}^{\varepsilon_1}\dots l_{z_n}^{\varepsilon_n} P_l l_{z_{n+1}}^{\varepsilon_{n+1}}\dots l_{z_{m}}^{\varepsilon_{m}} \Omega, \Omega\rangle.
	\end{align*}
	Let us assume that $\langle l_{z_1}^{\varepsilon_1}\dots l_{z_n}^{\varepsilon_n} P_l l_{z_{n+1}}^{\varepsilon_{n+1}}\dots l_{z_{m}}^{\varepsilon_{m}} \Omega, \Omega\rangle\neq 0$. Since for any $i$, $l_i^*\Omega =0$, necessarily 
	$$ \langle l_{z_1}^{\varepsilon_1}\dots l_{z_n}^{\varepsilon_n} P_l l_{z_{n+1}}^{\varepsilon_{n+1}}\dots l_{z_{m}}^{\varepsilon_{m}} \Omega, \Omega\rangle = \langle l_{z_1}^{\varepsilon_1}\dots l_{z_n}^{\varepsilon_n} P_l l_{z_{n+1}}^{\varepsilon_{n+1}}\dots l_{z_{m-1}}^{\varepsilon_{m-1}} e_{z_{m}} , \Omega\rangle.$$
	Let now $j$ be the largest integer such that $l_{z_j}^{\varepsilon_j}\dots l_{z_{m-1}}^{\varepsilon_{m-1}} e_{z_{m}} = \Omega$. If such an integer does not exist, then in particular $l_{z_1}^{\varepsilon_1}\dots l_{z_n}^{\varepsilon_n} P_l l_{z_{n+1}}^{\varepsilon_{n+1}}\dots l_{z_{m-1}}^{\varepsilon_{m-1}} e_{z_{m}} \neq \Omega$, and more precisely, it is orthogonal to $\Omega$, which would imply that $\langle l_{z_1}^{\varepsilon_1}\dots l_{z_n}^{\varepsilon_n} P_l l_{z_{n+1}}^{\varepsilon_{n+1}}\dots l_{z_{m}}^{\varepsilon_{m}} \Omega, \Omega\rangle= 0$. Consequently,
	\begin{itemize}
		\item if $j\leq n$, 
		$$ \langle l_{z_1}^{\varepsilon_1}\dots l_{z_n}^{\varepsilon_n} P_l l_{z_{n+1}}^{\varepsilon_{n+1}}\dots l_{z_{m}}^{\varepsilon_{m}} \Omega, \Omega\rangle = \langle l_{z_{1}}^{\varepsilon_{1}}\dots l_{z_{j-1}}^{\varepsilon_{j-1}} \Omega, \Omega\rangle \langle l_{z_{j+1}}^{\varepsilon_{j+1}}\dots l_{z_n}^{\varepsilon_n} P_{l-1} l_{z_{n+1}}^{\varepsilon_{n+1}}\dots l_{z_{m-1}}^{\varepsilon_{m-1}} \Omega, \Omega\rangle \delta_{z_j = z_{m}}, $$
		\item if $j\geq n+1$, 
		$$ \langle l_{z_1}^{\varepsilon_1}\dots l_{z_n}^{\varepsilon_n} P_l l_{z_{n+1}}^{\varepsilon_{n+1}}\dots l_{z_{m}}^{\varepsilon_{m}} \Omega, \Omega\rangle = \langle l_{z_1}^{\varepsilon_1}\dots l_{z_n}^{\varepsilon_n} P_l l_{z_{n+1}}^{\varepsilon_{n+1}}\dots l_{z_{j-1}}^{\varepsilon_{j-1}} \Omega, \Omega\rangle \langle l_{z_{j+1}}^{\varepsilon_{j+1}}\dots l_{z_{m-1}}^{\varepsilon_{m-1}} \Omega, \Omega\rangle \delta_{z_j = z_{m}}. $$
	\end{itemize}
	Consequently, one has that
	\begin{align*}
		\tau\left(A(x) P_l B(x)\right) &= \sum_{j=1}^n \delta_{z_j = z_{m}} \sum_{\varepsilon\in \{1,*\}^{j-1}}  \langle l_{z_{1}}^{\varepsilon_{1}}\dots l_{z_{j-1}}^{\varepsilon_{j-1}} \Omega, \Omega\rangle \sum_{\varepsilon\in \{1,*\}^{m-j-1}} \langle l_{z_{j+1}}^{\varepsilon_{j+1}}\dots l_{z_n}^{\varepsilon_n} P_{l-1} l_{z_{n+1}}^{\varepsilon_{n+1}}\dots l_{z_{m-1}}^{\varepsilon_{m-1}} \Omega, \Omega\rangle \\
		&\quad + \sum_{j=n+1}^{m-1} \delta_{z_j = z_{m}} \sum_{\varepsilon\in \{1,*\}^{j-1}} \langle l_{z_1}^{\varepsilon_1}\dots l_{z_n}^{\varepsilon_n} P_l l_{z_{n+1}}^{\varepsilon_{n+1}}\dots l_{z_{j-1}}^{\varepsilon_{j-1}} \Omega, \Omega\rangle \sum_{\varepsilon\in \{1,*\}^{m-j-1}} \langle l_{z_{j+1}}^{\varepsilon_{j+1}}\dots l_{z_{m-1}}^{\varepsilon_{m-1}} \Omega, \Omega\rangle \\
		&= \sum_{j=1}^n \delta_{z_j = z_{m}} \tau\left(A_{[1,j)}(x)\right) \tau\left(A_{(j,n]}(x) P_{l-1} B_{[n+1,m)}(x)\right) \\
		&\quad + \sum_{j=n+1}^{m-1} \delta_{z_j = z_{m}} \tau\left(A(x)P_l B_{[n+1,j)}(x)\right) \tau\left(B_{(j,m)}(x)\right).	
	\end{align*}
	And the formula above allows us to prove Equation \eqref{soicdsodcscd} by induction. Besides by symmetry of the left-hand side of \eqref{soicdsodcscd}, one has that
	\begin{equation}
		\tau\left(A(x) P_l B(x)\right) = \tau\left(B(x) P_l A(x)\right).
	\end{equation}
	Similarly, with $r_i$ the right multiplication operator, one also has
	\begin{equation*}
		\sum_{\substack{\pi\in NCP\left( AB\right), \\ \#\{(p,q)\in\pi\ |\ p\in A,\ q\in  B\} = l, \\ \text{let } (A_{n},q)\in \pi, \text{ then } q\in B }} \prod_{(p,q)\in \pi} \delta_{z_p=z_q}  = \tau\left(A_{[1,n)}(x) l_{z_n}^*P_l B(x)\right) = \tau\left( B(x) P_l r_{z_n} A_{[1,n)}(x)\right).
	\end{equation*}
	\begin{equation*}
		\sum_{\substack{\pi\in NCP\left( AB\right), \\ \#\{(p,q)\in\pi\ |\ p\in A,\ q\in  B\} = l, \\ \text{let } (B_{m},q)\in \pi, \text{ then } q\in A }} \prod_{(p,q)\in \pi} \delta_{z_p=z_q}  = \tau\left(A(x) P_l r_{z_m} B_{[n+1,m)}(x)\right) = \tau\left( B_{[n+1,m)}(x) l_{z_m}^*P_l A(x)\right).
	\end{equation*}
	\begin{equation*}
		\sum_{\substack{\pi\in NCP\left( AB\right), \\ \#\{(p,q)\in\pi\ |\ p\in A,\ q\in  B\} = l, \\ \text{let } (A_{n},q)\in \pi, \text{ then } q\in B, \\ \text{let } (B_{m},q)\in \pi, \text{ then } q\in A }} \prod_{(p,q)\in \pi} \delta_{z_p=z_q}  = \tau\left(A_{[1,n)}(x) l_{z_n}^*P_l r_{z_m} B_{[n+1,m)}(x)\right) = \tau\left( B_{[n+1,m)}(x) l_{z_m}^* P_l r_{z_n} A_{[1,n)}(x)\right).
	\end{equation*}
	Consequently, by plugging those equalities in \eqref{smcsmcscsc}, with 
	$$\Delta_{\{i,j\}} := \sum_{l\geq 1} \kappa_{\{i,j\}}^l P_l, $$
	we get that
	\begin{align*}
		&\tau\left(A_{1}\left(x^{1}\right)\dots A_{n}\left(x^{n}\right)\right) \\ 
		&= \sum_{K\in \mathcal{K}_n}\ \sum_{\substack{\forall i\in [1,n], j\in C_i^{**}(K),\ 0 < k_{i,i^+} < \dots < k_{i,{i^-}^-} < \deg A_i} }\ \prod_{i\in\widehat{K}} \tau\left(A_i(x)\right) \\
		&\quad\quad\quad\quad\quad\quad\quad\quad\times \prod_{\substack{\{i,j\}\in K, \\ \text{such that } j=i^- \text{ in } C_i(K),\ i=j^- \text{ in } C_j(K) }} \tau\left( (A_i)_{(k_{i,j^-},k_{i,j}]} \Delta_{\{i,j\}} (A_j)_{(k_{j,i^-},k_{j,i}]} \right) \\
		&\quad\quad\quad\quad\quad\quad\quad\quad\times \prod_{\substack{\{i,j\}\in K, \\ \text{such that } j\neq i^-  \text{ in } C_i(K),\ i=j^- \text{ in } C_j(K) }} \tau\left( (A_i)_{(k_{i,j^-},k_{i,j})} l^*_{z_{k_{i,j}}} \Delta_{\{i,j\}} (A_j)_{(k_{j,i^-},k_{j,i}]} \right) \\
		&\quad\quad\quad\quad\quad\quad\quad\quad\times \prod_{\substack{\{i,j\}\in K, \\ \text{such that } j=i^- \text{ in } C_i(K),\ i\neq j^- \text{ in } C_j(K) }} \tau\left( (A_i)_{(k_{i,j^-},k_{i,j}]} \Delta_{\{i,j\}} r_{z_{k_{j,i}}} (A_j)_{(k_{j,i^-},k_{j,i})} \right) \\
		&\quad\quad\quad\quad\quad\quad\quad\quad\times \prod_{\substack{\{i,j\}\in K, \\ \text{such that } j\neq i^- \text{ in } C_i(K),\ i\neq j^- \text{ in } C_j(K) }} \tau\left( (A_i)_{(k_{i,j^-},k_{i,j})} l^*_{z_{k_{i,j}}} \Delta_{\{i,j\}} r_{z_{k_{j,i}}} (A_j)_{(k_{j,i^-},k_{j,i})} \right).
	\end{align*}
	
\end{proof}

One of the strength of Equation \eqref{pojfesv} obtained in the previous lemma is that both the left and right hand side of this equation are multilinear as a function in the monomials $A_i$. Consequently, one has the following lemma which we will use during the rest of the paper.

\begin{lemma}
	\label{sdtyuikmn}
	Let $A_1,\dots,A_n\in \A\langle X_1,\dots X_d\rangle$ polynomials, $x^1,\dots,x^n$ $d$-tuples of semicircular variables. We assume that for all $u,v\in [1,d]$, $i,j\in [1,n]$, 
	$$ \tau(x^i_u x_v^j) = \delta_{u=v} \kappa_{\{i,j\}}.$$
	Then with 
	$$\widetilde{K}= \{ (i,j)\ |\ i\in [1,n],\ j\in C_i^{**}(K)\},$$
	one has that, 
	\begin{align*}
		&\tau\left(A_{1}\left(x^{1}\right)\dots A_{n}\left(x^{n}\right)\right) 	= \sum_{K\in \mathcal{K}_n}\ \sum_{z\in [1,d]^{\widetilde{K}}}\ H^K_z\left( \bigotimes_{i\in [1,n]}\ \left(\bigotimes_{j\in C_i^{**}(K) } \partial_{z_{i,j}}\right)(A_i) \right),
	\end{align*}
	where with the notations of Definition \ref{3applicationopv} with $\A=\C$, if $C_i(K)\neq \{i\}$,
	$$ \bigotimes_{j\in C_i^{**}(K) } \partial_{z_{i,j}} = \partial_{z_{i,i^+}}\otimes \partial_{z_{i,{i^+}^+}}\otimes\cdots\otimes \partial_{z_{i,{i^-}^-}}, $$
	and else
	$$ \bigotimes_{j\in C_i^{**}(K) } \partial_{z_{i,j}} = \id_{\C\langle X_1,\dots,X_d\rangle}. $$
	Finally with $\Delta_{\{i,j\}} := \sum_{l\geq 1} \kappa_{\{i,j\}}^l P_l$, where $P_l$ is the orthogonal projection on the subspace $\Hi^{\otimes l}$ of the Fock space,
	\begin{align*}
		H^K_z\left( \bigotimes_{i\in [1,n]}\ \left(\bigotimes_{j\in C_i^*(K) } A_{i,j} \right)\right) &=  \prod_{i\in\widehat{K}} \tau\left(A_{i,i}(x)\right) \\
		&\times \prod_{\substack{\{i,j\}\in K, \\ \text{such that } j=i^- \text{ in } C_i(K),\ i=j^- \text{ in } C_j(K) }} \tau\left( A_{i,j}(x) \Delta_{\{i,j\}} A_{j,i}(x) \right) \\
		&\times \prod_{\substack{\{i,j\}\in K, \\ \text{such that } j\neq i^-  \text{ in } C_i(K),\ i=j^- \text{ in } C_j(K) }} \tau\left( A_{i,j}(x) l^*_{z_{i,j}} \Delta_{\{i,j\}} A_{j,i}(x) \right) \\
		&\times \prod_{\substack{\{i,j\}\in K, \\ \text{such that } j=i^- \text{ in } C_i(K),\ i\neq j^- \text{ in } C_j(K) }} \tau\left( A_{i,j}(x) \Delta_{\{i,j\}} r_{z_{j,i}} A_{j,i}(x) \right) \\
		&\times \prod_{\substack{\{i,j\}\in K, \\ \text{such that } j\neq i^- \text{ in } C_i(K),\ i\neq j^- \text{ in } C_j(K) }} \tau\left( A_{i,j}(x) l^*_{z_{i,j}} \Delta_{\{i,j\}} r_{z_{j,i}} A_{j,i}(x) \right).
	\end{align*}
	
\end{lemma}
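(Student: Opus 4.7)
The plan is to deduce Lemma \ref{sdtyuikmn} from Proposition \ref{edgtn} by identifying the sums over intermediate positions $k_{i,j}$ with iterated noncommutative derivatives, and then extending from scalar monomials to operator-valued polynomials via multilinearity.

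The key observation is an elementary identity: for a scalar monomial $A = X_{w_1}\cdots X_{w_n}$ and indices $z_1,\dots,z_m \in [1,d]$, iterating Definition \ref{3application} gives
\begin{equation*}
	\partial_{z_1}\otimes\cdots\otimes\partial_{z_m}(A) \;=\; \sum_{1 \leq k_1 < \cdots < k_m \leq n}\ \Bigl(\prod_{s=1}^{m}\delta_{w_{k_s} = z_s}\Bigr)\ A_{(0,k_1)} \otimes A_{(k_1,k_2)} \otimes \cdots \otimes A_{(k_{m-1},k_m)} \otimes A_{(k_m,n]},
\end{equation*}
where each interior interval $(k_{s-1}, k_s)$ is open because the derivative extracts the letter at position $k_s$, while the final interval $(k_m, n]$ is closed on the right because no letter is extracted there. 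Summing over $z_1,\dots,z_m \in [1,d]$ removes all the Kronecker constraints and reproduces exactly the ordered sum over interior positions appearing in Proposition \ref{edgtn}, once one identifies $m = |C_i^{**}(K)|$ and $(z_1,\dots,z_m) = (z_{i,j})_{j \in C_i^{**}(K)}$ listed in the order prescribed by Definition \ref{cgoijdvokds}.

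First, I would apply Proposition \ref{edgtn} to scalar monomials $A_1,\dots,A_n \in \C\langle X_1,\dots,X_d\rangle$ and, for each $i$, identify the ordered sum over positions $k_{i,j}$ with $j \in C_i^{**}(K)$ as the action of $\bigotimes_{j \in C_i^{**}(K)} \partial_{z_{i,j}}$ summed over $z$. The four cases in the product defining $H^K_z$ correspond to the four cases appearing in Proposition \ref{edgtn}: the presence of $l^*_{z_{i,j}}$ (respectively $r_{z_{j,i}}$) corresponds to the condition $j \neq i^-$ in $C_i(K)$ (respectively $i \neq j^-$ in $C_j(K)$), which places $j$ strictly inside $C_i^{**}(K)$ and produces an open-right interval adjacent to an extracted letter; conversely, the absence of such an operator corresponds to the boundary case $k_{i,i^-} = \deg A_i$, which is exactly the rightmost factor of the derivative expansion with closed right endpoint. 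The degenerate case $C_i(K) = \{i\}$, for which $i \in \widehat{K}$, matches the convention $\bigotimes_{j \in C_i^{**}(K)} \partial_{z_{i,j}} = \id$ together with the factor $\tau(A_i(x))$. The ``extra'' terms in the derivative sum where some $k_s$ hits the right endpoint produce an empty piece adjacent to a $\Delta_{\{i,j\}}$ without an $l^*$; since $\Delta$ projects onto $\bigoplus_{l \geq 1} \Hi^{\otimes l}$, a direct Fock-space computation shows these terms contribute zero and may be freely adjoined to the sum.

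Next, since the derivatives $\partial_{z_{i,j}}$ and the map $H^K_z$ are linear, the identity extends from scalar monomials to scalar polynomials by linearity in each $A_i$. Finally, writing any $A_i \in \A\langle X_1,\dots,X_d\rangle$ as $\sum_M a_{i,M} \otimes M$ with $a_{i,M} \in \A$ and $M$ a scalar monomial, the operator-valued derivative of Definition \ref{3applicationopv} acts as $\id_\A$ on the $\A$-factor, so the identity extends to $\A$-valued polynomials by $\A$-multilinearity, the $\A$-coefficients being collected through $H^K_z$. The only real difficulty is bookkeeping: verifying that the reversed cyclic ordering on $C_i^{**}(K)$ prescribed by Definition \ref{cgoijdvokds} is exactly the ordering in which the tensor factors of the iterated derivative emerge, so that they pair correctly with their partners in the configuration $K$ inside $H^K_z$.
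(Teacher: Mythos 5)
The proposal is correct and follows the paper's own (implicit) approach: the paper states this lemma without a separate proof, merely remarking that it follows from Proposition \ref{edgtn} by multilinearity, which is exactly the translation you carry out — rewriting the ordered sums over interior positions $k_{i,j}$ as iterated noncommutative derivatives summed over $z$, observing that the boundary terms where a derivative lands on the last letter contribute nothing because $\Delta_{\{i,j\}}$ has no $\Omega$-component, and then extending by $\C$- and $\A$-linearity. The only caveat is that you write the derivative identity with $\delta_{w_{k_s}=z_s}$, whereas tracking through Definition \ref{3applicationopv} shows the $s$-th split position actually pairs with the index $z_{m+1-s}$; this is exactly the reversal you flag as the remaining bookkeeping, which Definition \ref{cgoijdvokds} is built to absorb, so the argument stands.
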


\subsection{The combinatorics of circle sets}

This subsection is dedicated to studying the combinatorics of configurations. As mentioned in the previous section, it is linked with the notion of non-crossing partitions. However, because we allow elements of circle sets to be in several pairs, this changes noticeably the combinatorics. In the following lemma, we compute an upper bound on the number of possible pairs as well as on the cardinality of the set of configuration.

\begin{lemma}
	
	\label{ssxdcffvg}
	
	Let $E$ be a circle set of cardinal at least $2$, and $K\in\mathcal{K}(E)$, we define the following quantity
	$$ c_K := \sum_{i\in E} \#\{j\in E\ |\ \{i,j\}\in K\}. $$
	Then with $n=\#E$,
	$$ \forall K\in\mathcal{K}(E),\ c_K \leq 4n - 6,$$
	besides, one has that $\# \mathcal{K}(E) \leq (80e)^n$.
	
\end{lemma}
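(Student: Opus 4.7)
The plan is to prove the two bounds separately. For the first, observe that $c_K = 2|K|$, since each pair $\{i,j\}\in K$ contributes once to the count at $i$ (namely $j$) and once at $j$. So it suffices to show $|K|\leq 2n-3$, which is the standard upper bound on the number of edges in a non-crossing chord set on $n$ points in convex position (equivalently, a triangulation of the $n$-gon). I would prove this by strong induction on $n$. The case $n=2$ is immediate since only one pair exists, so $|K|\leq 1 = 2n-3$. For $n\geq 3$, if $K=\emptyset$ the bound is trivial; otherwise pick any $\{i,j\}\in K$. The non-crossing condition in the definition of configuration forces every other $\{k,l\}\in K$ to satisfy $k,l\in [i,j]$ or $k,l\in [j,i]$. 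Writing $a=\#[i,j]$ and $b=\#[j,i]$ (so $a+b=n+2$) and letting $K_1, K_2$ denote the pairs of $K$ contained in each closed arc, one checks $K_1\cap K_2=\{\{i,j\}\}$, so $|K|=|K_1|+|K_2|-1$. Applying the inductive hypothesis on the sub-circles of sizes $a$ and $b$ yields $|K|\leq (2a-3)+(2b-3)-1 = 2n-3$, hence $c_K \leq 4n-6$.

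For the cardinality bound, I would use a triangulation-extension argument. Given any $K\in\mathcal{K}(E)$, one can greedily add pairs (not yet in $K$ and compatible with the non-crossing condition) until no more can be added; the resulting $T\supseteq K$ is maximal non-crossing, and by the first part it has exactly $2n-3$ elements, i.e.\ it is a triangulation of the polygon with vertex set $E$. Thus every configuration sits inside at least one triangulation. Since the number of triangulations of the $n$-gon is the Catalan number $C_{n-2}\leq 4^n$, and each triangulation has $2^{2n-3}\leq 4^n$ subsets, we obtain
$$\#\mathcal{K}(E) \leq C_{n-2}\cdot 2^{2n-3} \leq 4^n\cdot 4^n = 16^n,$$
which is far below $(80e)^n \approx 217^n$.

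The main obstacle is the bookkeeping in the inductive step: one must verify that the restriction of the non-crossing condition to a sub-circle $[i,j]$ agrees with the ambient condition on $E$, which is immediate from the fact that $[i,j]$ inherits its ordering from $E$, and that $\{i,j\}$ is indeed the unique pair counted in both $K_1$ and $K_2$, which follows because $[i,j]\cap[j,i]=\{i,j\}$ as subsets of $E$. With these observations the argument is purely combinatorial and elementary.
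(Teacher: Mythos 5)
Your proposal takes a genuinely different and more elementary route than the paper's on both counts. For the first bound, the paper proves $c_K\leq 4n-6$ directly by a rather involved induction that peels off the arcs between consecutive elements of $C_1(K)$; you instead observe that $c_K=2|K|$ and reduce to the classical fact that a non-crossing chord set (chords may share endpoints) on $n$ points has at most $2n-3$ chords, i.e.\ the outerplanar edge bound. For the second bound, the paper stratifies $\mathcal{K}(E)$ by the degree sequence $(l_i)_{i\in E}$ and injects each stratum into the non-crossing pair partitions of a set of size $\sum_i l_i$, while you extend every configuration to a triangulation of the $n$-gon and sum $2^{2n-3}$ over the $C_{n-2}$ triangulations, which gives the much sharper constant $16^n$ in place of $(80e)^n$. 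Both of your strategies are sound, and the reduction of the first part to a standard combinatorial fact is cleaner than the paper's bookkeeping.

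There is, however, a small but genuine gap in your induction for $|K|\leq 2n-3$. You say ``pick any $\{i,j\}\in K$,'' but if $\{i,j\}$ is an adjacent pair, say $j=i^+$, then $[i,j]=\{i,j\}$ has $a=2$ elements while $[j,i]$ has $b=n$ elements; in particular $\{i,j\}\subseteq[j,i]$, so $K_2=K$ and the inductive step only yields the tautology $|K|\leq 1+|K|-1$. For the same reason, the base case $n=2$ alone does not suffice: for $n=3$ every pair is adjacent, so the reduction never triggers. The fix is routine: if $K$ contains a non-adjacent pair, choose that one, so that $3\leq a,b\leq n-1$ and strong induction applies; if every pair in $K$ is adjacent, then $|K|\leq n\leq 2n-3$ holds directly for $n\geq 3$. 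With this patch your argument goes through, and the rest (the identification $c_K=2|K|$, $K_1\cap K_2=\{\{i,j\}\}$, the compatibility of the non-crossing condition with restriction to a contiguous sub-arc, and the triangulation-extension count) is correct.
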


\begin{rem}
	Note that the inequality $c_K \leq 4n - 6$ is optimal. Indeed if $E=((1,n))$, it is an equality for 
	$$ K= \bigcup_{j\in [2,n]} \{\{1,j\}\} \bigcup_{j\in [1,n]} \{\{j,j+1\}\}. $$
\end{rem}

\begin{proof}
	
	Let us prove the first statement. We proceed by induction on the cardinality of $E$. If $\# E=2$, then $K=\{\{ 1,2 \}\}$ and $c_K=2$, or $K=\emptyset$ and $c_K=0$. Now let $K\in\mathcal{K}(E)$ with $\#E=n$, if $C_1(K)=\{1\}$, then $K\in\mathcal{K}(E\setminus\{1\})$ and since $\# E\setminus\{1\} = n-1$ one can apply our induction hypothesis. Otherwise, for $j\in C_1(K)$, we define 
	$$K_j = \left\{ \{u,v\}\in K\ |\ u,v\in [j,j^-] \right\}\setminus \left\{\{j,j^-\}\right\}. $$
	Note that in the equation above, $j^-$ is the element preceding $j$ in $C_1(K)$ but the interval $[j,j^-]$ is in $E$ whose ordering is reversed.
	 
	We also define $\widetilde{K}_j$ similarly, but if there exists $i\in[1,n]$ such that both $\{i,j^-\}$ and $\{i,j\}$ are distinct and belong to $K_j$ then we remove $\{i,j^-\}$. Finally we define $E_j$ the circle set $[j,j^-]$ quotiented with the equivalence relation $a\sim b$ if and only if $a=b$, or if they are both equal to either $j$ or $j^-$. Consequently one can view $\widetilde{K}_j$ as an element of $\mathcal{K}(E_j)$. Let us now make a few remark.
	\begin{itemize}
		\item If $j^-$ is the point preceding $j$ in $C_1(K)$ but also following it in $E$, then by construction $E_j$ has one element and $c_{E_j}=0$. Besides it is impossible to find $i\in[1,n]$ such that both $\{i,j\}$ and $\{i,j^+\}$ are distinct and belong to $K_j$ since $i$ would have to be in between $j$ and $j^+$.
		\item The cardinal of $E_j$ is equal to the one of $((j,j^+))$ minus one. Consequently $\sum_{j\in C_1(K)} \# E_j = n$.
		\item Since the elements of $K$ are non-crossing, necessarily there can be at most one $i\in[1,n]$ such that both $\{i,j\}$ and $\{i,j^+\}$ belongs to $K_j$.
		\item If $j\in \{1^+,1\}$, then there cannot exist $i\in[1,n]$ such that both $\{i,j\}$ and $\{i,j^-\}$ belongs to $K_j$, since then $i$ would belong to $C_1(K)$ and be in between $1^-$ and $1$ or $1$ and $1^+$, which is impossible by definition.
	\end{itemize}
	Besides, $K$ is the union of the following sets,
	\begin{align*}
		K =&\ \bigcup_{j\in C_1(K)\setminus\{1\}} \{\{1,j\}\} \\
		&\bigcup_{j\in C_1(K)\setminus \{1,1^-\}} \{\{j,j^-\}\}\cap K \\
		&\bigcup_{j\in C_1(K)} \widetilde{K}_j\\
		&\bigcup_{j\in C_1(K)\setminus\{1^-,1\}} \bigcup_{i\in [1,n]} \{\{i,j^-\}\}\cap K.
	\end{align*}
	Consequently, with $l$ the cardinal of $C_1(K)$ and $m$ the number of $j\in C_1(K)$ such that $\# E_j >1$, by induction
	\begin{align*}
		c_K &\leq 2(l-1) + 2(l-2) + \sum_{j\in C_1(K)} c_{\widetilde{K}_j} + 2m \\
		&\leq 4l-6+ 2m + \sum_{j\in C_1(K)} 4\# E_j -4-2\delta_{\# E_j >1}  \\
		&\leq 4n-6.
	\end{align*}
	Hence the conclusion. Let us now prove the second statement. Given $K\in\mathcal{K}(E)$, we define the following quantity
	$$ l_i := \#\{j\in E\ |\ \{i,j\}\in K\}. $$
	and we partition $\mathcal{K}(E)$ like this
	$$ \mathcal{K}(E) = \bigcup_{k_1,\dots, k_n\geq 0} \left\{ K\in \mathcal{K}(E)\ |\ \forall i\in E, l_i=k_i \right\}.$$
	Thus thanks to the first part of the lemma,
	$$ \# \mathcal{K}(E) \leq \sum_{\substack{k_1,\dots, k_n\geq 0 \\ k_1+\cdots+k_n \leq 4n-6}} \#\left\{ K\in \mathcal{K}(E)\ |\ \forall i\in E, l_i=k_i \right\}.$$
	Besides, the map which to an element of $\left\{ K\in \mathcal{K}(E)\ |\ \forall i\in E, l_i=k_i \right\}$ defines an element of $NC2P([1,k_1+\dots+k_n])$ by pairing $k_1+\dots+k_{i-1}+l$ to $k_1+\dots+k_{j-1}+m$ where (starting the count from $i$) $j$ is the $l+1$-th element of $C_i(K)$ and $m$ such that $i$ be the $m+1$-th element of $C_j(K)$ is injective. Consequently, thanks to Lemma 8.9 of \cite{nica_speicher_2006},
	$$  \#\left\{ K\in \mathcal{K}(E)\ |\ \forall i\in E, l_i=k_i \right\} \leq \# NC2P([1,k_1+\dots+k_n]) \leq 2^{k_1+\dots+k_n}.$$
	Hence
	$$ \# \mathcal{K}(E) \leq 2^{4n-6} \#\left\{ k_1,\dots, k_n\geq 0\ |\ k_1+\cdots+k_n \leq 4n-6\right\}.$$
	Besides
	$$ \#\left\{ k_1,\dots, k_n\geq 0\ |\ k_1+\cdots+k_n \leq l\right\} = \sum_{k_n=0}^{l} \#\left\{ k_1,\dots, k_{n-1}\geq 0\ |\ k_1+\cdots+k_{n-1} \leq l-k_{n}\right\} .$$
	Thus since $\#\left\{ k_1\geq 0\ |\ k_1 \leq l\right\} =l+1$, let us assume that $\#\left\{ k_1,\dots, k_n\geq 0\ |\ k_1+\cdots+k_n \leq l\right\} \leq (l+n)^n/n!$, then thanks to the equation above,
	$$ \#\left\{ k_1,\dots, k_{n+1}\geq 0\ |\ k_1+\cdots+k_{n+1} \leq l\right\} \leq \sum_{k_{n+1}=0}^{l} \frac{(l-k_n+n)^n}{n!}\leq \int_{-1}^{l} \frac{(l+n-t)^n}{n!} dt \leq \frac{(l+n+1)^{n+1}}{(n+1)!}.$$
	Hence we get that,
	$$ \# \mathcal{K}(E) \leq 2^{4n-6} \frac{(5n-6)^{n}}{n!} \leq 2^{4n-6} \left(\frac{5n-6}{n}\right)^{n} e^n \leq (5\times 2^4e)^n.$$
	
\end{proof}

\subsection{The action of a permutation on $H^K_z$}

Given that in Lemma \ref{poiuytrew} one has to permutes polynomial in free semicircular variables, in order to Lemma \ref{sdtyuikmn}, we need to understand how the operator $H^K_z$ evaluated in a tensor whose components have been permuted behave. We do so in the following lemma.

\begin{lemma}
	\label{nkytrdds}
	
	With the notations of Lemma \ref{sdtyuikmn} and \ref{ssxdcffvg}, we set $l_K = c_K/2+\#\widehat{K}$ and $m_K = c_K+\#\widehat{K}$. Given a permutation $\sigma\in \mathbb{S}_n $, and $A_{i}\in \C\langle X_1,\dots,X_d\rangle$, one has that
	\begin{align*}
		& H^K_z\left( \bigotimes_{ i\in [1,n]}\ \left(\bigotimes_{j\in C_i^{**}(K)} \partial_{z_{i,j}}\right)(A_{\sigma(i)})\right) \\
		&\quad\quad\quad\quad\quad\quad\quad\quad\quad\quad\quad\quad\quad = \tau^{\otimes l_K} \left( u^{\sigma,K}_z\left( \bigotimes_{p\in [1,n]} \ \left(\bigotimes_{q\in \sigma\left(C_{\sigma^{-1}(p)}^{**}(K)\right)} \partial_{z_{\sigma^{-1}(p),\sigma^{-1}(q)}}\right)(A_p) \right) \right),
	\end{align*}
	where $u^{\sigma,K}_z:\C\langle X_1,\dots,X_d\rangle^{\otimes m_K}\mapsto \CC^*(x)^{\otimes l_k}$ is defined by,
	\begin{align*}
		u^{\sigma,K}_z\left( \bigotimes_{ p\in [1,n]}\ \bigotimes_{q\in \sigma(C_{\sigma^{-1}(p)}^*(K))} A_{p,q}\right) := \prod_{p\in [1,n]} \prod_{q \in \sigma(C_{\sigma^{-1}(p)}^*(K))} u_{p,q}^{\sigma,K}({A}_{p,q}(x)) \Lambda_{p,q}^z.
	\end{align*}
	with ${\Lambda}_{p,q}^z\in B(\F)^{\otimes l_K}$ an operator of norm smaller than one and
	$$u_{p,q}^{\sigma,K}({A}_{p,q}(x)) = A_{p,q}(x) \bigotimes_{\substack{ \{a,b\} \neq \{\sigma^{-1}(p),\sigma^{-1}(q)\}  \text{ such that} \\ a\in [1,n], b\in C^*_a(K) }} \id_{\CC^*(x)}.$$
	More precisely, with $i=\sigma^{-1}(p)$ and $j=\sigma^{-1}(q)$,
	\begin{itemize}
		\item if $p\geq q$, $\Lambda_{p,q}^z = (\id_{\CC^*(x)})^{\otimes l_K}$,
		\item else if  $j=i^-$ in $C_i(K)$, and $i=j^-$ in $C_j(K)$, then $\Lambda_{p,q}^z= \Delta_{\{i,j\}} \bigotimes_{\substack{ \{a,b\} \neq \{i,j\}}} \id_{\CC^*(x)}$,
		\item else if $j\neq i^-$  in $C_i(K)$, $i=j^-$ in $C_j(K)$, $\Lambda_{p,q}^z= (l^*_{z_{i,j}} \Delta_{\{i,j\}}) \bigotimes_{\substack{ \{a,b\} \neq \{i,j\}}} \id_{\CC^*(x)}$,
		\item else if $j=i^-$ in $C_i(K)$, $i\neq j^-$ in $C_j(K)$, $\Lambda_{p,q}^z = (\Delta_{\{i,j\}} r_{z_{j,i}}) \bigotimes_{\substack{ \{a,b\} \neq \{i,j\}}} \id_{\CC^*(x)}$,
		\item else if $j\neq i^-$ in $C_i(K)$, $i\neq j^-$ in $C_j(K)$, $\Lambda_{p,q}^z = l^*_{z_{i,j}} \Delta_{\{i,j\}} r_{z_{j,i}}$.
	\end{itemize}

\end{lemma}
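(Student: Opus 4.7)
The plan is to expand both sides of the claimed identity, observe that the underlying tensor of polynomial factors is the same once one rewrites the indexing through $p=\sigma(i)$ and $q=\sigma(j)$, and then reassemble the $l_K$ traces produced by $H^K_z$ into the slot-wise product form dictated by $u^{\sigma,K}_z$. First I would note that the input tensor on the left, $\bigotimes_{i\in[1,n]}\bigotimes_{j\in C_i^{**}(K)}\partial_{z_{i,j}}(A_{\sigma(i)})$, is, once the $|C^{**}_i(K)|$ derivatives have been expanded, a tensor of $m_K$ polynomial factors indexed by pairs $(i,j)$ with $j\in C^*_i(K)$. Under the relabelling $(i,j)\mapsto(\sigma(i),\sigma(j))=(p,q)$ this is precisely the input tensor on the right, $\bigotimes_{p}\bigotimes_{q\in\sigma(C^{**}_{\sigma^{-1}(p)}(K))}\partial_{z_{\sigma^{-1}(p),\sigma^{-1}(q)}}(A_p)$, since $A_{\sigma(i)}=A_p$ and the derivative labels transform accordingly. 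So only the composition with $H^K_z$ on one side, versus with $\tau^{\otimes l_K}\circ u^{\sigma,K}_z$ on the other, remains to be compared.

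By Lemma \ref{sdtyuikmn}, $H^K_z$ applied to the left-hand tensor yields a product of $l_K$ scalar traces: one trace $\tau(A_i(x))$ per $i\in\widehat{K}$, and for each pair $\{i,j\}\in K$ one trace of the form $\tau(A_{i,j}(x)\,\Lambda\,A_{j,i}(x))$ where $\Lambda$ is one of $\Delta_{\{i,j\}}$, $l^*_{z_{i,j}}\Delta_{\{i,j\}}$, $\Delta_{\{i,j\}}r_{z_{j,i}}$ or $l^*_{z_{i,j}}\Delta_{\{i,j\}}r_{z_{j,i}}$, according to whether $j=i^-$ or not in $C_i(K)$ and whether $i=j^-$ or not in $C_j(K)$. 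On the other hand the product $\prod_{p\in[1,n]}\prod_{q}u^{\sigma,K}_{p,q}(A_{p,q}(x))\,\Lambda^z_{p,q}$ lives in $B(\F)^{\otimes l_K}$, and each factor acts non-trivially in exactly one slot (the one assigned to the pair $\{\sigma^{-1}(p),\sigma^{-1}(q)\}$ in $K$, or the singleton $\sigma^{-1}(p)$ if $\sigma^{-1}(p)\in\widehat{K}$) and as the identity everywhere else; so this product factorises slot by slot and $\tau^{\otimes l_K}$ can be evaluated one slot at a time.

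I would then match the two side by side. For a singleton $i\in\widehat{K}$ with $p=\sigma(i)$, only one factor touches the corresponding slot and it is $A_i(x)\cdot\id$, giving the scalar $\tau(A_i(x))$. For a pair $\{i,j\}\in K$, set $\alpha=\sigma(i)$ and $\beta=\sigma(j)$ and first suppose that $\alpha<\beta$: the associated slot receives two contributions, namely at outer index $p=\alpha$ (with $q=\beta>\alpha$, matching the ``$p<q$'' branch of the bullet list) the operator $A_{\alpha,\beta}(x)\,\Lambda^z_{\alpha,\beta}$ carrying the $\Delta_{\{i,j\}}$-factor (and, where needed, the flanking $l^*$ and $r$), and at outer index $p=\beta$ (with $q=\alpha<\beta$, the ``$p\geq q$'' branch) the operator $A_{\beta,\alpha}(x)\cdot\id$; multiplying in that order inside the slot and applying $\tau$ reproduces exactly $\tau(A_{i,j}(x)\,\Lambda\,A_{j,i}(x))$ as prescribed.

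When $\alpha>\beta$ the roles of outer and inner indices are reversed, so the slot product becomes $A_{j,i}(x)\,\Lambda'\,A_{i,j}(x)$, where $\Lambda'$ is read off from the $(p,q)=(\beta,\alpha)$ case of the bullet list applied to $(\sigma^{-1}(p),\sigma^{-1}(q))=(j,i)$. Under the swap $i\leftrightarrow j$ the four positional cases get exchanged pairwise, and the corresponding swap identities \eqref{idjnvcdskmvcos0}, \eqref{idjnvcdskmvcos}, \eqref{idjnvcdskmvcos2} and \eqref{idjnvcdskmvcos3} of Proposition \ref{edgtn} each yield $\tau(A_{j,i}(x)\,\Lambda'\,A_{i,j}(x))=\tau(A_{i,j}(x)\,\Lambda\,A_{j,i}(x))$, one identity per case. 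The main obstacle is precisely this bookkeeping: one has to check that in every one of the four configuration cases, and for each of the two orderings $\alpha<\beta$ and $\alpha>\beta$, the bullet-list definition of $\Lambda^z_{p,q}$ produces exactly the $\Lambda'$ for which the relevant identity of Proposition \ref{edgtn} applies. Once this finite case analysis is carried out, the slot-wise equalities multiply together into the full identity asserted by the lemma.
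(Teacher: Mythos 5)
Your proposal is correct and follows essentially the same route as the paper's proof: both apply Lemma \ref{sdtyuikmn} to express the left-hand side as a product of $l_K$ traces, then invoke the swap identities \eqref{idjnvcdskmvcos0}--\eqref{idjnvcdskmvcos3} to handle the case $\sigma(i)>\sigma(j)$ so that the factor with smaller image index can always be placed first, and finally repackage the resulting traces as $\tau^{\otimes l_K}$ applied to a slot-wise factorizing product. The paper first carries this out for a general tensor $\bigotimes A_{\sigma(i),\sigma(j)}$ and then specializes to the pieces of the expanded derivatives by multilinearity, whereas you argue directly at the level of expanded derivatives and compare slot by slot; this is a difference in presentation only, not in substance.
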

\vspace{0,4cm}

Note that in the lemma above, we use the convention that if $C_{\sigma^{-1}(p)}^*(K)\neq \{\sigma^{-1}(p)\}$, then
\begin{align*}
	\bigotimes_{q\in \sigma\left(C_{\sigma^{-1}(p)}^{**}(K)\right)} \partial_{z_{\sigma^{-1}(p),\sigma^{-1}(q)}} &= \bigotimes_{q\in C_{\sigma^{-1}(p)}^{**}(K)} \partial_{z_{\sigma^{-1}(p),q}} \\
	&= \partial_{z_{\sigma^{-1}(p),\sigma^{-1}(p)^+}}\otimes \partial_{z_{\sigma^{-1}(p),{\sigma^{-1}(p)^+}^+}}\otimes\cdots\otimes \partial_{z_{\sigma^{-1}(p),{\sigma^{-1}(p)^-}^-}}.
\end{align*}

\begin{proof}
	
	Thanks to Lemma \ref{sdtyuikmn}, one has that for any $A_{i,j}\in \C\langle X_1,\dots,X_d\rangle$,
	\begin{align*}
		&H^K_z\left( \bigotimes_{ i\in [1,n]}\ \left(\bigotimes_{j\in C_i^*(K)} A_{\sigma(i),\sigma(j)} \right)\right) \\
		&=  \prod_{i\in\widehat{K}} \tau\left(A_{\sigma(i),\sigma(i)}(x)\right) \times \prod_{\substack{\{i,j\}\in K }} \tau\left( \widetilde{A}_{\sigma(i),\sigma(j)}(x) \widetilde{\Lambda}_{i,j}^z \widetilde{A}_{\sigma(j),\sigma(i)}(x) \right),
	\end{align*}
	where $\widetilde{\Lambda}_{i,j}^z$ is an operator on the full Fock space of norm smaller than $1$ since for any $i,j$, $\norm{\Delta_{\{i,j\}}} \leq \kappa_{\{i,j\}} \leq 1$, and the norm of a left or right multiplication operator is equal to one. Besides thanks to Equation \eqref{idjnvcdskmvcos0},\eqref{idjnvcdskmvcos},\eqref{idjnvcdskmvcos2} and \eqref{idjnvcdskmvcos3}, one can also pick $\widetilde{\Lambda}_{i,j}^z$ such that if $\sigma(i) < \sigma(j)$, then $\widetilde{A}_{\sigma(i),\sigma(j)}(x) = {A}_{\sigma(i),\sigma(j)}(x)$ and $\widetilde{A}_{\sigma(j),\sigma(i)}(x)= {A}_{\sigma(j),\sigma(i)}(x)$, whereas if $\sigma(i)>\sigma(j)$, then $\widetilde{A}_{\sigma(i),\sigma(j)}(x) = {A}_{\sigma(j),\sigma(i)}(x)$ and $\widetilde{A}_{\sigma(j),\sigma(i)}(x)= {A}_{\sigma(i),\sigma(j)}(x)$. Consequently,
	\begin{align*}
		&H^K_z\left( \bigotimes_{ i\in [1,n]}\ \left(\bigotimes_{j\in C_i^*(K)} A_{\sigma(i),\sigma(j)} \right)\right) \\
		&=  \prod_{p\in\sigma(\widehat{K})} \tau\left(A_{p,p}(x)\right) \times \prod_{\substack{\{\sigma^{-1}(p),\sigma^{-1}(q)\}\in K }} \tau\left( \widetilde{A}_{p,q}(x) \widetilde{\Lambda}_{\sigma^{-1}(p),\sigma^{-1}(q)}^z \widetilde{A}_{q,p}(x) \right) \\
		&=  \prod_{p\in [1,n]}\ \prod_{\substack{q \in \sigma(C_{\sigma^{-1}(p)}^*(K)) \\ \text{such that }p=q }} \tau\left( A_{p,p}(x) \right) \times \prod_{\substack{q \in \sigma(C_{\sigma^{-1}(p)}^*(K)) \\ \text{such that }p<q }} \tau\left( A_{p,q}(x) \widetilde{\Lambda}^z_{\sigma^{-1}(p),\sigma^{-1}(q)} A_{q,p}(x) \right),
	\end{align*}
	Hence this prompt us to define,
	$$u_{p,q}^{\sigma,K}({A}_{p,q}(x)) = A_{p,q}(x) \bigotimes_{\substack{ \{a,b\} \neq \{\sigma^{-1}(p),\sigma^{-1}(q)\}  \text{ such that} \\ a\in [1,n], b\in C^*_a(K) }} \id_{\CC^*(x)},$$
	$$
	\Lambda_{p,q}^z = \left\{
	\begin{array}{ll}
		\widetilde{\Lambda}^z_{\sigma^{-1}(p),\sigma^{-1}(q)} \bigotimes_{\substack{  \{a,b\} \neq \{ \sigma^{-1}(p),\sigma^{-1}(q) \} \\ \text{such that } a\in [1,n], b\in C^*_a(K) }} \id_{B(\F)} & \mbox{if } p<q \\
		\id_{B(\F)}^{\otimes l_K} & \mbox{else.}
	\end{array}
	\right.
	$$
	Finally, one has that
	\begin{align*}
		&H^K_z\left( \bigotimes_{ i\in [1,n]}\ \left(\bigotimes_{j\in C_i^*(K)} A_{\sigma(i),\sigma(j)} \right)\right) = \tau^{\otimes l_K} \left( \prod_{p\in [1,n]}\ \prod_{q \in \sigma(C_{\sigma^{-1}(p)}^*(K))} u_{p,q}^{\sigma,K}({A}_{p,q}(x)) \Lambda_{p,q}^z \right), \nonumber
	\end{align*}
	Consequently, let us now assume that the polynomials $A_i$ are monomials, then with the convention $\prod\limits_{j\in C_i^{**}(K)} D_j= D_{i^+} D_{{i^+}^+} \dots D_{{i^-}^-}$, one has that
	\begin{align*}
		& H^K_z\left( \bigotimes_{ i\in [1,n]}\ \left(\bigotimes_{j\in C_i^{**}(K)} \partial_{z_{i,j}}\right)(A_{\sigma(i)})\right) \\
		&= \sum_{\forall i\in [1,n],\ A_{\sigma(i)} = \left(\prod\limits_{j\in C_i^{**}(K)} A_{\sigma(i),\sigma(j)} X_{z_{i,j}}\right) A_{\sigma(i),\sigma(i^-)}} H^K_z\left( \bigotimes_{ i\in [1,n]}\ \left(\bigotimes_{j\in C_i^*(K)} A_{\sigma(i),\sigma(j)} \right)\right) \\
		&= \sum_{\forall i\in [1,n],\ A_{\sigma(i)} = \left(\prod\limits_{j\in C_i^{**}(K)} A_{\sigma(i),\sigma(j)} X_{z_{i,j}}\right) A_{\sigma(i),\sigma(i^-)}} \tau^{\otimes l_K} \left( \prod_{p\in [1,n]}\ \prod_{q \in \sigma(C_{\sigma^{-1}(p)}^*(K))} u_{p,q}^{\sigma,K}({A}_{p,q}(x)) \Lambda_{p,q}^z \right) \\
		&= \tau^{\otimes l_K} \left( \prod_{p\in [1,n]}\ \sum_{ A_p = \left(\prod\limits_{q\in \sigma\left(C_{\sigma^{-1}(p)}^{**}(K)\right)} A_{p,q} X_{z_{\sigma^{-1}(p),\sigma^{-1}(q)}}\right) A_{p,\sigma(\sigma^{-1}(p)^-)}}  \prod_{q \in \sigma(C_{\sigma^{-1}(p)}^*(K))} u_{p,q}^{\sigma,K}({A}_{p,q}(x)) \Lambda_{p,q}^z \right) \\
		&= \tau^{\otimes l_K} \left( u_z^{\sigma,K}\left( \bigotimes_{p\in [1,n]} \ \left(\bigotimes_{q\in \sigma\left(C_{\sigma^{-1}(p)}^{**}(K)\right)} \partial_{z_{\sigma^{-1}(p),\sigma^{-1}(q)}}\right)(A_p) \right) \right),
	\end{align*}
	where
	\begin{align*}
		u^{\sigma,K}_z\left( \bigotimes_{ p\in [1,n]}\ \bigotimes_{q\in \sigma(C_{\sigma^{-1}(p)}^*(K))} A_{p,q}\right) := \prod_{p\in [1,n]} \prod_{q \in \sigma(C_{\sigma^{-1}(p)}^*(K))} u_{p,q}^{\sigma,K}({A}_{p,q}(x)) \Lambda_{p,q}^z.
	\end{align*}
	
\end{proof}

\subsection{The norm of polynomials in free semicircular variables}

This subsection is dedicated to proving that one can control the norm of any coefficients of a polynomial in free semicircular variables by the norm of this polynomial up to a universal constant which only depends on the degree of our polynomial.

\begin{lemma}
	\label{xsdfgyujk}
	There exist a constant $C_n$ such that for every polynomial 
	$$P=\sum_{M \text{ monomial}} a_M\otimes M \in\A\langle X_1,\dots,X_d\rangle$$
	of degree at most $n$, and for every coefficient $a_M$ of $P$,
	\begin{equation}
		\label{vldksmvls}
		\norm{a_M} \leq C_n \norm{P(x)},
	\end{equation}
	where $x$ is a $d$-tuple of semicircular variables.
\end{lemma}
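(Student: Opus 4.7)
The strategy is to construct, for each monomial $M$ of degree at most $n$, a ``dual'' polynomial $Q_M\in\mathcal{P}_n:=\C\langle X_1,\dots,X_d\rangle_{\leq n}$ and then extract $a_M$ from $P(x)$ by applying the bounded slice map $\id_{\A}\otimes\tau$ against $1\otimes Q_M(x)^{*}$. The key auxiliary object is the sesquilinear form $\langle P_1,P_2\rangle_{\tau}:=\tau(P_1(x)^{*}P_2(x))$ on $\mathcal{P}_n$. Positivity is immediate; non-degeneracy follows from the injectivity of $P\mapsto P(x)$ on $\mathcal{P}_n$, which one sees by writing $x_i=l_i+l_i^{*}$ on the full Fock space: the top-Fock-degree component of $P(x)\Omega$ is $\sum_{M=X_{i_1}\cdots X_{i_k}}c_M\,e_{i_1}\otimes\cdots\otimes e_{i_k}$, where $P=\sum_M c_MM$ and $k=\deg P$. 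Hence the Gram matrix of the monomial basis with respect to $\langle\cdot,\cdot\rangle_{\tau}$ is invertible and admits a unique dual basis $\{Q_M\}_M\subset\mathcal{P}_n$ satisfying $\langle Q_M,M'\rangle_{\tau}=\delta_{M,M'}$.

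Given this, for $P=\sum_M a_M\otimes M\in\A\otimes\mathcal{P}_n$, a direct computation yields
\begin{equation*}
(\id_{\A}\otimes\tau)\!\bigl((1\otimes Q_M(x)^{*})\,P(x)\bigr)=\sum_{M'}a_{M'}\,\tau\bigl(Q_M(x)^{*}M'(x)\bigr)=\sum_{M'}\langle Q_M,M'\rangle_{\tau}\,a_{M'}=a_M.
\end{equation*}
By Proposition \ref{docmmfsokcm}, $\id_{\A}\otimes\tau$ has norm one on $\A\otimes_{\min}\CC^{*}(x)$, so submultiplicativity of the minimal tensor norm gives
\begin{equation*}
\norm{a_M}\leq\norm{(1\otimes Q_M(x)^{*})\,P(x)}\leq\norm{Q_M(x)}\,\norm{P(x)}.
\end{equation*}
Taking $C_n:=\max_{\deg M\leq n}\norm{Q_M(x)}$ concludes the proof; this is a finite maximum over finitely many fixed polynomials evaluated in a fixed free semicircular system, so it depends only on $n$ (and $d$).

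The only ingredient that is not pure finite-dimensional linear algebra is the boundedness of the slice map $\id_{\A}\otimes\tau$ on the minimal tensor product, but that is precisely the content of Proposition \ref{docmmfsokcm} already established in the excerpt. Consequently no significant obstacle remains: the injectivity of evaluation at $x$ on polynomials of bounded degree (via the Fock-space top-degree argument) produces the dual basis, and Proposition \ref{docmmfsokcm} then transfers the scalar identity to an $\A$-valued operator inequality.
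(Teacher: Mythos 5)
Your proof is correct, but it follows a genuinely different route than the paper's. The paper compresses $P(x)$ by the rank-one Fock-space projectors $\id_{\A}\otimes P_{e_{i_1}\otimes\cdots\otimes e_{i_l}}$ and $\id_{\A}\otimes P_{\Omega}$, observes that this kills all monomials of strictly smaller degree and produces $a_M\otimes|e_{i_1}\otimes\cdots\otimes e_{i_l}\rangle\langle\Omega|$ from $M$ itself plus contributions only from monomials of strictly larger degree, and then runs a downward induction on $\deg M$ to peel these contributions off via the triangle inequality. You instead build a dual basis $\{Q_M\}$ for $\langle P_1,P_2\rangle_\tau=\tau(P_1(x)^*P_2(x))$ on the finite-dimensional space $\mathcal{P}_n$, whose non-degeneracy you justify with essentially the same top-Fock-degree observation, and then extract $a_M$ in one shot via the slice map $\id_{\A}\otimes\tau$ applied to $(1\otimes Q_M(x)^*)P(x)$, invoking Proposition \ref{docmmfsokcm} for boundedness. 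Your version is cleaner: it avoids the induction and gives an explicit closed form $C_n=\max_M\norm{Q_M(x)}$. The trade-off is that it relies on Proposition \ref{docmmfsokcm}, which needs $\A$ to carry a faithful trace; the paper's argument only uses the contractivity of compression by projections in $\A\otimes_{\min}B(\F(\Hi))$ and so applies to an arbitrary unital $\CC^*$-algebra $\A$. In the setting of the paper this is a harmless difference since $\A$ is always assumed to be a faithful tracial $\CC^*$-algebra, but it is a real loss of generality in principle.
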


\begin{proof}
	
	Let $M=X_{i_1}\cdots X_{i_l}$ be a monomial. With $z$ an element of the Fock space, and $P_z$ the orthogonal projection on this vector,
	$$ \norm{P(x)} \geq \norm{ \id_{\A}\otimes P_{e_{i_1}\otimes\cdots\otimes e_{i_l}} P(x) \id_{\A}\otimes P_{\Omega}}.$$
	But for any monomial $N$ of degree strictly smaller than $l$, the vector $N(x)\Omega$ is an element of
	$$ \bigoplus_{n= 0}^{l-1} \Hi^{\otimes n},$$
	hence $P_{e_{i_1}\otimes\cdots\otimes e_{i_l}} N(x) P_{\Omega}=0$. Similarly, if $N=X_{j_1}\cdots X_{j_l}$ is a monomial of degree $l$, then 
	$$ P_{e_{i_1}\otimes\cdots\otimes e_{i_l}} (N(x) \Omega - e_{j_1}\otimes\cdots\otimes e_{j_l}) =0.$$
	Consequently,
	$$P_{e_{i_1}\otimes\cdots\otimes e_{i_l}} N(x) P_{\Omega}= \delta_{N=M}\ |e_{i_1}\otimes\cdots\otimes e_{i_l}\rangle \langle \Omega|,$$
	thus
	$$ \norm{P(x)} \geq \norm{ a_M\otimes |e_{i_1}\otimes\cdots\otimes e_{i_l}\rangle \langle \Omega| + \sum_{N \text{ monomial},\ \deg N > \deg M	} a_M\otimes P_{e_{i_1}\otimes\cdots\otimes e_{i_l}} N(x) P_{\Omega}}. $$
	Hence,
	\begin{align*}
		\norm{a_M} &= \norm{ a_M\otimes |e_{i_1}\otimes\cdots\otimes e_{i_l}\rangle \langle \Omega|} \\
		&\leq \norm{P(x)} + \norm{\sum_{N \text{ monomial},\ \deg M > \deg N	} a_M\otimes P_{e_{i_1}\otimes\cdots\otimes e_{i_l}} N(x) P_{\Omega}} \\
		&\leq \norm{P(x)} + \sum_{N \text{ monomial},\ \deg M > \deg N	} \norm{ a_M} 2^{\deg N}.
	\end{align*}
	In particular, if $\deg M = \deg P$, then $\norm{a_M} \leq \norm{P(x)}$. And the conclusion follows by induction on the degree of $M$ thanks to the equation above.
	
\end{proof}

\section{Proof of the main theorems}

This section is dedicated to proving the main results of this paper that we listed in the introduction. Note that although Theorem \ref{detcase} is a more general statement than Theorem \ref{popopohdvc}, and thus that we could simply prove directly Theorem \ref{detcase}, we instead chose to first prove Theorem \ref{popopohdvc}, and then to prove the more general result by detailing where the proof differs. We do so due to the technicalities involved with those proofs.

\subsection{A few results on the coefficients of the expansion}

In this subsection, we assembles the different lemmas of the previous section to deduce some estimates on the coefficients of the expansion of Lemma \ref{3apparition}.

\begin{lemma}
	
	\label{gfdcvbnjuytr}
	
	With $m_K=\widehat{K}+c_K$, one can find the following objects,
	\begin{itemize}
		\item a constant $c_{s_1,\dots,s_p}$ smaller than $(4p)!$,
		\item partitions $\pi_l^{s_1,\dots,s_p}$ of $[1,m_K-1]$, which we shall simply denote by $\pi_l$, whose elements are all pairs or singleton,
		\item operators $u^{l,K,T_p}_z: \C\langle X_1,\dots,X_d\rangle^{\otimes m_K}\to \CC^*(x)^{\otimes l_K}$, such that
		\begin{align*}
			u^{l,K,T_p}_z\left( \bigotimes_{ s\in [1,m_K]}\ A_{s}\right) := \prod_{s\in [1,m_K]} u_{s}^{l,K}({A}_{s}(x)) \Lambda^{l,T_p,K}_{z,s},
		\end{align*}
		where $\Lambda^{l,T_p,K}_{z,s} \in B(\F)^{\otimes l_K}$ has norm smaller than one, and $u_{s}^{l,K} : \CC^*(x)\to \CC^*(x)^{\otimes l_K}$ is a map of the following form
		$$ u_{s}^{l,K}(a) = \id_{\CC^*(x)}^{\otimes n} \otimes a \otimes \id_{\CC^*(x)}^{\otimes l_K-n-1},$$
		for a given $n$ which depends on $s,l$ and $K$,
	\end{itemize}
	such that for any $P\in \C\langle X_1,\dots, X_d\rangle$,
	\begin{align*}
		&\tau\left(\left(L^{p}_{s_p} \dots L^{1}_{s_1}\right)(P)\left(x^{T_p}\right)\right) \\
		&= \frac{1}{2^p} \sum_{l=1}^{c_{s_1,\dots,s_p}} \sum_{K\in \mathcal{K}_{4p+1}}\ \sum_{\substack{1\leq z_1,\dots,z_{m_K-1} \leq d \\ \forall E\in \pi_l, \forall n,m\in E, z_n=z_m}} \tau^{\otimes l_K} \left( u^{l,K,T_p}_z\left(\left(\partial_{z_1}\otimes \cdots \otimes \partial_{z_{m_K-1}}\right)(P) \right) \right).
	\end{align*}
	
\end{lemma}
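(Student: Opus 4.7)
The plan is to chain together Lemmas \ref{poiuytrew}, \ref{sdtyuikmn}, and \ref{nkytrdds} in succession. First, I would invoke formula \eqref{asdfghjkl2} of Lemma \ref{poiuytrew} to expand $\left(L^p_{s_p} \cdots L^1_{s_1}\right)(P)$ as a sum over indices $(i_1,\ldots,i_{4p}) \in [1,d]^{4p}$ and $l \in [1, \widehat{c}_{s_1,\ldots,s_p}]$ of quantities
\[ \delta_{i_{\widehat{\pi}_l(1)} = i_{\widehat{\pi}_l(2)}} \cdots \delta_{i_{\widehat{\pi}_l(4p-1)} = i_{\widehat{\pi}_l(4p)}}\, \ev_l \circ \widehat{\sigma}_l \circ m \circ (\partial_{i_1} \otimes \cdots \otimes \partial_{i_{4p}})(P). \]
After taking $\tau\big(\cdot(x^{T_p})\big)$, the composition $\ev_l \circ \widehat{\sigma}_l \circ m$ inside the trace yields a product of $4p$ factors, one of which is a product $B_{4p+1}B_1$ of two of the $4p+1$ tensor factors of $(\partial_{i_1} \otimes \cdots \otimes \partial_{i_{4p}})(P)$ arising from the merge $m$. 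By cyclicity of $\tau$, this equals the trace of a product of exactly $4p+1$ polynomials arranged in some cyclic order, each evaluated at a corresponding $x^{T_p}_{\widehat{J}^l_\cdot}$; call the resulting permutation $\sigma_l \in \mathbb{S}_{4p+1}$.

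Next, I would apply Lemma \ref{sdtyuikmn} with $n = 4p+1$ to this trace. This produces, for each configuration $K \in \mathcal{K}_{4p+1}$ and each $z \in [1,d]^{\widetilde{K}}$, an expression of the form $H^K_z$ applied to iterated derivatives of the $4p+1$ tensor factors (taken in the order dictated by $\sigma_l$). Invoking Lemma \ref{nkytrdds} with the permutation $\sigma_l$ then returns the polynomials to their natural slots and rewrites $H^K_z$ using operators of the form $u^{\sigma_l, K}_z$, which already satisfy the structure demanded here: each factor is a single-slot polynomial evaluation multiplied by an operator on $B(\F)^{\otimes l_K}$ of norm at most one. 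The operator $u^{l, K, T_p}_z$ of the lemma is then obtained by further composing with evaluation at the appropriate $x^{T_p}$-system, and the norm bound on $\Lambda^{l, T_p, K}_{z, s}$ is inherited from the corresponding bound in Lemma \ref{nkytrdds}.

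Finally, I would tally the $z$-variables. The $4p$ derivative indices $i_1, \ldots, i_{4p}$ from Lemma \ref{poiuytrew} (constrained into $2p$ pairs by $\widehat{\pi}_l$), together with the $|\widetilde{K}| = \sum_i |C_i^{**}(K)| = c_K + \#\widehat{K} - (4p+1) = m_K - 4p - 1$ unconstrained derivative indices $z_{i,j}$ produced by Lemma \ref{sdtyuikmn}, sum to $m_K - 1$. After renaming them uniformly as $z_1, \ldots, z_{m_K-1}$, they inherit a partition $\pi_l$ of $[1, m_K-1]$ consisting of the $2p$ pairs (from $\widehat{\pi}_l$) and the $m_K - 4p - 1$ singletons (from the free derivative indices). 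Setting $c_{s_1,\ldots,s_p} := \widehat{c}_{s_1,\ldots,s_p}$ yields $c_{s_1,\ldots,s_p} \leq (4p)!$ by Lemma \ref{poiuytrew}.

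The hard part is the bookkeeping: precisely tracking how $\widehat{\sigma}_l \in \mathbb{S}_{4p}$ extends to $\sigma_l \in \mathbb{S}_{4p+1}$ after absorbing the merge $m$ via trace cyclicity, and verifying that the two sources of $z$-indices combine exactly into a partition of $[1, m_K-1]$ of the stated form. The tensor-product structure of $u^{l, K, T_p}_z$ and the norm bound on $\Lambda^{l, T_p, K}_{z, s}$ follow directly from Lemma \ref{nkytrdds}, so no new estimates are required.
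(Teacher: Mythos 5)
Your proposal follows essentially the same route as the paper's own proof: you chain Lemma \ref{poiuytrew} (formula \eqref{asdfghjkl2}), absorb the merge $m$ via traciality into a permutation $\sigma_l\in\mathbb{S}_{4p+1}$, apply Lemma \ref{sdtyuikmn}, then Lemma \ref{nkytrdds} with $\sigma_l$, and correctly tally $4p + \#\widetilde{K} = m_K - 1$ derivative indices partitioned into $2p$ pairs (from $\widehat{\pi}_l$) and $m_K - 4p - 1$ singletons, setting $c_{s_1,\dots,s_p}=\widehat{c}_{s_1,\dots,s_p}$. One small imprecision: the $T_p$-dependence of $u^{l,K,T_p}_z$ does not come from ``further composing with evaluation'' (the evaluation at $x$ is already built into $u^{\sigma_l,K}_z$ from Lemma \ref{nkytrdds}), but from the covariances $\kappa_{\{i,j\}}$ that enter the $\Delta_{\{i,j\}}$ operators inside $\Lambda^{l,T_p,K}_{z,s}$; this does not affect the validity of the argument.
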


\begin{proof}
	
	Thanks to Lemma \ref{poiuytrew}, with the same notation, one has that 
	\begin{align*}
		\left(L^{p}_{s_p} \dots L^{1}_{s_1}\right)(P) &= \frac{1}{2^p} \sum_{l=1}^{\widehat{c}_{s_1,\dots,s_p}} \sum_{\substack{1\leq z_1,\dots,z_{4p} \leq d \\ z_{\widehat{\pi}_l(1)} = z_{\widehat{\pi}_l(2)}, \dots, z_{\widehat{\pi}_l(4p-1)} = z_{\widehat{\pi}_l(4p)}}} \ev_l\circ\widehat{\sigma}_l\circ m\circ\left(\partial_{z_1}\otimes \cdots \otimes \partial_{z_{4p}}\right)(P), \nonumber
	\end{align*}
	Consequently one can find a permutation $\sigma_l\in \mathbb{S}_{4p+1}$ and $d$-tuples of free semicircular variables $x^1,\dots,x^{4p+1}$ whose covariance depends only on $T_p$ such that for any $A_i\in \C\langle X_1,\dots, X_d\rangle$,
	$$ \tau\left(\left(\ev_l\circ\widehat{\sigma}_l\circ m\circ\left(A_1\otimes \cdots \otimes A_{4p+1}\right)\right)(x^{T_p})\right) = \tau\left(A_{\sigma_l(1)}(x^1) \cdots A_{\sigma_l(4p+1)}(x^{4p+1})\right) $$
	Hence, thanks to Lemma \ref{sdtyuikmn}, one has,
	\begin{align*}
		&\tau\left(\left(\ev_l\circ\widehat{\sigma}_l\circ m\circ\left(\partial_{z_1}\otimes \cdots \otimes \partial_{z_{4p}}\right)(P)\right)(x^{T_p})\right) \\
		&= \sum_{K\in \mathcal{K}_{4p+1}}\ \sum_{z\in [1,d]^{\widetilde{K}}}\ H^{K}_z\left( \bigotimes_{i\in [1,4p+1]}\ \left(\bigotimes_{j\in C_i^{**}(K)} \partial_{z_{i,j}}\right)\left(\sigma_l\left(\left(\partial_{z_1}\otimes \cdots \otimes \partial_{z_{4p}}\right)(P)\right)\right) \right). \\
	\end{align*}
	And thanks to \ref{nkytrdds}, one has that
	\begin{align*}
		&\tau\left(\left(\ev_l\circ\widehat{\sigma}_l\circ m\circ\left(\partial_{z_1}\otimes \cdots \otimes \partial_{z_{4p}}\right)(P)\right)(x^{T_p})\right) \\
		&= \sum_{K\in \mathcal{K}_{4p+1}}\ \sum_{z\in [1,d]^{\widetilde{K}}}\ \tau^{\otimes l_K} \left(\rule{0cm}{1cm}\right. u^{\sigma_l,K,T_p}_z \left(\rule{0cm}{1cm}\right. \bigotimes_{a\in [1,4p+1]} \ \left(\rule{0cm}{1cm}\right.\bigotimes_{b\in \sigma_l\left(C_{\sigma_l^{-1}(a)}^{**}(K)\right)} \partial_{z_{\sigma_l^{-1}(a),\sigma_l^{-1}(b)}}\left.\rule{0cm}{1cm}\right) \\ 
		&\quad\quad\quad\quad\quad\quad\quad\quad\quad\quad\quad\quad\quad\quad\quad\quad\quad\quad\quad\quad\quad\quad\quad\quad\quad\quad\quad\quad\quad\quad\quad\quad \left(\partial_{z_1}\otimes \cdots \otimes \partial_{z_{4p}}\right)(P) \left.\rule{0cm}{1cm}\right) \left.\rule{0cm}{1cm}\right).
	\end{align*}
	Hence with $c_{s_1,\dots,s_p}=\widehat{c}_{s_1,\dots,s_p}$,
	\begin{align*}
		&\tau\left(\left(L^{p}_{s_p} \dots L^{1}_{s_1}\right)(P)\left(x^{T_p}\right)\right) = \frac{1}{2^p} \sum_{l=1}^{c_{s_1,\dots,s_p}} \sum_{K\in \mathcal{K}_{4p+1}}\ \sum_{\substack{1\leq z_1,\dots,z_{4p} \leq d \\ z_{\widehat{\pi}_l(1)} = z_{\widehat{\pi}_l(2)}, \dots, z_{\widehat{\pi}_l(4p-1)} = z_{\widehat{\pi}_l(4p)}}} \sum_{z\in [1,d]^{\widetilde{K}}} \\
		& \tau^{\otimes l_K} \left( u^{\sigma_l,K,T_p}_z\left( \bigotimes_{a\in [1,4p+1]} \ \left(\bigotimes_{b\in \sigma\left(C_{\sigma^{-1}(a)}^{**}(K)\right)} \partial_{z_{\sigma^{-1}(a),\sigma^{-1}(b)}}\right)\left(\partial_{z_1}\otimes \cdots \otimes \partial_{z_{4p}}\right)(P) \right) \right).
	\end{align*}
	To begin with, note that the number of noncommutative differential in the formula above is the following,
	$$ 4p + \sum_{i=1}^{4p+1} \# C_{\sigma^{-1}(i)}^{**}(K) = 4p + \sum_{i=1}^{4p+1} \# C_i^*(K)-1 = \# \widehat{K} + c_K -1 = m_K-1.$$
	Consequently, one can find
	\begin{itemize}
		\item a partition $\pi_l$ of $[1,m_K-1]$ whose element are either pair or singleton,
		\item a family of operators $u^{l,K,T_p}_z: \C\langle X_1,\dots,X_d\rangle^{\otimes m_K}\to \CC^*(x)^{\otimes l_K}$, such that
		\begin{align*}
			u^{l,K,T_p}_z\left( \bigotimes_{ s\in [1,m_K]}\ A_{s}\right) := \prod_{s\in [1,m_K]} u_{s}^{l,K}({A}_{s}(x)) \Lambda^{l,T_p,K}_{z,s},
		\end{align*}
		where $\Lambda^{l,T_p,K}_{z,s} \in B(\F)^{\otimes l_K}$ has norm smaller than one, and $u_{s}^{l,K} : \CC^*(x)\to \CC^*(x)^{\otimes l_K}$ is a map of the following form
		$$ u_{s}^{l,K}(a) = \id_{\CC^*(x)}^{\otimes n} \otimes a \otimes \id_{\CC^*(x)}^{\otimes l_K-n-1},$$
		for a given $n$ which depends on $s,l$ and $K$,
	\end{itemize}
	
	\noindent such that,
	\begin{align*}
		&\tau\left(\left(L^{p}_{s_p} \dots L^{1}_{s_1}\right)(P)\left(x^{T_p}\right)\right) \\
		&= \frac{1}{2^p} \sum_{l=1}^{c_{s_1,\dots,s_p}} \sum_{K\in \mathcal{K}_{4p+1}}\ \sum_{\substack{1\leq z_1,\dots,z_{m_K-1} \leq d \\ \forall E\in \pi_l, \forall n,m\in E, z_n=z_m}} \tau^{\otimes l_K} \left( u^{l,K,T_p}_z\left(\left(\partial_{z_1}\otimes \cdots \otimes \partial_{z_{m_K-1}}\right)(P) \right) \right).
	\end{align*}
		
\end{proof}

Thus, we can now formulate an operator-valued version of the expansion of Lemma \ref{3apparition} which will be more practical to handle for our purpose.

\begin{prop}
	
	\label{lvleavalvlkd}
	
	Given the following objects,
	\begin{itemize}
		\item $X^N=\left(X_1^N,\dots,X_d^N\right)$ a $d$-tuple of GUE random matrices,
		\item $P\in\A\langle X_1,\dots,X_d\rangle$ a polynomial of degree $n$,
	\end{itemize}
	With the notations of previous lemmas,
	\begin{align*}
		\E\Big[\id_{\A}\otimes\ts_N\Big(P^k\left(X^N\right) & \Big)\Big] = (\id_{\A}\otimes\tau)\left(P(x)^k\right) \\
		&+ \sum_{p= 1}^{nk/4} \frac{1}{2^{p}N^{2p}} \int_{A_p} e^{-t_1-\cdots-t_{2p}} \sum_{l=1}^{c_{T_p}} \sum_{K\in \mathcal{K}_{4p+1}}\ \sum_{\substack{1\leq z_1,\dots,z_{m_K-1} \leq d \\ \forall E\in \pi_l, \forall n,m\in E, z_n=z_m}} \\
		&\quad\quad\quad\quad\quad\quad \id_{\A}\otimes\tau^{\otimes l_K} \left(v^{l,K,T_p}_z\Big( \left(\partial_{z_1}\circ\dots\circ \partial_{z_{m_K-1}}\left(P^k\right)\right) \Big) \right)dt_1\dots dt_{2p}, \nonumber
	\end{align*}
	where $c_{T_p}\leq (4p)!$ and $v^{l,K,T_p}_z: \A\langle X_1,\dots,X_d\rangle^{\otimes m_K} \to \A\otimes \CC^*(x)^{\otimes l_K}$ is a linear map such that for all $P_s\in \A\langle X_1,\dots,X_d\rangle$,
	$$ \norm{v^{l,K,T_p}_z\left( \bigotimes_{ s\in [1,m_K]}\ P_{s}\right)} \leq \prod_{ s\in [1,m_K]}\ \norm{P_{s}(x)}.$$
	Besides,
	\begin{align*}
		\partial_{z_1}\circ\dots\circ \partial_{z_{m_K-1}}\left(P^k\right) =&\ \sum_{x=1}^{m_K-1} \sum_{r_1+\dots+r_x = m_K-1} \sum_{1\leq k_1<\dots< k_x\leq k} P^{k_1-1} \left(\partial_{z_1}\circ\dots\circ \partial_{z_{r_1}}P \right) P^{k_2-k_1-1}\dots\nonumber \\ 
		&\quad\quad\quad\quad\quad\quad\quad\quad\quad \dots P^{k_{x} - k_{x-1}-1} \left(\partial_{z_{r_1+\dots+r_{x-1}}}\circ \dots\circ\partial_{z_{m_K-1}} P\right) P^{k - k_{x}},
	\end{align*}
	with the convention that if $K,K'\in\A\langle X_1,\dots,X_d\rangle$, $a\in \A$, $A_1,\dots,A_l\in\C\langle X_1,\dots X_d\rangle$,
	\begin{align*}
		K \left(a_1\otimes A_1\otimes\dots\otimes A_l \right) K' = (K (a_1\otimes A_1))\otimes (1\otimes A_2) \otimes \cdots\otimes (1\otimes A_{l-1}) \otimes((a_1\otimes A_1)K').
	\end{align*}

\end{prop}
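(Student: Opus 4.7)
The plan is to combine the scalar expansion of Lemma \ref{3apparition} with the reformulation in Lemma \ref{gfdcvbnjuytr}, and promote the result to $\A$-valued polynomials by linearity. I would write $P^k = \sum_\alpha a_\alpha\otimes Q_\alpha$ with $a_\alpha\in\A$ and $Q_\alpha$ a scalar monomial in $X_1,\dots,X_d$, apply Lemma \ref{3apparition} to each $Q_\alpha$, and sum with coefficients $a_\alpha$. This converts $\E[\id_\A\otimes\ts_N(P^k(X^N))]$ into an expansion whose order-zero term is $(\id_\A\otimes\tau)(P(x)^k)$ and whose order-$N^{-2p}$ term involves $(\id_\A\otimes\tau)$ applied to $(\id_\A\otimes L^{T_p}\cdots L^{T_1})(P^k)(x^{T_p})$. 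Since $P^k$ has degree at most $nk$ and each $L^{T_p}$ removes four degrees, the sum is finite with $p\leq nk/4$.

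Into each such coefficient I would substitute the conclusion of Lemma \ref{gfdcvbnjuytr}, which for scalar $Q$ expresses the integrand as a sum over configurations $K\in\mathcal{K}_{4p+1}$ and index assignments $z$ of $\tau^{\otimes l_K}(u^{l,K,T_p}_z((\partial_{z_1}\otimes\cdots\otimes\partial_{z_{m_K-1}})(Q)))$. For $\A$-valued $P^k$, the natural promotion replaces $u^{l,K,T_p}_z$ by $v^{l,K,T_p}_z := \id_\A\otimes u^{l,K,T_p}_z$ (with the tensor convention displayed in the proposition specifying how $\A$-coefficients merge with the tensor slots) and the scalar derivatives by the operator-valued derivatives of Definition \ref{3applicationopv}. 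The desired bound $\norm{v^{l,K,T_p}_z(\bigotimes_s P_s)}\leq\prod_s\norm{P_s(x)}$ then follows from the structure of $u^{l,K,T_p}_z$ given by Lemma \ref{gfdcvbnjuytr}: each factor $u_s^{l,K}$ embeds $\CC^*(x)$ into a single slot of $\CC^*(x)^{\otimes l_K}$, which is isometric for the minimal tensor norm, and each $\Lambda^{l,T_p,K}_{z,s}$ has norm at most one. Replacing scalars by $\A$ and invoking submultiplicativity of the norm on $\A\otimes_{\min} B(\F^{\otimes l_K})$ then gives the claimed bound factor by factor.

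It remains to establish the expansion of $\partial_{z_1}\circ\cdots\circ\partial_{z_{m_K-1}}(P^k)$. This is an iterated application of the Leibniz rule \eqref{3leibniz}: each derivative distributes over the $k$ copies of $P$, and because the composition in Definition \ref{3applicationopv} acts sequentially on the most recently created tensor slot, any derivatives that fall on the same copy of $P$ must correspond to a contiguous block of indices in $z_1,\dots,z_{m_K-1}$. Gathering terms by the number $x$ of differentiated copies, their positions $k_1<\cdots<k_x$ among the $k$ factors, and the sizes $r_1,\dots,r_x$ of the contiguous blocks (summing to $m_K-1$) yields the displayed formula. I expect the principal obstacle to be less any single deep step than the simultaneous bookkeeping of three intertwined layers --- the $\A$-coefficients, the tensor factors created by operator-valued differentiation, and the configurations $K$ governing the trace decomposition --- and making sure they combine cleanly to yield the compact norm bound advertised for $v^{l,K,T_p}_z$.
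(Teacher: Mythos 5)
Your proposal follows exactly the route taken in the paper: expand $P^k = \sum_{M_1,\dots,M_k} a_{M_1}\cdots a_{M_k}\otimes M_1\cdots M_k$, apply Lemma \ref{3apparition} factorwise and substitute Lemma \ref{gfdcvbnjuytr}, then promote $u^{l,K,T_p}_z$ to the $\A$-valued map $v^{l,K,T_p}_z$ and read off the norm bound from the slot-embedding structure of $u_s^{l,K}$ together with $\norm{\Lambda^{l,T_p,K}_{z,s}}\le 1$. One small caveat: $v^{l,K,T_p}_z$ is not literally $\id_\A\otimes u^{l,K,T_p}_z$ (the domains differ, since the input carries $m_K$ separate $\A$-factors), but rather $v^{l,K,T_p}_z(\bigotimes_s P_s) = \prod_s (\id_\A\otimes u_s^{l,K})(P_s(x))\,\Lambda^{l,T_p,K}_{z,s}$ as in the paper; your parenthetical about the merging convention shows you have the right picture, so this is a notational rather than a mathematical gap.
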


\begin{proof}
	
	To begin with, if we write $P=\sum_M a_M\otimes M$, one has that
	$$ P^k = \sum_{M_1,\dots,M_k} a_{M_1}\dots a_{M_k} \otimes M_1\dots M_k.$$
	Thus thanks to Lemma \ref{3apparition}, one has that
	\begin{align*}
		&\E\left[\id_{\A}\otimes\ts_N\Big(M_1\dots M_k\left(X^N\right)\Big)\right] \\
		&= \sum_{p= 0}^{nk/4} \frac{1}{N^{2p}} \int_{A_p} e^{-t_1-\cdots-t_{2p}} \sum_{M_1,\dots,M_k} a_{M_1}\dots a_{M_k} \tau\Big( \left(L^{{T}_p} \dots L^{{T}_1}\right)(M_1\dots M_k) \left(x^{T_p}\right) \Big)\  dt_1\dots dt_{2p}. \nonumber
	\end{align*}
	Note that thanks to Equation \eqref{xdffgyui}, one can determine the value of $(s_1,\dots,s_p)$ given $T_p$. Thus we set $c_{T_p}:=c_{s_1,\dots,s_p}$ for the appropriate choice of $s_1,\dots,s_p$. Thanks to Lemma \ref{gfdcvbnjuytr}, we get that for $p$ larger than $1$,
	\begin{align}
		\label{sdicoscs}
		&\sum_{M_1,\dots,M_k} a_{M_1}\dots a_{M_k} \tau\Big( \left(L^{{T}_p} \dots L^{{T}_1}\right)(M_1\dots M_k) \left(x^{T_p}\right) \Big) \\
		&= \frac{1}{2^{p}} \sum_{l=1}^{c_{T_p}} \sum_{K\in \mathcal{K}_{4p+1}}\ \sum_{\substack{1\leq z_1,\dots,z_{m_K-1} \leq d \\ \forall E\in \pi_l, \forall n,m\in E, z_n=z_m}} \sum_{M_1,\dots,M_k} a_{M_1}\dots a_{M_k} \nonumber \\
		&\quad\quad\quad\quad\quad\quad\quad\quad\quad\quad\quad\quad\quad\quad\quad\quad\quad\quad\quad\quad\quad\quad \tau^{\otimes l_K} \left( u^{l,K,T_p}_z\left(\left(\partial_{z_1}\otimes \cdots \otimes \partial_{z_{m_K-1}}\right)(M_1\dots M_k) \right) \right). \nonumber
	\end{align}
	However,
	\begin{align*}
		&\sum_{M_1,\dots,M_k} a_{M_1}\dots a_{M_k} \tau^{\otimes l_K} \left( u^{l,K,T_p}_z\left(\left(\partial_{z_1}\otimes \cdots \otimes \partial_{z_{m_K-1}}\right)(M_1\dots M_k) \right) \right) \\
		&= \sum_{M_1,\dots,M_k} \sum_{x=1}^{m_K-1} \sum_{r_1+\dots+r_x = m_K-1} \sum_{1\leq k_1<\dots< k_x\leq k} a_{M_1}\dots a_{M_k} \tau^{\otimes l_K} \Big( u^{l,K,T_p}_z\Big( M_1\dots M_{k_1-1} \\
		&\quad\quad\quad\quad\quad\quad \left(\partial_{z_1}\circ\dots\circ \partial_{z_{r_1}}M_{k_1}\right) M_{k_1+1}\dots M_{k_{x}-1} \left(\partial_{z_{r_1+\dots+r_{x-1}}}\circ\dots\circ\partial_{z_{m_K-1}} M_{k_{x}}\right) M_{k_{x}+1}\dots M_{k} \Big) \Big),
	\end{align*}
	where we used the following convention, for any $A_l,B_l,C_l\in\C\langle X_1,\dots X_d\rangle$,
	\begin{align*}
		&A_1 \left(B_1\otimes C_1\right) A_2 \left(B_2\otimes C_2\right) A_3 \dots A_{m_K-1} \left(B_{m_K-1}\otimes C_{m_K-1}\right) A_{m_K} \\
		&= (A_1B_1)\otimes(C_1A_2B_2)\otimes\dots \otimes(C_{m_K-2}A_{m_K-1}B_{m_K-1}) \otimes(C_{m_K-1}A_{m_K}).
	\end{align*}
	Let us remember that,
	\begin{align*}
		u^{l,K,T_p}_z\left( \bigotimes_{ s\in [1,m_K]}\ A_{s}\right) := \prod_{s\in [1,m_K]} u_{s}^{l,K}({A}_{s}(x)) \Lambda^{l,T_p,K}_{z,s},
	\end{align*}
	where $\Lambda^{l,T_p,K}_{z,s} \in B(\F)^{\otimes l_K}$ has norm smaller than one, and $u_{s}^{l,K} : \CC^*(x)\to \CC^*(x)^{\otimes l_K}$ is a map of the following form
	$$ u_{s}^{l,K}(a) = \id_{\CC^*(x)}^{\otimes n} \otimes a \otimes \id_{\CC^*(x)}^{\otimes l_K-n-1},$$
	for a given $n$ which depends on $s,l$ and $K$. Thus if we define 
	\begin{align*}
		v^{l,K,T_p}_z\left( \bigotimes_{ s\in [1,m_K]}\ P_{s}\right) := \prod_{s\in [1,m_K]} (\id_{\A}\otimes u_{s}^{l,K})(P_{s}(x)) \Lambda^{l,T_p,K}_{z,s},
	\end{align*}
	then,
	$$ \norm{v^{l,K,T_p}_z\left( \bigotimes_{ s\in [1,m_K]}\ P_{s}\right)} \leq \prod_{ s\in [1,m_K]}\ \norm{P_{s}(x)}, $$
	and with the convention of the theorem,
	\begin{align*}
		&\sum_{M_1,\dots,M_k} a_{M_1}\dots a_{M_k} \tau^{\otimes l_K} \left( u^{l,K,T_p}_z\left(\left(\partial_{z_1}\otimes \cdots \otimes \partial_{z_{m_K-1}}\right)(M_1\dots M_k) \right) \right) \\
		&= \sum_{M_1,\dots,M_k} \sum_{x=1}^{m_K-1} \sum_{r_1+\dots+r_x = m_K-1} \sum_{1\leq k_1<\dots< k_x\leq k} \\
		&\quad\quad\quad\quad \id_{\A}\otimes \tau^{\otimes l_K} \Biggl( v^{l,K,T_p}_z\Big( (a_{M_1}\otimes M_1) \dots (a_{M_{k_1-1}}\otimes M_{k_1-1}) \left(a_{M_{k_1}}\otimes \left(\partial_{i_1}\circ\dots\circ \partial_{i_{r_1}}M_{k_1}\right)M_{k_1}\right) \nonumber\\ 
		&\quad\quad\quad\quad\quad\quad\quad\quad\quad (a_{M_{k_1+1}}\otimes M_{k_1+1})\dots (a_{M_{k_{x}-1}}\otimes M_{k_{x}-1}) \left(a_{M_{k_x}}\otimes \left(\partial_{i_{r_1+\dots+r_{x-1}}}\circ\dots\circ\partial_{i_{m_K-1}} M_{k_{x}}\right)\right) \\
		&\quad\quad\quad\quad\quad\quad\quad\quad\quad\quad\quad\quad\quad\quad\quad\quad\quad\quad\quad\quad\quad\quad\quad\quad\quad\quad\quad\quad\quad (a_{M_{k_x+1}}\otimes M_{k_x+1})\dots (a_{M_{k}}\otimes M_{k}) \Big) \Biggl) \\
		&= \sum_{x=1}^{m_K-1} \sum_{r_1+\dots+r_x = m_K-1} \sum_{1\leq k_1<\dots< k_x\leq k} \id_{\A}\otimes\tau^{\otimes l_K} \Biggl(v^{l,K,T_p}_z\Big( P^{k_1-1} \left(\partial_{i_1}\circ\dots\circ \partial_{i_{r_1}} P \right) P^{k_2-k_1-1}\dots\nonumber \\ 
		&\quad\quad\quad\quad\quad\quad\quad\quad\quad\quad\quad\quad\quad\quad\quad\quad\quad\quad\quad\quad \dots P^{k_{x} - k_{x-1}-1} \left(\partial_{i_{r_1+\dots+r_{x-1}}}\circ \dots\circ\partial_{i_{m_K-1}}P \right) P^{k - k_{x}} \Big) \Biggl) \\
		&= \id_{\A}\otimes\tau^{\otimes l_K} \Big(v^{l,K,T_p}_z\Big( \partial_{z_1}\circ\dots\circ \partial_{z_{m_K-1}}\left(P^k\right) \Big) \Big).
	\end{align*}
	
	\noindent Consequently, one has that
	\begin{align*}
		&\E\left[\id_{\A}\otimes\ts_N\Big(P^k\left(X^N\right)\Big)\right] = (\id_{\A}\otimes\tau)\left(P(x)^k\right) + \sum_{p= 1}^{nk/4} \frac{1}{2^{p}N^{2p}} \int_{A_p} e^{-t_1-\cdots-t_{2p}} \sum_{l=1}^{c_{T_p}} \sum_{K\in \mathcal{K}_{4p+1}} \\
		&\quad\quad\quad\quad\quad\quad\quad\quad\quad\quad \sum_{\substack{1\leq z_1,\dots,z_{m_K-1} \leq d \\ \forall E\in \pi_l, \forall n,m\in E, z_n=z_m}} \id_{\A}\otimes\tau^{\otimes l_K} \Big(v^{l,K,T_p}_z\Big( \partial_{z_1}\circ\dots\circ \partial_{z_{m_K-1}}\left(P^k\right) \Big) \Big) dt_1\dots dt_{2p}.
	\end{align*}
	
\end{proof}

\subsection{Proof of Theorem \ref{popopohdvc}}

	We want to use Proposition \ref{lvleavalvlkd}. First, one has that
	\begin{align}
		\label{doicdom}
		&\norm{\id_{\A}\otimes\tau^{\otimes l_K} \left(v^{l,K,T_p}_z\Big( \left(\partial_{z_1}\circ\dots\circ \partial_{z_{m_K-1}}\left(P^k\right)\right) \Big) \right)} \\
		&\leq \sum_{x=1}^{m_K-1} \sum_{r_1+\dots+r_x = m_K-1} \sum_{1\leq k_1<\dots< k_x\leq k} C^{x} \Big( \norm{P^{k_1-1}(x)} \sup_{M \text{ monomial}}\norm{a_{M}}  \nonumber \\
		&\quad\quad\quad\quad\quad\quad\quad\quad\quad\quad\quad\quad\quad \norm{P^{k_2-k_1-1}(x)}\dots  \norm{P^{k_{x} - k_{x-1}-1}(x)} \sup_{M \text{ monomial}}\norm{a_{M}} \norm{P^{k - k_{x}}(x)} \Big) \nonumber \\
		&\leq \sum_{x=1}^{m_K-1} \sum_{r_1+\dots+r_x = m_K-1} \sum_{1\leq k_1<\dots< k_x\leq k} C^{x} \norm{P(x)}^{k-x} \sup_{M \text{ monomial}} \norm{a_{M}}^x, \nonumber
	\end{align}
	where $C$ is a constant which depends only on $n$ and $d$. Indeed a polynomial of degree $n$ in $d$ variables is the sum of at most $\frac{d^{n+1}-1}{d-1}$ monomials, besides the norm of the differential of a monomial evaluated in free semicircular variables of degree $n$ is bounded by,
	$$\sup_{1\leq l\leq n} \frac{n!}{(n-l)!}2^{n-l} = 2 n!.$$
	Thus, coupled with the help of Lemma \ref{xsdfgyujk}, we get that there exists a constant $C$ which depends only on $n$ and $d$ such that,
	\begin{align}
		\label{dlmvsomv}
		&\norm{\id_{\A}\otimes\tau^{\otimes l_K} \left(v^{l,K,T_p}_z\Big( \left(\partial_{z_1}\circ\dots\circ \partial_{z_{m_K-1}}\left(P^k\right)\right) \Big) \right)} \\
		&\leq C^{m_K-1} \norm{P(x)}^k \sum_{x=1}^{m_K-1} \sum_{r_1+\dots+r_x = m_K-1} \sum_{1\leq k_1<\dots< k_x\leq k}1. \nonumber
	\end{align}
	
	\noindent Besides, the cardinal of the set
	$$ \{(x,r_1,\dots,r_x,k_1,\dots,k_x)\ |\ 1\leq k_1<\dots< k_x\leq k,\ r_1+\dots+r_x = m_K-1, x\in [1,m_K-1] \} $$
	is equal to the one of 
	$$ \{(k_1,\dots,k_{m_K-1})\ |\ 1\leq k_1\leq\dots\leq k_{m_K-1}\leq k\} $$
	since the map which to $(x,r_1,\dots,r_x,k_1,\dots,k_x)$ associates the $m_K-1$-tuple where each $k_i$ is repeated $r_i$ times is injective. Hence let us prove by induction that 
	$$ \#\{1\leq k_1\leq\dots\leq k_l\leq k\} \leq \frac{(k+l-1)^l}{l!}.$$
	Indeed it is true for $l=1$, and if it is true for a given $l$, then
	\begin{align*}
		\#\{(k_1,\dots,k_{l+1})\ |\ 1\leq k_1\leq\dots\leq k_{l+1}\leq k\} &= \sum_{k_{l+1}=1}^k \#\{(k_1,\dots,k_l)\ |\ 1\leq k_1\leq\dots\leq k_l\leq k_{l+1}\} \\
		&\leq \sum_{k_{l+1}=1}^k \frac{(k_{l+1}+l-1)^l}{l!} \\
		&\leq \int_1^{k+1} \frac{(k_{l+1}+l-1)^l}{l!} dk_{l+1} \\
		&\leq \frac{(k+l)^{l+1}}{(l+1)!}.
	\end{align*}
	Thus we get that
	\begin{align*}
		&\norm{\id_{\A}\otimes\tau^{\otimes l_K} \left(v^{l,K,T_p}_z\Big( \left(\partial_{z_1}\circ\dots\circ \partial_{z_{m_K-1}}\left(P^k\right)\right) \Big) \right)} \leq C^{m_K-1} \norm{P(x)}^k \frac{(k+m_K-2)^{m_K-1}}{(m_K-1)!}.
	\end{align*}
	But since $P^k$ is a polynomial of degree $nk$ that we differentiate $m_K-1$ times, one can always assume that $m_K-1\leq nk$, otherwise this term will be equal to $0$. Hence by replacing $C$ with $C(n+1)$, one can always assume that
	\begin{align}
		\label{xdrfdyuijhjghj}
		&\norm{\id_{\A}\otimes\tau^{\otimes l_K} \left(v^{l,K,T_p}_z\Big( \left(\partial_{z_1}\circ\dots\circ \partial_{z_{m_K-1}}\left(P^k\right)\right) \Big) \right)} \leq C^{m_K-1} \norm{P(x)}^k \frac{k^{m_K-1}}{(m_K-1)!}.
	\end{align}
	Thus thanks to Proposition \ref{lvleavalvlkd}, we get that
	\begin{align*}
		&\norm{\E\left[\id_{\A}\otimes\ts_N\Big(P^k\left(X^N\right)\Big)\right]} \\
		&\leq \norm{P(x)}^k \left(1+ \sum_{p=1}^{nk/4} \frac{(4p)!}{N^{2p}} \int_{A_p} e^{-t_1-\cdots-t_{2p}}dt_1\dots dt_{2p} \sum_{g\geq 4p+1}\ \sum_{K\in \mathcal{K}_{4p+1} \text{ such that } m_K=g } d^{g-1} C^{g-1} \frac{k^{g-1}}{(g-1)!}\right) \\
		&\leq \norm{P(x)}^k \left(1+ \sum_{p=1}^{nk/4} \frac{(4p)!}{N^{2p}} \int_{A_p} e^{-t_1-\cdots-t_{2p}}dt_1\dots dt_{2p}\ \#\mathcal{K}_{4p+1} \sum_{g= 4p+1}^{16p-2} d^{g-1} C^{g-1} \frac{k^{g-1}}{(g-1)!} \right),
	\end{align*}
	where in the last line we used Lemma \ref{ssxdcffvg} which insures that $m_K$ is smaller than $4(4p+1)-6=16p-2$. Indeed, if $K\in \mathcal{K}_{4p+1}$, then with $m=\#\widehat{K}$, one can view $K$ as an element of $\K_{4p+1-m}$, and hence
	$$ m_K \leq m + 4(4p+1-m) -6 \leq 16p-2.$$
	Besides
	\begin{align*}
		\int_{A_{p}} e^{- \sum_{r=1}^{2p} t_{r}} dt_1\dots dt_{2p} &\leq \int\limits_{0\leq t_2\leq t_4\leq \dots \leq t_{2p}} \prod_{r=1}^{p} e^{-t_{2r}}\quad dt_2 dt_4\dots dt_{p} = \frac{1}{p!}.
	\end{align*}
	So thanks once again to Lemma \ref{ssxdcffvg}, there exists a constant $C$ which only depends on $n$ and $d$ such that,
	\begin{align*}
		\norm{\E\left[\id_{\A}\otimes\ts_N\Big(P^k\left(X^N\right)\Big)\right]} &\leq \norm{P(x)}^k \left(1+\sum_{p= 1}^{nk/4} \frac{1}{N^{2p}} \sum_{g= 4p}^{16p-3} C^{g} \frac{(4p)! k^{g}}{p!g!}\right).
	\end{align*}
	Hence,	
	\begin{align*}
		\norm{\E\left[\id_{\A}\otimes\ts_N\Big(P^k\left(X^N\right)\Big)\right]} &\leq \norm{P(x)}^k \left(1+\sum_{p= 1}^{nk/4} \frac{1}{N^{2p}} \sum_{g= 4p}^{16p-3} C^{g} \frac{k^{g}}{(g-3p)!}\right) \\
		&\leq \norm{P(x)}^k \left(1+\sum_{p= 1}^{nk/4} \sum_{g= 0}^{12p-3} \frac{(Ck)^{g+4p}}{N^{2p}(g+p)!}\right) \\
		&\leq \norm{P(x)}^k \sum_{p=0}^{nk/4} \sum_{g= 0}^{12p} \frac{(Ck)^{g+4p}}{N^{2p}(g+p)!}.
	\end{align*}
	Let us now remark that for $p\leq nk/4$, the set $[0,12p]$ is included in the set $\{\lfloor \frac{4s p}{n k} \rfloor\ |\ s\in [0,3nk] \}$. Indeed, given $l\in [0,12p]$, we define $s=\lfloor \frac{nkl}{4p} \rfloor$, thus
	$$ \frac{nkl}{4p} \leq s < \frac{nkl}{4p}+1.$$
	Consequently,
	$$ l \leq \frac{4ps}{nk} < l+\frac{4p}{nk},$$
	and since we assumed that $\frac{4p}{nk}\leq 1$, this implies that $l=\lfloor \frac{4s p}{n k} \rfloor$, hence the conclusion. As a consequence,
	\begin{align*}
		\norm{\E\left[\id_{\A}\otimes\ts_N\Big(P^k\left(X^N\right)\Big)\right]} &\leq \norm{P(x)}^k \sum_{p= 0}^{nk/4} \sum_{s =0}^{3nk} \frac{(Ck)^{4p+\lfloor \frac{4s p}{n k} \rfloor}}{N^{2p} \left(\lfloor \frac{4s p}{n k}\rfloor+p \right)!}\\
		&\leq \norm{P(x)}^k \sum_{s =0}^{3nk} \sum_{p= 0}^{nk/4} \left(k \times C^{1+\frac{3p}{\lfloor \frac{4s p}{n k}\rfloor+p}} \left(\frac{k^3}{N^2}\right)^{\frac{p}{\lfloor \frac{4s p}{n k}\rfloor+p}}\right)^{\lfloor \frac{4s p}{n k}\rfloor+p} \frac{1}{(\lfloor \frac{4s p}{n k}\rfloor+p)!}.
	\end{align*}
	Note that $0\leq \lfloor \frac{4s p}{n k}\rfloor \leq 12p$, hence
	$$  C^{1+\frac{3p}{\lfloor \frac{4s p}{n k}\rfloor+p}} \leq \max\left( C^{4},C^{16/13}\right),$$
	$$ \left(\frac{k^3}{N^2}\right)^{\frac{p}{\lfloor \frac{4s p}{n k}\rfloor+p}} \leq \max\left( \left(\frac{k^3}{N^2}\right), \left(\frac{k^3}{N^2}\right)^{1/13} \right).$$
	Consequently, one can find a constant $C_{n,d}$ which only depends on $n$ and $d$ such that
	\begin{align*}
		\norm{\E\left[\id_{\A}\otimes\ts_N\Big(P^k\left(X^N\right)\Big)\right]} &\leq \norm{P(x)}^k \sum_{s =0}^{3nk} \sum_{p= 0}^{nk/4} \left(k \times C_{n,d} \max\left( \left(\frac{k^3}{N^2}\right), \left(\frac{k^3}{N^2}\right)^{1/13} \right) \right)^{\lfloor \frac{4s p}{n k}\rfloor+p} \frac{1}{(\lfloor \frac{4s p}{n k}\rfloor+p)!}.
	\end{align*}
	Finally, note that the sequence $p\mapsto p + \lfloor \frac{4s p}{n k}\rfloor$ is strictly increasing, and since for any strictly increasing sequence $(k_p)_{p\geq 0}$ one has that for any $X\geq 0$,
	$$ \sum_{p\geq 0} \frac{X^{k_p}}{k_p!} \leq e^X,$$
	we get that
	\begin{align*}
		\norm{\E\left[\id_{\A}\otimes\ts_N\Big(P^k\left(X^N\right)\Big)\right]} &\leq \norm{P(x)}^k \sum_{s =0}^{3nk} \exp\left( C_{n,d} k \max\left( \left(\frac{k^3}{N^2}\right)\right), \left(\frac{k^3}{N^2}\right)^{1/13} \right) \\
		&\leq (3nk+1) \norm{P(x)}^k \exp\left( C_{n,d} k \max\left( \left(\frac{k^3}{N^2}\right)\right), \left(\frac{k^3}{N^2}\right)^{1/13} \right).
	\end{align*}
	And thus
	\begin{align*}
		\E\left[ \norm{P(X^N)}_{L^{2k}} \right] &\leq \E\left[ \tau\otimes\ts_N\left(\left(P(X^N)^*P(X^N)\right)^{k}\right) \right]^{1/2k} \\
		&= \tau\left(\E\left[ \id_{\A}\otimes\ts_N\left(\left(P(X^N)^*P(X^N)\right)^{k}\right) \right]\right)^{1/2k} \\
		&\leq \norm{\E\left[ \id_{\A}\otimes\ts_N\left(\left(P(X^N)^*P(X^N)\right)^{k}\right) \right]}^{1/2k} \\
		&\leq (3nk+1)^{\frac{1}{2k}} \norm{P(x)} \exp\left( C_{2n,d} \max\left( \left(\frac{k^3}{N^2}\right)\right), \left(\frac{k^3}{N^2}\right)^{1/13} \right).
	\end{align*}

\subsection{Proof of Theorem \ref{detcase}}

The proof of Theorem \ref{detcase} will be nearly the same as the one of Theorem \ref{popopohdvc} with the one technical difference being that we need to prove an equivalent to Proposition \ref{lvleavalvlkd} which covers the case of deterministic matrices. We do so in Proposition \ref{lvleavalvlkd2}. The constant $C$ in Equation \eqref{doicdom} will also depend on $\sup_j \norm{Z_j^N}$. Besides, one cannot use Lemma \ref{xsdfgyujk} to bound 
$$\frac{\sup_{M \text{ monomial}}\norm{a_{M}}}{\norm{P(x,Z^N)}}.$$
Thus starting with Equation \eqref{dlmvsomv}, we replace the constant $C$ by $\frac{C'}{\norm{P(x,Z^N)}}$, where $C'$ is a constant which depends on $P$ as well as $\sup_j \norm{Z_j^N}$, hence the conclusion.

\begin{prop}
	
	\label{lvleavalvlkd2}
	
	Given the following objects,
	\begin{itemize}
		\item $X^{\mathfrak{n}N}=\left(X_1^{\mathfrak{n}N},\dots,X_d^{\mathfrak{n}N}\right)$ a $d$-tuple of GUE random matrices of size $\mathfrak{n}N$,
		\item $Z^N$ a $q$-tuple of deterministic matrices of size $N$.
		\item $P\in\A\langle X_1,\dots,X_d,Z_1,\dots,Z_{q}\rangle$ a polynomial of degree $n$,
	\end{itemize}
	With the notations of previous lemmas,
	\begin{align*}
		\E\Big[&\id_{\A}\otimes\ts_N\Big(P^k\left(X^{\mathfrak{n}N},I_{\mathfrak{n}}\otimes Z^N\right) \Big)\Big] = (\id_{\A}\otimes\tau)\left(P(x,Z^N)^k\right) \\
		&+ \sum_{p= 1}^{nk/4} \frac{1}{2^{p}\mathfrak{n}^{2p}} \int_{A_p} e^{-t_1-\cdots-t_{2p}} \sum_{l=1}^{c_{T_p}} \sum_{K\in \mathcal{K}_{4p+1}}\ \sum_{\substack{1\leq z_1,\dots,z_{m_K-1} \leq d \\ \forall E\in \pi_l, \forall n,m\in E, z_n=z_m}} \\
		&\quad\quad\quad\quad\quad\quad\quad\quad\quad\quad\quad \id_{\A}\otimes\ts_N\otimes \tau^{\otimes l_K} \left(w^{l,K,T_p}_z\Big( \left(\partial_{z_1}\circ\dots\circ \partial_{z_{m_K-1}}\left(P^k\right)\right) \Big) \right)dt_1\dots dt_{2p}, \nonumber
	\end{align*}
	where $c_{T_p}\leq (4p)!$, $\partial_{z_1}\circ\dots\circ \partial_{z_{m_K-1}}\left(P^k\right)$ is defined as in Proposition \ref{lvleavalvlkd} and
	$$w^{l,K,T_p}_z: \A\langle X_1,\dots,X_d,Z_1,\dots,Z_{q}\rangle^{\otimes m_K} \to \A\otimes\M_N(\C)\otimes \CC^*(x)^{\otimes l_K}$$
	is a linear map such that for all $P_s\in \A\langle X_1,\dots,X_d,Z_1,\dots,Z_{q}\rangle$,
	$$ \norm{w^{l,K,T_p}_z\left( \bigotimes_{ s\in [1,m_K]}\ P_{s}\right)} \leq \prod_{ s\in [1,m_K]}\ \norm{P_{s}(x,Z^N)}.$$
	
\end{prop}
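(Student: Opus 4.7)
The plan is to mirror the proof of Proposition \ref{lvleavalvlkd}, the only genuine modification being the presence of the deterministic variables $Z_1,\dots,Z_q$. Since $\partial_i Z_j = 0$ by Definition \ref{3applicationopv}, the $Z$-letters behave as constant coefficients under every noncommutative derivative and under each operator $L^{T_p}$. Once one views a monomial $M_1(Z) X_{i_1} \dots X_{i_k} M_{k+1}(Z)$ as a polynomial in $X_1,\dots,X_d$ with coefficients in $\A \otimes \M_N(\C)$ (the $M_j(Z)$'s being fixed elements of $\M_N(\C)$), the chain of identities from Lemma \ref{3apparition} through Lemma \ref{gfdcvbnjuytr} produces formally identical formulas.

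The one subtle point is the scaling. A direct application of Lemma \ref{3apparition} to a GUE of size $\mathfrak{n}N$ would place $1/(\mathfrak{n}N)^{2p}$ in the denominator. To recover the desired $1/\mathfrak{n}^{2p}$, I would first reinterpret $X^{\mathfrak{n}N}$ via the construction of Subsection \ref{sdkmvdoijiytd}, identifying it in distribution with the $d$-tuple of Definition \ref{eiucskc} at size $\mathfrak{n}$: an $\mathfrak{n} \times \mathfrak{n}$ matrix whose entries are scaled semicircular variables living in a larger algebra $\B_\mathfrak{n}$ that already contains $\M_N(\C) \supset Z^N$ as a free subalgebra (by Proposition \ref{dojvdsomv}). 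Applying the asymptotic expansion to this $\mathfrak{n} \times \mathfrak{n}$ GUE with coefficients in $\A \otimes \M_N(\C)$ yields an expansion whose error parameter is $\mathfrak{n}$, the matrices $Z^N$ and the scalar coefficients $a_M \in \A$ being fused into the coefficient algebra from the very start.

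Proceeding through the analogues of Lemmas \ref{poiuytrew}, \ref{sdtyuikmn} and \ref{nkytrdds} in this enlarged setting, the combinatorial identities on monomials extend verbatim, because the $Z$-letters are never differentiated and the reasoning of Proposition \ref{edgtn} is purely about products of semicircular variables. Following the derivation of Lemma \ref{gfdcvbnjuytr}, one obtains the analogue of Proposition \ref{lvleavalvlkd} with $\id_\A \otimes \ts_N \otimes \tau^{\otimes l_K}$ in place of $\id_\A \otimes \tau^{\otimes l_K}$, and with an auxiliary linear map $w^{l,K,T_p}_z$ taking values in $\A \otimes \M_N(\C) \otimes \CC^*(x)^{\otimes l_K}$. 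The leading term $(\id_\A \otimes \tau)(P(x,Z^N)^k)$ arises, as before, from the trivial configuration $p=0$, where the freeness statement of Proposition \ref{dojvdsomv} justifies writing the limit directly in terms of the evaluation of $P$ at $(x, Z^N)$.

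The norm estimate $\norm{w^{l,K,T_p}_z(\bigotimes_s P_s)} \leq \prod_s \norm{P_s(x,Z^N)}$ is then obtained exactly as for $v^{l,K,T_p}_z$: each $u_s^{l,K}$ embeds its argument in a single tensor slot and each $\Lambda^{l,T_p,K}_{z,s}$ has operator norm at most one, while the contractivity of the partial traces needed to combine them on the minimal tensor product is furnished by Proposition \ref{docmmfsokcm}. The main obstacle I expect is keeping the bookkeeping consistent once $\M_N(\C)$ is added to the coefficient algebra: the derivative $\partial_{z_1}\circ\cdots\circ\partial_{z_{m_K-1}}(P^k)$ now sits in $\A \otimes \M_N(\C) \otimes \C\langle X_1,\dots,X_d,Z_1,\dots,Z_q\rangle^{\otimes m_K}$, and one must verify that the tensor slots are correctly separated so that $\ts_N$ and $\tau^{\otimes l_K}$ act on the appropriate factors when the $u_s^{l,K}$'s and the operator-norm-one insertions $\Lambda^{l,T_p,K}_{z,s}$ are composed.
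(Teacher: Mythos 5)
Your proposal misses the central technical difficulty of this proof, and the step you outline to recover the $1/\mathfrak{n}^{2p}$ scaling does not actually work as stated.

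You suggest ``reinterpreting $X^{\mathfrak{n}N}$ via the construction of Subsection~\ref{sdkmvdoijiytd}, identifying it in distribution with the $d$-tuple of Definition~\ref{eiucskc} at size $\mathfrak{n}$.'' But the $d$-tuple of Definition~\ref{eiucskc} is a \emph{fixed} free semicircular system inside the operator algebra $\B_\mathfrak{n} = \M_\mathfrak{n}(\B)$, whereas $X^{\mathfrak{n}N}$ is a genuine random matrix; there is no equality in distribution for finite $N$. Proposition~\ref{dojvdsomv} only gives convergence in distribution as $N\to\infty$, which is not enough to derive an asymptotic expansion in $1/\mathfrak{n}^2$ with controlled remainders. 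Likewise, ``applying the asymptotic expansion to an $\mathfrak{n}\times\mathfrak{n}$ GUE with coefficients in $\A\otimes\M_N(\C)$'' would require Lemma~\ref{3apparition} to hold with a noncommutative matrix coefficient algebra interleaved inside the monomials, and with GUE-like random matrices whose entries are themselves random $N\times N$ blocks. No such generalization is stated, and proving it is precisely the content of the proposition you are trying to establish, so your plan is at best circular. In fact, applying Lemma~\ref{3apparition} honestly to the size-$\mathfrak{n}N$ GUE produces denominators $(\mathfrak{n}N)^{2p}$, and the nontrivial content of the proposition is that the coefficient genuinely grows like $N^{2p}$ so the $N$ dependence cancels.

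That cancellation is the heart of the paper's proof and is completely absent from your sketch. After reducing the trace to $\tau_N$ via freeness, the paper decomposes each $x_j^{s,N} = \sum_{1\leq i\leq N^2} E_i\otimes x_i^{j,s}$ using Definition~\ref{eiucskc}, which splits the trace into a matrix-trace factor $\ts_N\big(B_{\sigma_l(1)}^{i_{\sigma_l(1)}}\cdots B_{\sigma_l(4p+1)}^{i_{\sigma_l(4p+1)}}\big)$, where the $B$'s are products of matrix units $E_i$ and the $Z^N$'s, and a Fock-space factor. One then has to prove, by induction and using Lemma 5.1 of \cite{sept}, that the permuted matrix trace equals $N^{2p}\,\E\big[\ts_N\big(B_1^{i_1}M_1^l(U,U^*)\cdots B_{4p+1}^{i_{4p+1}}M_{4p+1}^l(U,U^*)\big)\big]$ for Haar unitaries $U$; this is where the $N^{2p}$ factor is extracted. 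Your claim that ``the combinatorial identities on monomials extend verbatim'' is exactly the claim that fails: permuting the $B$-factors does \emph{not} leave $\ts_N$ invariant, and one must introduce and control the Haar unitaries. Relatedly, your norm bound for $w^{l,K,T_p}_z$ is too quick: the paper must introduce the operators $L_z$, $R_z$ and the $\sharp$-product and verify $\norm{L_z}=1$ and $\norm{R_z\sharp T}\leq\norm{T}$, none of which you address.
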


\begin{proof}
	
	To begin with, if we write $P=\sum_M a_M\otimes M$, one has that
	$$ P^k = \sum_{M_1,\dots,M_k} a_{M_1}\dots a_{M_k} \otimes M_1\dots M_k.$$
	Thus thanks to Lemma \ref{3apparition}, one has that
	\begin{align*}
		&\E\left[\id_{\A}\otimes\ts_N\Big(P^k\left(X^{\mathfrak{n}N},I_{\mathfrak{n}}\otimes Z^N\right)\Big)\right] \\
		&= \sum_{p= 0}^{nk/4} \frac{1}{N^{2p}} \int_{A_p} e^{-t_1-\cdots-t_{2p}} \sum_{M_1,\dots,M_k} a_{M_1}\dots a_{M_k} \\
		&\quad\quad\quad\quad\quad\quad\quad\quad\quad\quad\quad\quad\quad\quad\quad\quad \tau_{\mathfrak{n}N}\Big( \left(L^{{T}_p} \dots L^{{T}_1}\right)(M_1\dots M_k) \left(x^{T_p},I_{\mathfrak{n}}\otimes Z^N\right) \Big)\  dt_1\dots dt_{2p}.
	\end{align*}
	However, as non-commutative random variables, $I_{\mathfrak{n}}\otimes Z^N$ and $Z^N$ have the same distribution. Thus since $I_{\mathfrak{n}}\otimes Z^N$ and $x^{T_p}$ are free, $\left(x^{T_p},I_{\mathfrak{n}}\otimes Z^N\right)$ and $\left(x^{T_p},Z^N\right)$ also have the same distribution. Thus
	\begin{align}
		\label{nknsc}
		&\tau_{\mathfrak{n}N}\Big( \left(L^{{T}_p} \dots L^{{T}_1}\right)(M_1\dots M_k) \left(x^{T_p},I_{\mathfrak{n}}\otimes Z^N\right) \Big) = \tau_N\Big( \left(L^{{T}_p} \dots L^{{T}_1}\right)(M_1\dots M_k) \left(x^{T_p},Z^N\right) \Big).
	\end{align}
	And thanks to Lemma \ref{poiuytrew}, with the same notation, one has that with $Q=M_1\dots M_k$,
	\begin{align}
		\label{dmvclsmcsaa}
		&\tau_N\Big( \left(L^{p}_{s_p} \dots L^{1}_{s_1}\right)(Q)\left(x^{T_p},Z^N\right) \Big) \\
		&= \frac{1}{2^p} \sum_{l=1}^{\widehat{c}_{s_1,\dots,s_p}} \sum_{\substack{1\leq z_1,\dots,z_{4p} \leq d \\ z_{\widehat{\pi}_l(1)} = z_{\widehat{\pi}_l(2)}, \dots, z_{\widehat{\pi}_l(4p-1)} = z_{\widehat{\pi}_l(4p)}}} \tau_N\Big( \ev_l\circ\widehat{\sigma}_l\circ m\circ\left(\partial_{z_1}\otimes \cdots \otimes \partial_{z_{4p}}\right)(Q)\left(x^{T_p},Z^N\right) \Big), \nonumber
	\end{align}
	Besides one can find a permutation $\sigma_l\in \mathbb{S}_{4p+1}$ and $d$-tuples of free semicircular variables $x^1,\dots,x^{4p+1}$ whose covariance depends only on $T_p$ such that for any $A_i\in \C\langle X_1,\dots,X_d,Z_1,\dots,Z_{q}\rangle$,
	$$ \tau_N\left(\left(\ev_l\circ\widehat{\sigma}_l\circ m\circ\left(A_1\otimes \cdots \otimes A_{4p+1}\right)\right)(x^{T_p},Z^N)\right) = \tau_N\left(A_{\sigma_l(1)}(x^1,Z^N) \cdots A_{\sigma_l(4p+1)}(x^{4p+1},Z^N)\right). $$
	But thanks to Proposition \ref{dojvdsomv}, one has that with $x^N$ as in Definition \ref{eiucskc},
	$$ \tau_N\left(\left(\ev_l\circ\widehat{\sigma}_l\circ m\circ\left(A_1\otimes \cdots \otimes A_{4p+1}\right)\right)(x^{T_p},Z^N)\right) = \tau_N\left(A_{\sigma_l(1)}(x^{N,1},Z^N) \cdots A_{\sigma_l(4p+1)}(x^{N,4p+1},Z^N)\right). $$
	Hence, assuming now that $(A_i)_{1\leq i\leq 4p+1}$ are monomials, one can write $x_j^{s,N} = \sum_{1\leq i\leq N^2} E_i\otimes x_i^{j,s}$, where $(E_i)_{1\leq i\leq N^2}$ is the union of the families 
	$$ \left(\frac{E_{u,u}}{\sqrt{N}}\right)_{1\leq u\leq N},\quad \left(\frac{E_{u,v}+E_{u,v}}{\sqrt{2N}}\right)_{1\leq u<v\leq N},\quad \left(\i \frac{E_{u,v}-E_{u,v}}{\sqrt{2N}}\right)_{1\leq u<v\leq N},$$
	where $E_{u,v}$ is the matrix of size $N$ whose only non zero coefficient is the one on line $u$ and row $v$. Besides $(x_i^{j,s})_{1\leq i\leq N^2}$ is a free semicircular system, and the correlation between $x_i^{j,s}$ and $x_i^{j',s'}$ is the same as the one between $x_j^s$ and $x_j^s$. Thus we have that
	\begin{align*}
		&\tau_N\left(\left(\ev_l\circ\widehat{\sigma}_l\circ m\circ\left(A_1\otimes \cdots \otimes A_{4p+1}\right)\right)(x^{T_p},Z^N)\right) \\
		&= \sum_{\substack{1\leq i_{a,s}\leq N^2 \\ 1\leq a\leq \deg A_s, 1\leq s\leq 4p+1 }} \ts_N\left(B_{\sigma_l(1)}^{i_{\sigma_l(1)}} \cdots B_{\sigma_l(4p+1)}^{i_{\sigma(4p+1)}}\right) \tau\left(A_{\sigma_l(1)}^{i_{\sigma_l(1)}}(x^{N,1}) \cdots A_{\sigma_l(4p+1)}^{i_{\sigma(4p+1)}}(x^{N,4p+1})\right),
	\end{align*}
	where $A_s^{i_s}(x^{N,s})$ is the monomial $A_s$ evaluated in a specific way, if the $a$-th variable of this monomial belongs to $\{Z_1,\dots,Z_{q}\}$, then we evaluate it in $1$, if on the contrary it belongs to $\{X_1,\dots,X_d\}$, then we evaluate it in $x_{i_{a,s}}^{j,s}$. On the contrary, $B_s^{i_s}$ is the monomial $A_s$ evaluated in the following way, if the $a$-th variable of this monomial belongs to $\{Z_1,\dots,Z_{q}\}$, then we evaluate it in $Z^N$, if on the contrary it belongs to $\{X_1,\dots,X_d\}$, then we evaluate it in $E_{i_{a,s}}$.
	
	Next, let us focus on the trace on the space of matrices. We are going to prove that for the permutations $\sigma_l$ that we consider, one can find $U_1,\dots,U_{2p}$ independent Haar unitary matrices of size $N$ and monomials $M_1^l,\dots,M_{4p+1}^l$ such that
	$$ \ts_N\left(B_{\sigma_l(1)}^{i_{\sigma_l(1)}} \dots B_{\sigma_l(4p+1)}^{i_{\sigma(4p+1)}}\right) = N^{2p}\ \E\left[\ts_N\left( B_1^{i_1} M_1^l(U,U^*) \dots B_{4p+1}^{i_{4p+1}} M_{4p+1}^l(U,U^*) \right)\right]. $$
	We will proceed by induction. The case where $p=1$ is simply a consequence of Lemma 5.1 of \cite{sept} combined with the definition of $L_1^1$ in Equation \eqref{xfgukokgc}. Let us now consider $p>1$, then given the definition by induction of $\widehat{\sigma}_l$ in the proof of Lemma \ref{poiuytrew}, by using once again Lemma 5.1 of \cite{sept}, one can write 
	$$ \ts_N\left(B_{\sigma_l(1)}^{i_{\sigma_l(1)}} \dots B_{\sigma_l(4p+1)}^{i_{\sigma(4p+1)}}\right) = N^2\ \E\left[\tr_N\left( C_1 U C_2 V C_3 U^* C_4 V^* \right)\right], $$
	and one can find $\nu_l\in \S_{4p-3}$ such that by traciality, one can rewrite $C_1 U C_2 V C_3 U^* C_4 V^*$ as a product $\widetilde{B}_{\nu_l(1)}\dots \widetilde{B}_{\nu_l(4p-3)}$. Hence the conclusion by induction.

	Besides, thanks to Lemma \ref{sdtyuikmn}, one also have that
	\begin{align*}
		&\tau\left(A_{\sigma_l(1)}^{i_{\sigma_l(1)}}(x^{N,1}) \cdots A_{\sigma_l(4p+1)}^{i_{\sigma_l(4p+1)}}(x^{N,4p+1})\right) \\
		&= \sum_{K\in \mathcal{K}_{4p+1}}\ \sum_{z\in [1,dN^2]^{\widetilde{K}}}\ H^{K}_z\left( \bigotimes_{s\in [1,4p+1]}\ \left(\bigotimes_{t\in C_s^{**}(K)} \partial_{z_{s,t}}\right)\left(A_{\sigma_l(s)}^{i_{\sigma_l(s)}}\right) \right).
	\end{align*}
	And thanks to Lemma \ref{nkytrdds}, one has that
	\begin{align*}
		&\tau\left(A_{\sigma_l(1)}^{i_{\sigma_l(1)}}(x^{N,1}) \cdots A_{\sigma_l(4p+1)}^{i_{\sigma_l(4p+1)}}(x^{N,4p+1})\right) \\
		&= \sum_{K\in \mathcal{K}_{4p+1}}\ \sum_{z\in [1,dN^2]^{\widetilde{K}}}\ \tau^{\otimes l_K} \left(\rule{0cm}{1cm}\right. u^{\sigma_l,K,T_p}_z \left(\rule{0cm}{1cm}\right. \bigotimes_{a\in [1,4p+1]} \ \left(\rule{0cm}{1cm}\right.\bigotimes_{b\in \sigma_l\left(C_{\sigma_l^{-1}(a)}^{**}(K)\right)} \partial_{z_{\sigma_l^{-1}(a),\sigma_l^{-1}(b)}}\left.\rule{0cm}{1cm}\right) \left(A_{a}^{i_a}\right) \left.\rule{0cm}{1cm}\right) \left.\rule{0cm}{1cm}\right).
	\end{align*}
	Consequently, we get that
	\begin{align*}
		&\tau_N\left(\left(\ev_l\circ\widehat{\sigma}_l\circ m\circ\left(A_1\otimes \cdots \otimes A_{4p+1}\right)\right)(x^{T_p},Z^N)\right) \\
		&= N^{2p} \sum_{K\in \mathcal{K}_{4p+1}} \sum_{z\in [1,d]^{\widetilde{K}}} \ts_N\otimes\tau^{\otimes l_K} \left(\rule{0cm}{1cm}\right. w^{\sigma_l,K,T_p}_z \left(\rule{0cm}{1cm}\right. \bigotimes_{a\in [1,4p+1]} \left(\rule{0cm}{1cm}\right.\bigotimes_{b\in \sigma_l\left(C_{\sigma_l^{-1}(a)}^{**}(K)\right)} \partial_{z_{\sigma_l^{-1}(a),\sigma_l^{-1}(b)}}\left.\rule{0cm}{1cm}\right) \left(A_a\right) \left.\rule{0cm}{1cm}\right) \left.\rule{0cm}{1cm}\right),
	\end{align*}
	where $w^{\sigma_l,K,T_p}_z: \A_{p,q}^{\otimes m_K} \mapsto \M_N(\C)\otimes\CC^*(x)^{\otimes l_K}$ is defined by the following formula,
	\begin{align*}
		&w^{\sigma_l,K,T_p}_z: \left(\bigotimes_{a\in [1,4p+1]}\ \left(\bigotimes_{b\in \sigma_l\left(C_{\sigma_l^{-1}(a)}^{*}(K)\right)} B_{a,b}\right)\right) \\
		&\mapsto \E\left[\rule{0cm}{1cm}\right. \prod_{a\in [1,4p+1]} \left(\prod_{b\in \sigma_l\left(C_{\sigma_l^{-1}(a)}^{**}(K)\right),\ a<b} (\id_{\M_N(\C)}\otimes u_{a,b}^{\sigma_l,K}) \left(B_{a,b}(x^N,Z^N) L_{z_{\sigma_l^{-1}(a),\sigma_l^{-1}(b)}} \Delta_{\sigma_l^{-1}(a),\sigma_l^{-1}(b)} \right)\right) \\
		&\quad\quad\quad\quad\quad\ \quad\times \left(\prod_{b\in \sigma_l\left(C_{\sigma_l^{-1}(a)}^{*}(K)\right),\ a=b} (\id_{\M_N(\C)}\otimes u_{a,b}^{\sigma_l,K}) \left(B_{a,b}(x^N,Z^N) \right)\right) \\
		&\quad\quad\quad\quad\quad\ \quad\times \left(\prod_{b\in \sigma_l\left(C_{\sigma_l^{-1}(a)}^{**}(K)\right),\ b>a} (\id_{\M_N(\C)}\otimes u_{a,b}^{\sigma_l,K})\left( R_{z_{\sigma_l^{-1}(a),\sigma_l^{-1}(b)}} \sharp B_{a,b}(x^N,Z^N)\right) \right) \\
		&\quad\quad\quad\quad\quad\ \quad\times \left(\prod_{b\in \sigma_l\left(C_{\sigma_l^{-1}(a)}^{*}(K)\right),\ b= \sigma_l\left((\sigma_l^{-1}(a))^{-}\right)} (\id_{\M_N(\C)}\otimes u_{a,b}^{\sigma_l,K}) \left(B_{a,b}(x^N,Z^N) \Delta_{\sigma_l^{-1}(a),\sigma_l^{-1}(b)} \right)\right) \\
		&\quad\quad\quad\quad\quad\ \quad\times M_a^l(U,U^*)\otimes \id_{\CC^*(x)}^{\otimes l_K} \left.\rule{0cm}{1cm}\right],
	\end{align*}
	with 
	$$ L_{z} = \sum_{1\leq i\leq N^2} E_i\otimes \left(l_i^{z}\right)^*,\quad\quad R_{z} = \sum_{1\leq i\leq N^2} E_i\otimes r_i^{z}, $$
	$$ \text{for all } S=\sum_i E_i\otimes A_i, T=\sum_j E_j\otimes B_j,\quad  S\sharp T = \sum_{i,j} E_jE_i\otimes A_iB_j. $$
	Note in particular that
	$$ \norm{L_{z}} = \norm{L_zL_z^*}^{1/2} = \norm{\sum_{1\leq i,j\leq N^2} E_iE_j^*\otimes \left(l_i^{z}\right)^*  l_i^{z} }^{1/2} = \norm{\sum_{1\leq i\leq N^2} E_iE_i^* }^{1/2} = 1, $$
	and
	\begin{align*}
		\norm{R_z\sharp T} &= \norm{(R_z\sharp T)^*R_z\sharp T}^{1/2} \\
		&= \norm{\sum_{1\leq i,j,k,l\leq N^2} E_i^*E_l^* E_kE_j \otimes A_l^* r_i^*r_j B_k}^{1/2} \\
		&= \norm{\sum_{1\leq i\leq N^2} E_i^*\otimes 1 T^*T E_i\otimes 1}^{1/2} \\
		&= \norm{ \ts_N\otimes\id_{\CC^*(x)}( T^*T)}^{1/2} \leq \norm{T}.
	\end{align*}
	Hence one has that,
	$$ \norm{w^{\sigma_l,K,T_p}_z: \left(\bigotimes_{a\in [1,4p+1]}\ \left(\bigotimes_{b\in \sigma_l\left(C_{\sigma_l^{-1}(a)}^{*}(K)\right)} B_{a,b}\right)\right)} \leq \prod_{a\in [1,4p+1]}\ \prod_{b\in \sigma_l\left(C_{\sigma_l^{-1}(a)}^{*}(K)\right)} \norm{B_{a,b}(x,Z^N)}. $$
	Thus by using Equation \eqref{nknsc} and \eqref{dmvclsmcsaa}, we get that 
	\begin{align*}
		\label{nknsc}
		&\tau\Big( \left(L^{{T}_p} \dots L^{{T}_1}\right)(M_1\dots M_k) \left(x^{T_p},I_n\otimes Z^N\right) \Big) \\
		&= \frac{N^{2p}}{2^p} \sum_{l=1}^{\widehat{c}_{s_1,\dots,s_p}} \sum_{\substack{1\leq z_1,\dots,z_{4p} \leq d \\ z_{\widehat{\pi}_l(1)} = z_{\widehat{\pi}_l(2)}, \dots, z_{\widehat{\pi}_l(4p-1)} = z_{\widehat{\pi}_l(4p)}}} \sum_{K\in \mathcal{K}_{4p+1}}\ \sum_{z\in [1,d]^{\widetilde{K}}}\\
		&\quad\quad \ts_N\otimes\tau^{\otimes l_K} \left(\rule{0cm}{1cm}\right. w^{\sigma_l,K,T_p}_z \left(\rule{0cm}{1cm}\right. \bigotimes_{a\in [1,4p+1]} \ \left(\rule{0cm}{1cm}\right.\bigotimes_{b\in \sigma_l\left(C_{\sigma_l^{-1}(a)}^{**}(K)\right)} \partial_{z_{\sigma_l^{-1}(a),\sigma_l^{-1}(b)}}\left.\rule{0cm}{1cm}\right) \left(\partial_{z_1}\otimes \cdots \otimes \partial_{z_{4p}}\right)(Q) \left.\rule{0cm}{1cm}\right) \left.\rule{0cm}{1cm}\right).
	\end{align*}
	From there on, the proof follows as in the one of Proposition \ref{lvleavalvlkd}, starting with Equation \eqref{sdicoscs}.
	
\end{proof}

\subsection{Proof of Theorem \ref{1strongconv}}

	Let us start with the case where $d'=0$, i.e. when we only have GUE random matrices. Thanks to Proposition 4.6 of \cite{un}, in order to prove that almost surely the family $(X^N\otimes I_{M_N}, I_N\otimes Y^{M_N})$ converges strongly in distribution towards $(x\otimes 1, 1\otimes y)$, it is sufficient to show that for any polynomial $P$, 
	$$ \lim\limits_{N\to\infty} \E\left[\norm{P(X^N\otimes I_{M_N}, I_N\otimes Y^{M_N})}\right] = \norm{P(x\otimes 1, 1\otimes y)}.$$
	Thanks to Theorem 5.4.5 of \cite{alice}, if $h$ is a continuous function taking positive values on the interval $\left(\norm{PP^*(x\otimes 1, 1\otimes y)}-\varepsilon, \infty \right)$ and taking value $0$ elsewhere, then 
	$$\frac{1}{MN}\tr_{MN}(h(PP^*(X^N\otimes I_{M_N}, I_N\otimes Y^{M_N})))$$
	converges almost surely towards $\tau_{\A}\otimes_{\min}\tau_{\B} (h(PP^*(x\otimes 1, 1\otimes y)))$ which is positive since $\tau_{\A}\otimes_{\min}\tau_{\B}$ is faithful thanks to Lemma 4.1.8 from \cite{ozabr} and $h$ is non-negative and in particular positive on a subset of the spectrum of $PP^*(x\otimes 1, 1\otimes y)$. Thus for any $\varepsilon>0$, for $N$ large enough,
	$$ \norm{PP^*(X^N\otimes I_{M_N}, I_N\otimes Y^{M_N})} \geq \norm{PP^*(x\otimes 1, 1\otimes y)} - \varepsilon .$$
	As a consequence,  almost surely,
	\begin{equation*}
		\liminf_{N\to \infty} \norm{P(X^N\otimes I_{M_N}, I_N\otimes Y^{M_N})} \geq \norm{P(x\otimes 1, 1\otimes y)} .
	\end{equation*}
	And thanks to Fatou's Lemma, we deduce
	$$ \liminf_{N\to \infty}\ \E\left[ \norm{P(X^N\otimes I_{M_N}, I_N\otimes Y^{M_N})} \right] \geq \norm{P(x\otimes 1, 1\otimes y)}. $$
	As for the other direction, we have thanks to Theorem \ref{popopohdvc} that for any $k\geq 0$, with $P$ a polynomial of degree $n$,
	\begin{align*}
		\E\left[\norm{P\left(X^N\otimes I_{M_N}, I_N\otimes Y^{M_N}\right)}\right] &\leq \E\left[\tr_{M_N}\otimes\tr_N\left(\left(P^*P\left(X^N\otimes I_{M_N}, I_N\otimes Y^{M_N}\right)^k\right)\right)^{\frac{1}{2k}}\right] \\
		&\leq (NM_N)^{1/2k} \E\left[\norm{\left(P^*P\left(X^N\otimes I_{M_N}, I_N\otimes Y^{M_N}\right)^k\right)}_{L^{2k}}\right] \\
		&\leq (NM_N)^{1/2k} (3nk+1)^{\frac{1}{2k}} \norm{P(x\otimes I_{M_N},1\otimes Y_M)} \\
		&\quad\quad\quad\quad\quad\quad\quad\quad\quad\quad\times \exp\left( C_{2n,d} \max\left( \left(\frac{k^3}{N^2}\right)\right), \left(\frac{k^3}{N^2}\right)^{1/13} \right).
	\end{align*}
	Hence since $\ln(M_N) = o(N^{2/3})$, one can pick $k_N$ such that 
	$$ \limsup_{N\to \infty}\ \E\left[ \norm{P(X^N\otimes I_{M_N}, I_N\otimes Y^{M_N})} \right] \leq \limsup_{N\to \infty} \norm{P(x\otimes I_{M_N},1\otimes Y_M)}. $$
	However, thanks to Lemma 4.3 of \cite{un} and Corollary 17.10 from \cite{exact}, 
	$$ \limsup_{N\to \infty} \norm{P(x\otimes I_{M_N},1\otimes Y_M)} = \norm{P(x\otimes 1, 1\otimes y)},$$
	hence the conclusion. As for the case of Haar unitary matrices, one can deduce the proof from the case of GUE matrices by following step by step the strategy of Collins and Male detailed in Section 3 of \cite{male}.

\subsection{Proof of Theorem \ref{3lessopti}}

	To begin with, let us note that thanks to Taylor's theorem with integral remainder, one has that for all $x\in \R$,
	\begin{align*}
		e^{\i y x} &= \sum_{k=0}^{n-1} \frac{(\i y x)^k}{k!} + \frac{(\i x y)^{n}}{(n-1)!} \int_0^1 e^{\i y t x} (1-t)^{n-1} dt
	\end{align*}
	Consequently, for any polynomial $P$,
	\begin{align*}
		\norm{e^{\i y P(X^N)} - \sum_{k=0}^{n-1} \frac{(\i y P(X^N))^k}{k!}} \leq \frac{\left(\norm{P(X^N)} |y|\right)^{n}}{n!}
	\end{align*}
	Thus, thanks to well-known concentration inequalities, such as Proposition \ref{3bornenorme}, there exist constants $c,D$ such that for $n\leq cN$, 
	\begin{align*}
		\norm{\E\left[\id_{\A}\otimes\ts_N\left(e^{\i y P(X^N)}\right)\right] - \sum_{k=0}^{n-1} \frac{(\i y)^k}{k!} \E\left[\id_{\A}\otimes\ts_N\left(P^k(X^N)\right)\right] } \leq \frac{\left(D|y|\right)^{n}}{n!}
	\end{align*}
	We set $\alpha_0(y)= (\id_{\A}\otimes\tau)\left(P(x)^k\right)$ and
	\begin{align*}
		&\alpha_p(y) = \frac{1}{2^{p}} \sum_{k=0}^{\infty} \frac{(\i y)^k}{k!} \int_{A_p} e^{-t_1-\cdots-t_{2p}} \sum_{l=1}^{c_{T_p}} \sum_{K\in \mathcal{K}_{4p+1}}\ \sum_{\substack{1\leq z_1,\dots,z_{m_K-1} \leq d \\ \forall E\in \pi_l, \forall n,m\in E, z_n=z_m}} \\
		&\quad\quad\quad\quad\quad\quad\quad\quad\quad\quad\quad\quad\quad\quad\quad\quad \id_{\A}\otimes\tau^{\otimes l_K} \left(v^{l,K,T_p}_z\Big( \left(\partial_{z_1}\circ\dots\circ \partial_{z_{m_K-1}}\left(P^k\right)\right) \Big) \right)dt_1\dots dt_{2p},
	\end{align*}
	Note that thanks to Equation \eqref{xdrfdyuijhjghj}, as well as the fact that thanks to Theorem \ref{ssxdcffvg}, $m_K-1\in [4p,16p]$, there exist constant $\widehat{C},C$ such that for $16p\leq n$,
	\begin{align*}
		&\Bigg| \alpha_p(y) - \frac{1}{2^{p}} \sum_{k< n} \frac{(\i y)^k}{k!} \int_{A_p} e^{-t_1-\cdots-t_{2p}} \sum_{l=1}^{c_{T_p}} \sum_{K\in \mathcal{K}_{4p+1}}\ \sum_{\substack{1\leq z_1,\dots,z_{m_K-1} \leq d \\ \forall E\in \pi_l, \forall n,m\in E, z_n=z_m}} \\ 
		&\quad\quad\quad\quad\quad\quad\quad\quad\quad\quad\quad\quad\quad\quad\quad\quad \id_{\A}\otimes\tau^{\otimes l_K} \left(v^{l,K,T_p}_z\Big( \left(\partial_{z_1}\circ\dots\circ \partial_{z_{m_K-1}}\left(P^k\right)\right) \Big) \right)dt_1\dots dt_{2p} \Bigg| \\
		&\leq \frac{\widehat{C}^p}{p!} \sum_{k\geq n} \frac{|y|^k \norm{P(x)}^k k^{16p}}{k!} \\
		&\leq \frac{C^p}{p!} \sum_{k\geq n} \frac{|y|^k \norm{P(x)}^k }{(k-16p)!} \\
		&\leq \frac{C^p}{p!} \frac{|y|^n \norm{P(x)}^n }{(n-16p)!} e^{|y|\norm{P(x)}}.
	\end{align*}	
	Thus thanks to Proposition \ref{lvleavalvlkd}, one can find a constant $C$ such that if $L< n/16$, and $n^8<N$,
	\begin{align*}
		&\norm{\E\left[\id_{\A}\otimes\ts_N\left(e^{\i y P(X^N)}\right)\right] - \sum_{p= 0}^L \frac{\alpha_p(y)}{N^{2p}} } \\
		&\leq \frac{\left(D|y|\right)^{n}}{n!} + e^C \frac{(|y|\norm{P(x)})^n }{(n-16L)!} e^{|y|\norm{P(x)}} +\sum_{p>L} \sum_{k=0}^{n-1} \frac{1}{N^{2p}p!} C^p \frac{|y|^k}{k!}\times k^{16p} \norm{P(x)}^k \\
		&\leq \frac{\left(D|y|\right)^{n}}{n!} + e^C \frac{(|y|\norm{P(x)})^n }{(n-16L)!} e^{|y|\norm{P(x)}} +\sum_{p>L} \frac{1}{N^{2p}p!} C^p n^{16p} \sum_{k=0}^{n-1} \frac{(|y|\norm{P(x)})^k}{k!} \\
		&\leq \frac{\left(D|y|\right)^{n}}{n!} + \left( \frac{(|y|\norm{P(x)})^n }{(n-16L)!}  + \frac{1}{L!} \left(\frac{n^{16}C}{N^2}\right)^L \right) e^{|y|\norm{P(x)} +C} \\
		&\leq \frac{\left(D|y|\right)^{n}}{n!} + \left( \frac{(|y|\norm{P(x)})^n }{(n-16L)!}  + \frac{C^L}{L!} \right) e^{|y|\norm{P(x)} +C} \\
	\end{align*}
	Thus by picking $n=\lfloor N^{1/8} \rfloor$, $L=\lfloor \alpha n/16\rfloor$, then for any $|y|\leq N^{1/8-\varepsilon}$, one can pick $\alpha$ sufficiently small such that, for any $k\in\N$,
	\begin{align*}
		\norm{\E\left[\id_{\A}\otimes\ts_N\left(e^{\i y P(X^N)}\right)\right] - \sum_{p= 0}^L \frac{\alpha_p(y)}{N^{2p}} } = \mathcal{O}\left(\frac{1}{N^{2(k+1)}}\right).
	\end{align*}
	Now let us remark that
	$$ \int_0^1 \alpha^n(1-\alpha)^m d\alpha = \frac{n!m!}{(n+m+1)!}.$$
	Consequently, with $k_0=0$,
	\begin{align*}
		&\frac{(k_1-1)!(k_2-k_1-1)!\dots (k_{x} - k_{x-1}-1)!(k - k_x)! }{k!} \\
		&= \prod_{i=1}^x \frac{(k_i-k_{i-1}-1)! (k-k_i)!}{(k-k_{i-1})!} \\
		&= \prod_{i=1}^x \int_0^1 (1-\alpha)^{k_i-k_{i-1}-1} \alpha^{k-k_i} d\alpha \\
		&= \int_{[0,1]^x} \prod_{i=1}^x (1-\alpha_i)^{k_i-k_{i-1}-1}\alpha_i^{k-k_i} d\alpha_1\dots d\alpha_x \\
		&= \int_{[0,1]^x} \left(\prod_{i=1}^x\alpha_i^{x-i}\right) \prod_{i=1}^x \left((1-\alpha_i)\prod_{j=1}^{i-1}\alpha_j\right)^{k_i-k_{i-1}-1} (\alpha_1\dots\alpha_x)^{k-k_x} d\alpha_1\dots d\alpha_x.
	\end{align*}
	Hence one has that
	\begin{align*}
		&\sum_{k=0}^{\infty} \frac{(\i y)^k}{k!} \sum_{1\leq k_1<\dots< k_x\leq k} \id_{\A}\otimes\tau^{\otimes l_K} \Biggl(v^{l,K,T_p}_z\Big( P^{k_1-1} \left(\partial_{z_1}\circ\dots\circ \partial_{z_{r_1}}P \right) P^{k_2-k_1-1}\dots\nonumber \\ 
		&\quad\quad\quad\quad\quad\quad\quad\quad\quad\quad\quad\quad\quad\quad\quad\quad\quad\quad\quad \dots P^{k_{x} - k_{x-1}-1} \left(\partial_{z_{r_1+\dots+r_{x-1}}}\circ \dots\circ\partial_{z_{m_K-1}}P \right) P^{k - k_{x}} \Big) \Biggl) \\
		&= \int_{[0,1]^x} (\i y)^x \prod_{i=1}^x\alpha_i^{x-i} \sum_{1\leq k_1<\dots< k_x <\infty} \id_{\A}\otimes\tau^{\otimes l_K} \Biggl(v^{l,K,T_p}_z\Bigg( \frac{(\i (1-\alpha_1) yP)^{k_1-1}}{(k_1-1)!} \\
		&\quad \left(\partial_{z_1}\circ\dots\circ \partial_{z_{r_1}}P \right) \frac{(\i (1-\alpha_2)\alpha_1yP)^{k_2-k_1-1}}{(k_2-k_1-1)!}\dots \frac{(\i (1-\alpha_x)\alpha_1\dots\alpha_{x-1} yP)^{k_{x} - k_{x-1}-1}}{(k_{x} - k_{x-1}-1)!} \\
		&\quad \left(\partial_{z_{r_1+\dots+r_{x-1}}}\circ \dots\circ\partial_{z_{m_K-1}}P \right) \frac{(\i \alpha_1\dots\alpha_x yP)^{k - k_{x}}}{(k - k_{x})!} \Bigg) \Biggl) d\alpha_1\dots d\alpha_x \\
		&= \int_{[0,1]^x} (\i y)^x \prod_{i=0}^{x-1}\alpha_{x-i}^{i} \id_{\A}\otimes\tau^{\otimes l_K} \Biggl(v^{l,K,T_p}_z\Bigg( e^{\i (1-\alpha_1) yP} \left(\partial_{z_1}\circ\dots\circ \partial_{z_{r_1}}P \right) e^{\i (1-\alpha_2)\alpha_1yP}\dots \\	
		&\quad \dots e^{\i (1-\alpha_x)\alpha_1\dots\alpha_{x-1} yP} \left(\partial_{z_{r_1+\dots+r_{x-1}}}\circ \dots\circ\partial_{z_{m_K-1}}P \right) e^{\i \alpha_1\dots\alpha_x yP} \Bigg) \Biggl) d\alpha_1\dots d\alpha_x.
	\end{align*}
	Hence thanks to Theorem \ref{ssxdcffvg}, there exists a constant $C$ such that for all $p\in\N$,
	\begin{align*}
		&\norm{\alpha_p^P(y)} \leq \frac{C^p}{p!} (4p)!\sum_{l=4p}^{16p-3}\sum_{x=1}^{l} \sum_{r_1+\dots+r_x = l} \frac{|y|^x}{x!}.
	\end{align*}
	Besides, the cardinal of the set $\{r_1,\dots,r_x \geq 1\ |\ r_1+\dots+r_x = l\}$ is bounded by $l^x/x!$, hence 
	\begin{align*}
		&\norm{\alpha_p^P(y)} \leq \frac{C^p}{p!} (4p)!\times 12 p e^{16 p} \max_{x\in [1,16p-3]} \frac{|y|^x}{x!}.
	\end{align*}
	Thus one can find a constant $C$ such that
	\begin{equation}
		\label{sdcosdc}
		\norm{\alpha_p^P(y)} \leq C^p (p+|y|)^{3p} \times (1+|y|)^{13p-3}.
	\end{equation}
	Consequently, since we have $n=\lfloor N^{1/8} \rfloor$, $L=\lfloor \alpha n/16\rfloor$, then for any $|y|\leq N^{1/8-\varepsilon}$, there is a constant $D$ such that
	$$  \sum_{p= g}^L \frac{\norm{\alpha_p^P(y)}}{N^{2p}} \leq \sum_{p=g}^{L} C^p\frac{L^{3p} (1+|y|)^{13p}}{N^{2p}} \leq D \left(\frac{L^3 (1+|y|)^{13p}}{N^2}\right)^g$$
	Thus, for any $\varepsilon>0$, $|y|\leq N^{1/8-\varepsilon}$, for all $k\in\N$, one can find a given $g\in\N$ such that
	\begin{align*}
		\norm{\E\left[\id_{\A}\otimes\ts_N\left(e^{\i y P(X^N)}\right)\right] - \sum_{p= 0}^g \frac{\alpha_p^P(y)}{N^{2p}} } = \mathcal{O}\left(\frac{1}{N^{2(k+1)}}\right).
	\end{align*}
	Finally, Equation \eqref{sdcosdc} implies that there exists a constant $C_p$ such that
	$$ \norm{\alpha_p^P(y)} \leq C_p(1+|y|)^{16p-3}. $$
	Let now $f$ be a function $16(k+1)$ times differentiable, and $h\in\CC^{\infty}(\R)$ with a compact support bounded by $K$, then for all $\varepsilon>0$,
	\begin{align*}
		\int_{|y|\leq N^{1/8-\varepsilon}} \left|\widehat{fh}(y)\right| \norm{\alpha_p^P(y)} dy &\leq 2^{16(k+1)} C_p \int_{\R} \frac{\left|\widehat{fh}(y)\right|+\left|\widehat{(fh)^{(16(k+1))}}(y)\right|}{(1+|y|)^2} dy\ (1+N^{1/8})^{16 \max(p-k-1,0)} \\
		&\leq 2^{16(k+1)}  C_p K  \int_{\R} \frac{1}{(1+|y|)^2} dy\ (1+N^{1/8})^{16 \max(p-k-1,0)} \\
		&\quad\quad\quad\quad\quad\quad\quad\quad\quad\quad\quad\quad\quad\quad\quad\times \sup_{t\in\R} \left(|(fh)(t)|+|(fh)^{(16(k+1))}(t)|\right).
	\end{align*}
	Hence for all $p\geq k+1$, 
	$$ \frac{1}{N^{2p}}\int_{|y|\leq N^{1/8-\varepsilon}} \left|\widehat{fh}(y)\right| \norm{\alpha_p^P(y)} dy = \mathcal{O}\left(\frac{1}{N^{2(k+1)}}\right).$$
	Besides, for all $p\leq k$, with $\varepsilon$ sufficiently small,
	\begin{align*}
		\int_{|y|> N^{1/8-\varepsilon}} \left|\widehat{fh}(y)\right| \norm{\alpha_p^P(y)} dy &\leq C_p \int_{|y|> N^{1/8-\varepsilon}} \frac{\left|\widehat{fh}(y)\right| (1+|y|)^{16(k+1)}}{(1+|y|)^2} dy \times (1+N^{1/8-\varepsilon})^{16(p-k-1)-1} \\
		&\leq 2^{16(k+1)}  C_p K \int_{\R} \frac{1}{(1+|y|)^2} dy\\
		&\quad\quad\quad\quad\quad\quad\quad\quad\times \sup_{t\in\R} \left(|(fh)(t)|+|(fh)^{(16(k+1))}(t)|\right) \times N^{2(p-k-1)}.
	\end{align*}
	Hence for all $p\leq k$, 
	$$ \frac{1}{N^{2p}}\int_{|y|> N^{1/8-\varepsilon}} \left|\widehat{fh}(y)\right| \norm{\alpha_p^P(y)} dy = \mathcal{O}\left(\frac{1}{N^{2(k+1)}}\right).$$
	Consequently, with 
	$$ \alpha^f_p(P) = \int_{\R} \widehat{f}(y) \alpha_p^P(y)\ dy, $$
	one has that
	\begin{align*}
		\norm{\E\left[\id_{\A}\otimes\ts_N\left((fh)(P(X^N)) \right)\right] - \sum_{p= 0}^k \frac{\alpha_p^{fh}(P)}{N^{2p}} } = \mathcal{O}\left(\frac{1}{N^{2(k+1)}}\right).
	\end{align*}
	Let us now remark that if $P$ is of degree $s$, then 
	$$ \norm{P(X^N)} \leq \sup_{M} \norm{a_M} \times \sup_{M\text{ monomial, }\deg M\leq s} M(\norm{X_1^N},\dots, \norm{X_d^N}). $$
	Thus thanks to classical concentration inequalities, such as Proposition \ref{3bornenorme}, there exist constants $D,u$ such that
	$$ \P\left( \norm{P(X^N)} \geq K+ \varepsilon \right) \leq e^{- u\varepsilon^{1/s} N}. $$
	Hence let $h\in\CC^{\infty}(\R)$ be a compact support function which takes the value $1$ on $[-K-1,K+1]$, since we assumed that $f$ was bounded, for some constant $u$,
	$$ \E\left[\id_{\A}\otimes\ts_N\left((fh)(P(X^N)) \right)\right] - \E\left[\id_{\A}\otimes\ts_N\left(f(P(X^N)) \right)\right] = \mathcal{O}(e^{-u N}). $$
	Thus we get that for any function $h\in\CC^{\infty}(\R)$ with a compact support and which takes the value $1$ on $[-K-1,K+1]$,
	$$ \E\left[\id_{\A}\otimes\ts_N\left(f(P(X^N)) \right)\right] = \sum_{p= 0}^k \frac{\alpha_p^{fh}(P)}{N^{2p}} +	 \mathcal{O}\left(\frac{1}{N^{2(k+1)}}\right).$$
	in particular, the coefficients $\alpha_p^{fh}(P)$ do not depend on $h$, hence one simply set $\alpha_p^{f}(P) := \alpha_p^{fh}(P)$ for some function $h$ which satisfies the assumptions above. Let $(\beta_p)_{p\geq 0}$ be coefficients such that
	$$ \E\left[\id_{\A}\otimes\ts_N\left(f(P(X^N)) \right)\right] = \sum_{p= 0}^k \frac{\beta_p}{N^{2p}} +	 \mathcal{O}\left(\frac{1}{N^{2(k+1)}}\right).$$
	Then let $p' = \min \{p\geq 0| \alpha_p^{f}(P) \neq \beta_p\}$, necessarily by definition if $p'\leq k$,
	$$ \beta_{p'} - \alpha_{p'}^{f}(P) = \mathcal{O}(N^{-2}).$$
	Hence the coefficients $\alpha_p^{f}(P)$ are unique.

\subsection{Proof of Theorem \ref{masterineq}}

	Let $Q=P_1\otimes\cdots\otimes P_n\in\A\langle X_1,\dots,X_d\rangle^{\otimes n}$, let us study the quantity $(\tau_{\A}\otimes\tau)\left( \big(m_{\sigma}(Q(x))\big)^k \right)$. We view $\sigma$ as a permutation of $[1,kn]$ by defining $\sigma(ln+i) = ln+\sigma(i)$, then thanks to Lemma \ref{sdtyuikmn},
	\begin{align*}
		&(\tau_{\A}\otimes\tau)\left( m_{\sigma}\left(Q(x)\right)^k \right) \\
		&= \sum_{K\in \mathcal{K}_{nk}}\ \sum_{z\in [1,d]^{\widetilde{K}}} \tau_{\A}\left( m\otimes H^K_z\left( \bigotimes_{\forall i\in [1,kn]}\ \left(\bigotimes_{j\in C_i^{**}(K)} \partial_{i_{\{i,j\}}}\right) \right)\left(\sigma\left(Q(x)^{\otimes k}\right)\right) \right),
	\end{align*}
	where 
	$$m(a_1\otimes\dots\otimes a_{kn}) = a_1\dots a_{kn}. $$
	$$ \sigma((a_1\otimes A_1)\otimes\dots\otimes (a_{kn}\otimes A_{kn})) = (a_1\otimes A_{\sigma(1)})\otimes\dots\otimes (a_{kn}\otimes A_{\sigma(kn)}). $$
	Consequently, with Lemma \ref{nkytrdds},
	\begin{align*}
		&(\tau_{\A}\otimes\tau)\left( m_{\sigma}\left(Q(x)\right)^k \right) \\
		&= \sum_{K\in \mathcal{K}_{nk}}\ \sum_{z\in [1,d]^{\widetilde{K}}} \tau_{\A}\otimes\tau^{\otimes l_K} \left( m\otimes u^{\sigma,K}_z\left( \bigotimes_{p\in [1,kn]}\ \left(\bigotimes_{q\in \sigma\left(C_{\sigma^{-1}(p)}^{**}(K)\right)} \partial_{z_{\sigma^{-1}(p),\sigma^{-1}(q)}}\right) \right)\left(Q(x)^{\otimes k}\right) \right) \\
		&= \sum_{K\in \mathcal{K}_{nk}}\ \sum_{z\in [1,d]^{\widetilde{K}}} \tau_{\A}\otimes\tau^{\otimes l_K} \left( \prod_{p\in [1,kn]} u^{\sigma,K}_{p}\left( \ \left(\bigotimes_{q\in \sigma\left(C_{\sigma^{-1}(p)}^{**}(K)\right)} \partial_{z_{\sigma^{-1}(p),\sigma^{-1}(q)}}\right)\left(P_{p}\right) \right) \Delta^{p,\sigma,K}_z \right),
	\end{align*}
	where $\Delta^{p,\sigma,K}_z \in \A\otimes \CC^*(x)^{\otimes l_K}$ is an operator of norm smaller than one, and 
	$$u^{\sigma,K}_{p}:\A\otimes\CC^*(x)^{\otimes \# C_{\sigma^{-1}(p)}^{**}(K)} \to \A\otimes\CC^*(x)^{\otimes l_K}$$
	is such that 
	$$ u^{\sigma,K}_{p}(a\otimes A_1\otimes\cdots\otimes A_r) = a\otimes \id_{\CC^*(x)}^{\otimes n_1} \otimes A_1(x) \otimes \id_{\CC^*(x)}^{\otimes n_2}\otimes\cdots\otimes \id_{\CC^*(x)}^{\otimes n_r} \otimes A_r(x) \otimes \id_{\CC^*(x)}^{\otimes n_{r+1}},$$
	for given $n_1,\dots,n_{r+1}\in\N$ which depends on $\sigma,K,p$. Besides, one has that,
	$$ \sum_{i=1}^{nk} \# C_{\sigma^{-1}(i)}^{**}(K) = \sum_{i=1}^{nk} \# C_i^*(K)-1 = \widehat{K} + c_K -nk \leq 3nk,$$
	where in the last line we used Lemma \ref{ssxdcffvg} which insures that $\widehat{K} + c_K$ is smaller than $4nk$. Indeed, if $K\in \mathcal{K}_{nk}$, then with $m=\#\widehat{K}$, one can view $K$ as an element of $\K_{nk-m}$, and hence
	$$ \widehat{K} + c_K \leq m + 4(nk-m) -6 \leq 4nk.$$
	Thus, one has that
	\begin{align}
		\label{xrfgyuikkoi}
		&\left|(\tau_{\A}\otimes\tau)\left( m_{\sigma}\left(Q(x)\right)^k \right)\right| \\
		&\leq \#\mathcal{K}_{nk}\times d^{3kn}\times \sup_{K\in\K_{nk},z_{p,q}\in[1,d]} \prod_{p\in [1,kn]} \norm{\left(\bigotimes_{q\in C_{\sigma^{-1}(p)}^{**}(K)} \partial_{i_{p,q}}\right) \left(P_{p}\right)(x)} \nonumber \\
		&\leq \#\mathcal{K}_{nk}\times d^{3kn}\times \sup_{\substack{(d_{i,j})_{i\geq 0, j\in [1,s]} \\ \sum_{i,j\geq 0} i d_{i,j}\leq 3nk \\ \sum_i d_{i,j} = k}} \prod_{j\in [1,n]} \prod_{i\geq 0} \left( \sup_{p_1,\dots,p_i\in [1,d]} \norm{\partial_{p_1}\otimes\cdots\otimes \partial_{p_i}(P_j)(x) } \right)^{d_{i,j}}, \nonumber
	\end{align}
	Besides, one can find $\widehat{\sigma}\in\S_{2n}$ such that
	\begin{align*}
		(\tau_{\A}\otimes\tau)\left( \big(m_{\sigma}\left(Q(x)\right)^* m_{\sigma}\left(Q(x)\right)\big)^k \right) &= (\tau_{\A}\otimes\tau)\left( \big(m_{\widehat{\sigma}}(Q^*(x)\otimes Q(x))\big)^k \right),
	\end{align*}
	where 
	$$ \left(\sum_i (a_1\otimes A_1)\otimes\dots\otimes (a_n\otimes A_n)\right)^* = \sum_i (a_n^*\otimes A_n^*)\otimes\dots\otimes (a_1^*\otimes A_1^*). $$
	Consequently, thanks to Equation \eqref{xrfgyuikkoi} and Lemma \ref{ssxdcffvg}, with $P_{s+j}=P_j^*$,
	\begin{align*}
		&(\tau_{\A}\otimes\tau)\left( \big(m_{\widehat{\sigma}}(Q^*(x)\otimes Q(x))\big)^k \right)^{1/2k} \\
		&\leq (\#\mathcal{K}_{2nk})^{\frac{1}{2k}}\times d^{3n}\times \sup_{\substack{(d_{i,j})_{i\geq 0, j\in [1,2n]} \\ \sum_{i,j\geq 0} i d_{i,j}\leq 6nk \\ \forall j,\ \sum_i d_{i,j} = k}} \prod_{j\in [1,n]} \prod_{i\geq 0} \left( \sup_{p_1,\dots,p_i\in [1,d]} \norm{\partial_{p_1}\otimes\cdots\otimes \partial_{p_i}(P_j)(x) } \right)^{d_{i,j}/(2k)} \\
		&\leq (80e)^{n}\times d^{3n}\times \sup_{\substack{(d_{i,j})_{i\geq 0, j\in [1,2n]} \\ \sum_{i,j\geq 0} i d_{i,j}\leq 6nk \\ \forall j,\ \sum_i d_{i,j} = k}} \prod_{j\in [1,n]} \prod_{i\geq 0} \left( \sup_{p_1,\dots,p_i\in [1,d]} \norm{\partial_{p_1}\otimes\cdots\otimes \partial_{p_i}(P_j)(x) } \right)^{d_{i,j}/(2k)}
	\end{align*}
	However, if $P=\sum_M a_M M$, 
	\begin{align*}
		&\norm{\partial_{p_1}\otimes\cdots\otimes \partial_{p_i}(P^*)(x) } \\
		&= \norm{\sum_M \sum_{M^*=A_1X_{p_1}A_2\dots X_{p_i}A_{i+1}} a_M^*\otimes A_1(x)\otimes\cdots\otimes A_{i+1}(x) } \\
		&= \norm{\sum_M \sum_{M= A_{i+1}^*X_{p_i}\dots A_2^* X_{p_1} A_1^*} a_M^*\otimes A_{i+1}(x)\otimes\cdots\otimes A_1(x) } \\
		&= \norm{\left(\partial_{p_1}\otimes\cdots\otimes \partial_{p_i}(P)(x)\right)^* } \\
		&=\norm{\partial_{p_1}\otimes\cdots\otimes \partial_{p_i}(P)(x) }
	\end{align*}
	Consequently,
	\begin{align*}
		&(\tau_{\A}\otimes\tau)\left( \big(m_{\widehat{\sigma}}(Q^*(x)\otimes Q(x))\big)^k \right)^{1/2k} \\
		&\leq (80e)^{n}\times d^{3n}\times \sup_{\substack{(d_{i,j})_{i\geq 0, j\in [1,n]} \\ \sum_{i,j\geq 0} i d_{i,j}\leq 6nk \\ \sum_i d_{i,j} = 2k}} \prod_{j\in [1,n]} \prod_{i\geq 0} \left( \sup_{p_1,\dots,p_i\in [1,d]} \norm{\partial_{p_1}\otimes\cdots\otimes \partial_{p_i}(P_j)(x) } \right)^{d_{i,j}/(2k)} \\
		&\leq (80e)^{n}\times d^{3n}\times \sup_{\substack{(\alpha_{i,j})_{i\geq 0, j\in [1,n]}\in\R^+ \\ \sum_{i,j\geq 0} i \alpha_{i,j}\leq 3n \\ \forall j,\ \sum_i \alpha_{i,j} = 1}} \prod_{j\in [1,n]} \prod_{i\geq 0} \left( \sup_{p_1,\dots,p_i\in [1,d]} \norm{\partial_{p_1}\otimes\cdots\otimes \partial_{p_i}(P_j)(x) } \right)^{\alpha_{i,j}}
	\end{align*}
	Hence the conclusion since we assumed that $\tau_{\A}$ was faithful, and since $\tau$ is by construction also faithful, thanks to Lemma 4.1.8 from \cite{ozabr}, $\tau_{\A}\otimes\tau$ is also faithful, and thus 
	
	\begin{align*}
		\norm{m_{\sigma}\left(Q(x)\right)} &= \lim_{k\to \infty}\ (\tau_{\A}\otimes\tau)\left( \big(m_{\sigma}\left(Q(x)\right)^* m_{\sigma}\left(Q(x)\right)\big)^k \right)^{\frac{1}{2k}} \\
		&= \lim_{k\to \infty}\ (\tau_{\A}\otimes\tau)\left( \big(m_{\widehat{\sigma}}(Q^*(x)\otimes Q(x))\big)^k \right)^{\frac{1}{2k}} \\
		&\leq (80e)^{n}\times d^{3n}\times \sup_{\substack{(\alpha_{i,j})_{i\geq 0, j\in [1,n]}\in\R^+ \\ \sum_{i,j\geq 0} i \alpha_{i,j}\leq 3n \\ \forall j,\ \sum_i \alpha_{i,j} = 1}} \prod_{j\in [1,n]} \prod_{i\geq 0} \left( \sup_{p_1,\dots,p_i\in [1,d]} \norm{\partial_{p_1}\otimes\cdots\otimes \partial_{p_i}(P_j)(x) } \right)^{\alpha_{i,j}}.
	\end{align*}

\bibliographystyle{abbrv}

\end{document}